\newtheorem{theorem}{Theorem}[section]
\newtheorem{lemma}[theorem]{Lemma} 
\newtheorem{corollary}[theorem]{Corollary}
\newtheorem{proposition}[theorem]{Proposition}
\newtheorem{remark}[theorem]{Remark}
\title[A non-commutative Stone duality]{Boolean sets, skew Boolean algebras and a non-commutative Stone duality}
\author{Ganna Kudryavtseva}
\address{G.~Kudryavtseva, University of Ljubljana,
Faculty of Computer and Information Science,
Tr\v{z}a\v{s}ka cesta 25,
SI-1001, Ljubljana,
SLOVENIA}
\email{ganna.kudryavtseva\symbol{64}fri.uni-lj.si}
\author{Mark V. Lawson}
\address{M.~V.~Lawson, Department of Mathematics
and the
Maxwell Institute for Mathematical Sciences, 
Heriot-Watt University,
Riccarton,
Edinburgh~EH14~4AS,
SCOTLAND}
\email{markl@ma.hw.ac.uk}
\thanks{The first author was partially supported by an ARRS grant P1-0288, and the second by an EPSRC grant EP/I033203/1}
\begin{document}
\maketitle
\begin{abstract}
We describe right-hand skew Boolean algebras in terms of a class of presheaves of sets over Boolean algebras called Boolean sets, 
and prove a duality theorem between Boolean sets and \'etale spaces over Boolean spaces.
\end{abstract}

\section{Introduction}

This paper is part of an ongoing collaboration exploring the connections between our different
generalizations \cite{K,Law,Law1} of classical Stone duality.
We first find an alternative description of the skew Boolean algebras which are the focus of the first author's generalization
and then reprove the main duality theorems of \cite{K} in these new terms.
These results will form the basis of \cite{KL4}, where we shall show explicitly how our two generalizations fit together
to provide a single duality theorem.
In this introduction, we define the structures we shall be studying and state the two theorems we shall be proving.

\subsection{Classical  Stone duality}

Throughout this paper, we shall use the term {\em Boolean algebra} to mean what is usually called a  {\em generalized Boolean algebra};
that is, a relatively complemented distributive lattice with bottom element.
A Boolean algebra with a top element will be called a {\em unital Boolean algebra}.
A homomorphism $\theta \colon A \rightarrow B$ of Boolean algebras is said to be {\em proper}
if $B$ is equal to the order ideal generated by the image of $\theta$.

Let $X$ be a poset.
A subset $F$ of $X$ is called {\em down directed} provided that $a,b\in F$ 
imply that there is $c\in F$ such that $c\leq a,b$, and it is called {\em upwardly closed} provided that $a\in F$ and $b\geq a$ imply $b\in F$.
A {\em filter } is a non-empty subset that is down directed and upwardly closed.
A {\em proper filter} is a filter $F$ which is a proper subset of $X$, that is $F\neq X$.
An {\em ultrafilter} is a maximal proper filter with respect to subset inclusion.
With each Boolean algebra $B$, we may associate its set of ultrafilters $B^{\ast}$.
A proper filter $F$ in a Boolean algebra is said to be {\em prime} if $a \vee b \in F$ implies that $a \in F$ or $b \in F$.
An {\em ideal} of a Boolean algebra is a non-empty subset $I$ such that $a \in I$ and $b \leq a$ implies that $b \in I$
and $a,b \in I$ implies that $a \vee b \in I$.
An proper ideal $I$ is said to be {\em prime} if $a \wedge b \in I$ implies that $a \in I$ or $b \in I$.
The following proposition summarizes some important results we shall need later, 
whose proofs are well-known in the unital case.

\begin{proposition}\label{prop: properties_of_Boolean_algebras} 
The following hold in a Boolean algebra.

\begin{enumerate}

\item Each non-zero element is contained in an ultrafilter.

\item The ultrafilters are precisely the prime filters.

\item The proper maximal ideals are precisely the prime ideals.

\item The complement of a prime ideal is a prime filter.

\item Let $I$ be an ideal and $F$ a filter such that $I \cap F = \emptyset$.
Then there exists an ideal $I'$ which is maximal such that $I \subseteq I'$ and $I' \cap F = \emptyset$.
In addition, such an ideal $I'$ is prime.
It follows that given an ideal $I$ and a filter $F$ such that $I \cap F = \emptyset$
there is an ultrafilter $F'$ such that $F \subseteq F'$ and $I \cap F' = \emptyset$.

\item For each pair of distinct non-zero elements there is an ultrafilter that contains one and omits the other.

\end{enumerate}
\end{proposition}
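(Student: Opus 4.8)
The plan is to reduce every part to two recurring devices: the explicit description of the filter, respectively ideal, obtained by adjoining a single element to a given filter, respectively ideal, and the fact that each interval $[0,a]$ is a unital Boolean algebra, so that relative complements taken inside $[0,a]$ exist and are well behaved. Before anything else I would record the book-keeping facts. A filter $F$ is proper precisely when $0\notin F$, and the union of a chain of proper filters is again a proper filter; hence Zorn's lemma applies directly to proper filters. For ideals the picture is not symmetric: a proper ideal is merely an ideal different from $B$, and the union of a chain of proper ideals can be all of $B$ (take the principal ideals $\downarrow\! n$ in the algebra of finite subsets of $\mathbb{N}$), so Zorn will be invoked for ideals only after restricting to those disjoint from a fixed non-empty filter, where disjointness from a non-empty set survives unions and already forces properness. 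I would also verify once that
\[
\langle F,a\rangle=\{\,x:x\ge f\wedge a\text{ for some }f\in F\,\},\qquad
\langle I,a\rangle=\{\,x:x\le i\vee a\text{ for some }i\in I\,\}
\]
are, respectively, the filter and the ideal generated by $F\cup\{a\}$ and $I\cup\{a\}$.

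Given this, part (1) is immediate: $\uparrow\! a$ is a proper filter whenever $a\ne0$, and Zorn produces an ultrafilter above it. For (2), if $F$ is an ultrafilter with $a\vee b\in F$ and $a\notin F$, then $\langle F,a\rangle$ is not proper, so $f\wedge a=0$ for some $f\in F$, whence $f\wedge b=f\wedge(a\vee b)\in F$ and $b\in F$; conversely, if $F$ is prime and some proper filter $H$ properly contains $F$, choose $a\in H\setminus F$ and $b\in F$, let $\bar a$ be the complement of $a$ in $[0,a\vee b]$, note that $a\vee(b\wedge\bar a)\ge b$ lies in $F$, deduce $\bar a\in F$ by primeness, and obtain the contradiction $0\in H$ from $a,\bar a\in H$. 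Part (4) is a direct check: the complement of a prime ideal is non-empty, upward closed, down directed (since $a\wedge b$ avoids the ideal when $a$ and $b$ do), proper and prime (the ideal being closed under $\vee$).

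The computational core, used for both (3) and (5), is the distributive identity
\[
(i\vee a)\wedge(j\vee b)=(i\wedge j)\vee(i\wedge b)\vee(j\wedge a)\vee(a\wedge b),
\]
whose right-hand side lies in any ideal containing $i$, $j$ and $a\wedge b$. For (3): if $I$ is a maximal proper ideal and $a\wedge b\in I$ with $a,b\notin I$, then $\langle I,a\rangle=\langle I,b\rangle=B$, so every $c$ satisfies $c\le(i\vee a)\wedge(j\vee b)$ for suitable $i,j\in I$, forcing $c\in I$ and hence $I=B$, a contradiction; conversely, if $I$ is prime and $a\in J\setminus I$ for a proper ideal $J\supseteq I$, then writing each $c$ as $(c\wedge a)\vee(c\setminus a)$ with $c\setminus a$ denoting the complement of $c\wedge a$ within $[0,c]$ and using $(c\setminus a)\wedge a=0\in I$, primeness gives $c\setminus a\in I$, hence $c\in J$, again forcing $J=B$. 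For (5), the restricted Zorn argument yields $I'$ maximal among ideals containing $I$ and disjoint from $F$; if $I'$ were not prime, say $a\wedge b\in I'$ with $a,b\notin I'$, then $\langle I',a\rangle$ and $\langle I',b\rangle$ both meet $F$, producing $f_a,f_b\in F$ with $f_a\le i\vee a$ and $f_b\le j\vee b$; then $f_a\wedge f_b\in F$, while the identity places it in $I'$, contradicting $I'\cap F=\emptyset$. The final clause of (5) follows by taking $F'=B\setminus I'$: it is an ultrafilter by (4) and (2), contains $F$, and is disjoint from $I$. Finally, for (6), given distinct non-zero $a,b$, say $a\not\le b$; then $\uparrow\! a$ and $\downarrow\! b$ are disjoint, so (5) supplies an ultrafilter through $a$ omitting $b$ (alternatively, apply (1) to the non-zero relative complement of $a\wedge b$ in $[0,a]$).

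I expect the one genuine obstacle to be conceptual rather than technical: one must keep in mind that ``proper'' for an ideal does not mean ``omits a distinguished element'', so that a generalized Boolean algebra may possess no maximal proper ideal at all and statement (3) is a bare equivalence whose two sides can both be empty, while all the existence content is concentrated in the constrained Zorn argument of (5). Everything else is routine: checking that the generated filters and ideals are as described, and handling relative complementation carefully inside the intervals $[0,a]$.
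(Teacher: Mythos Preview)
The paper does not prove this proposition at all: it is stated as a summary of results ``whose proofs are well-known in the unital case'' and is immediately followed by the statement of Stone duality, with no intervening proof environment. So there is nothing to compare your argument against in the paper itself.

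That said, your proposal is correct and well organized for the generalized (non-unital) setting. A few minor remarks. In the prime-implies-ultrafilter direction of (2), the detour through $b\wedge\bar a$ is unnecessary: once $\bar a$ is the complement of $a$ in $[0,a\vee b]$ you already have $a\vee\bar a=a\vee b\ge b\in F$, so primeness gives $\bar a\in F$ directly. In (3), your converse is fine; the key step $(c\setminus a)\wedge a=0$ really does need the observation $(c\setminus a)\le c$, which you use implicitly. Your caveat about Zorn failing for bare proper ideals in the absence of a top element, and the consequent need to run the Zorn argument only relative to a fixed filter in (5), is exactly the point that distinguishes the generalized case from the unital one and is the reason the paper flags these results rather than simply citing a textbook.
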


By a {\em Boolean space} we shall mean a Hausdorff space with a basis of compact-open subsets.
A continuous mapping of topological spaces is called {\em proper} if the inverse images of compact sets are compact.
The most famous result about Boolean algebras is the following \cite{D,Stone}.

\begin{theorem}[Stone Duality]\label{the: stone_duality} The category of Boolean algebras and proper homomorphisms
is dually equivalent to the category of Boolean spaces and proper continuous maps.
\end{theorem}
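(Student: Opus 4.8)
The plan is to construct an explicit contravariant equivalence by hand. Define a functor $(-)^{*}$ from Boolean algebras to Boolean spaces by sending a Boolean algebra $B$ to its set of ultrafilters $B^{*}$, topologised by declaring the sets $U_{a} = \{F \in B^{*} : a \in F\}$, for $a \in B$, to be a basis. First I would check that $B^{*}$ really is a Boolean space. The identities $U_{a} \cap U_{b} = U_{a \wedge b}$ and $U_{a} \cup U_{b} = U_{a \vee b}$ (the latter using that ultrafilters are prime, Proposition~\ref{prop: properties_of_Boolean_algebras}(2)) show the $U_{a}$ form a basis closed under finite unions and intersections. The space is Hausdorff: if $a$ lies in the ultrafilter $F$ but not in the ultrafilter $G$, then by primeness some $b \in G$ satisfies $a \wedge b = 0$, so $F \in U_{a}$, $G \in U_{b}$ and $U_{a} \cap U_{b} = \emptyset$ (this is close to Proposition~\ref{prop: properties_of_Boolean_algebras}(6)). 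Each $U_{a}$ is compact, as one sees by applying Proposition~\ref{prop: properties_of_Boolean_algebras}(5) to the principal filter of elements above $a$ and the ideal generated by the elements indexing a hypothetical infinite subcover. A consequence worth recording is that \emph{every} compact-open subset of $B^{*}$ equals some $U_{a}$, since such a set is a finite union of basic opens.

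Next I would handle morphisms. A proper homomorphism $\theta \colon A \to B$ induces $\theta^{*} \colon B^{*} \to A^{*}$, $F \mapsto \theta^{-1}(F)$; the set $\theta^{-1}(F)$ is always a prime filter when nonempty, and properness of $\theta$ is exactly what forces nonemptiness, since some nonzero $b \in F$ satisfies $b \leq \theta(a)$ for some $a$, whence $a \in \theta^{-1}(F)$. Continuity follows from $(\theta^{*})^{-1}(U_{a}) = U_{\theta(a)}$, and $\theta^{*}$ is proper because every compact subset of $A^{*}$ lies inside some $U_{a}$, so its preimage is a closed subset of the compact set $U_{\theta(a)}$. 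In the opposite direction, send a Boolean space $X$ to the Boolean algebra $X^{*}$ of its compact-open subsets and a proper continuous map $f \colon X \to Y$ to $f^{*} = f^{-1} \colon Y^{*} \to X^{*}$: preimages preserve all the Boolean operations, they carry compact-open sets to compact-open sets since $f$ is continuous and proper, and $f^{*}$ is proper because $f(K)$ is compact, hence contained in a compact-open subset of $Y$, for each compact-open $K \subseteq X$.

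It then remains to produce the unit and counit and verify naturality. The unit $\eta_{B} \colon B \to (B^{*})^{*}$, $a \mapsto U_{a}$, is a homomorphism (join, meet and relative complement in $B$ going over to union, intersection and relative complement of subsets of $B^{*}$), it is injective by Proposition~\ref{prop: properties_of_Boolean_algebras}(6), and it is surjective by the recorded remark that every compact-open subset of $B^{*}$ is some $U_{a}$. The counit $\varepsilon_{X} \colon X \to (X^{*})^{*}$ sends $x$ to the ultrafilter $\{K \in X^{*} : x \in K\}$ of compact-open neighbourhoods of $x$; it is injective by the Hausdorff property, and once it is known to be a bijection it is a homeomorphism because $\varepsilon_{X}^{-1}(U_{K}) = K$ and $\varepsilon_{X}(K) = U_{K}$ for every compact-open $K$. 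Naturality of $\eta$ and $\varepsilon$ is a routine unwinding of the definitions: both composites in the square for $\theta$ send $a$ to $U_{\theta(a)}$, and both composites in the square for $f$ send $x$ to the ultrafilter $\{V : f(x) \in V\}$.

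The step I expect to be the main obstacle is the surjectivity of $\varepsilon_{X}$: given an ultrafilter $\mathcal{F}$ of the Boolean algebra of compact-open subsets of $X$, one must exhibit a point $x$ whose neighbourhood filter is exactly $\mathcal{F}$. Down-directedness of $\mathcal{F}$ gives the finite intersection property for its members, which are compact and, by Hausdorffness, closed; hence their intersection contains a point $x$, and then $\mathcal{F}$ is contained in the proper filter $\{K : x \in K\}$, so maximality of $\mathcal{F}$ gives equality. The other recurring subtlety is the non-unital bookkeeping — keeping filters nonempty, allowing $B^{*}$ to fail to be compact, and matching up the two notions of ``proper'' on the two sides — though no individual point there is hard.
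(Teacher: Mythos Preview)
Your proposal is correct and follows exactly the same approach as the paper: the paper only sketches the proof of this classical result, defining $B^{\ast}$ as the ultrafilter space with basis $M(a)$ (your $U_{a}$), defining $X^{\ast}$ as the compact-open subsets, and taking the unit $a \mapsto M(a)$ and counit $x \mapsto N(x)$ exactly as you do. Your write-up simply fills in the details the paper leaves implicit, including the non-unital bookkeeping about properness, and the compactness argument via the finite intersection property for the surjectivity of $\varepsilon_{X}$.
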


We shall now sketch out the proof of this theorem.
For each $a \in B$ define $M(a)$ to be the set of all ultrafilters containing $a$;
if $a \neq 0$ then this set is always non-empty because every non-zero element of a Boolean algebra is contained in
an ultrafilter.
Then the $M(a)$ form the basis of a topology for $B^{\ast}$ which makes $B^{\ast}$ into a Boolean space.
With each Boolean space $X$, we may associate the set $X^{\ast}$ of all compact-open subsets.
Under subset inclusion, $X^{\ast}$ is a Boolean algebra.  
The function $B \rightarrow B^{\ast \ast}$ given by $a \mapsto M(a)$ is an isomorphism of Boolean algebras.
The function $X \rightarrow X^{\ast \ast}$ given by $x \mapsto N(x)$, the set of all compact-open sets of $X$ containing $x$,
is a homeomorphism of topological spaces.

\subsection{Skew Boolean algebras}

Our reference for what follows is \cite{L3}.
A {\em right-hand skew Boolean algebra} is a triple $(B,\circ,\bullet)$ where both $(B,\circ)$ and $(B,\bullet)$ are bands
satisfying the following axioms:
\begin{enumerate}[({SB}1)]
\item\label{sb1}  $x \circ (x \bullet y) = x = (y \bullet x) \circ x$ and $x \bullet (x \circ y) = x = (y \circ x) \bullet x$.
\item\label{sb2}  $x \circ y \circ x = y \circ x$ and $x \bullet y \bullet x = x \bullet y$.
\item\label{sb3} $x \bullet y = y \bullet x$ if and only if $x \circ y = y \circ x$
\item\label{sb4} There is an element $0 \in B$ such that $0 \circ x = 0 = x \circ 0$. 
\item\label{sb5} $x^{\downarrow} = \{x \circ s \circ x \colon s \in B \}$ is a unital Boolean algebra.
\end{enumerate}
There are a number of important consequences of these axioms. 
First, from Section~3.1 of \cite{L3}, (SB2) combined with (SB5) implies that $(B,\circ)$ is right normal.
Second, $(B,\bullet,0)$ is a monoid with identity $0$.
Third, we have that
$$x \circ (y \bullet z) = (x  \circ y) \bullet (x \circ z)
\text{ and }
(y \bullet z) \circ x = (y \circ x) \bullet (z \circ x).$$
In addition, the minimum semilattice congruence $\gamma$ is the same for both $(B,\circ)$ and $(B,\bullet)$, 
and the factor-set $B/\gamma$ with respect to the operations induced by $\circ,\bullet$ and $0$ is a Boolean algebra.
The $\gamma$-classes are {\em flat} in the sense that every $\gamma$-class is a right zero semigroup with respect to $\circ$ 
and a left zero semigroup with respect to $\bullet$.
The relation $\gamma$, defined relative to $(B,\circ)$, is Green's relation $\mathscr{R}$.

\begin{remark}
{\em Our notation differs from the standard notation used in \cite{L2}, for example, 
because we use $\circ$ and $\bullet$ rather than $\wedge$ and $\vee$, respectively. 
This is to avoid ambiguity when we come to discuss meets and joins with respect to the natural partial order.}
\end{remark}

Let $(B,\circ,\bullet)$ be a right-hand skew Boolean algebra. 
Define on it a natural partial order by setting $x \leq y$ if and only if $x = x \circ y$ or, equivalently, $y = x \bullet y$.
For $x,y \in B$, we define the {\em relative complement} $x \backslash y$, of $y$ with respect to $x$,
as the relative complement of the element $y \circ x \leq x$ in the unital Boolean algebra $x^{\downarrow}$.

Let $B_1$ and $B_2$ be right-hand skew Boolean algebras. 
We call a map $\varphi \colon B_1\to B_2$ a {\em morphism of right-hand skew Boolean algebras} or just a {\em morphism} 
provided that it preserves the operations $\circ$, $\bullet$ and the zero. 
That is, we have $\varphi(x\circ y)=\varphi(x)\circ \varphi(y)$, $\varphi(x\bullet y)=\varphi(x)\bullet \varphi(y)$ 
for any $x,y\in B_1$ and $\varphi(0)=0$.  
Observe that any morphism $\varphi \colon B_1\to B_2$ induces a morphism $\overline{\varphi}:B_1/\gamma \to B_2/\gamma$ 
of Boolean algebras in a canonical way. 
We will say that $\overline{\varphi}$ {\em underlies} $\varphi$ and that $\varphi$ is {\em over} $\overline{\varphi}$.

A right-hand skew Boolean algebra is said to be a {\em right-hand skew Boolean $\wedge$-algebra} if
the meet of any two elements exists with respect to the natural partial order. 
Let $B_1,B_2$ be right-hand skew Boolean $\wedge$-algebras.  
A morphism $\varphi \colon B_1\to B_2$ will be called a {\em $\wedge$-morphisms} provided that 
$\varphi(x\wedge y)=\varphi(x)\wedge \varphi(y)$ for any $x,y\in B_1$. 
Right-hand skew Boolean $\wedge$-algebras and their $\wedge$-morphisms form a subcategory, although not a full one, 
of the category of skew Boolean algebras.

\subsection{Boolean sets}

Let $E$ be a meet semilattice equipped with the following additional data.
For each $e \in E$, let $X_{e}$ be a set
where we assume that if $e \neq f$ then $X_{e}$ and $X_{f}$ are disjoint.
If $e \geq f$ then a function $|_{f}^{e} \colon X_{e} \rightarrow X_{f}$ is given where $x \mapsto x|_{f}^{e}$.
We call these {\em restriction functions}.
In addition, $|_{e}^{e}$ is the identity on $X_{e}$ and if $e \geq f \geq g$ then 
$$(x|_{f}^{e})|_{g}^{f} = x|_{g}^{e}.$$
Put $X = \bigcup_{e \in E} X_{e}$ and define $p \colon X \rightarrow E$ by $p(x) = e$ if $x \in X_{e}$.
We shall say that $X = (X,p)$ is a {\em presheaf of sets over $E$}.
We will sometimes denote this presheaf by $X \stackrel{p}{\rightarrow} E$.
Observe that we do not assume that the sets $X_{e}$ are non-empty.
If they are all non-empty we denote the presheaf of sets by 
$X \stackrel{p}{\twoheadrightarrow} E$ and say that the presheaf has {\em global support.}

Let  $X \stackrel{p}{\rightarrow} E$ be a presheaf of sets.
Define a binary operation $\circ $ on $X$ as follows
$$x \circ y = y \vert^{p(y)}_{p(x) \wedge p(y)}.$$
It is routine to check that $(X,\circ)$ is a right normal band.
In the case where the presheaf has global support, we can also go in the opposite direction.
The following was proved in \cite{KL1}.

\begin{theorem}\label{the: presheaves_right_normal_bands}
The category of presheaves of sets with global support is equivalent to the category of right normal bands.
\end{theorem}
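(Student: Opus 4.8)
The plan is to exhibit mutually quasi-inverse functors between the two categories. The functor $F$ from presheaves of sets with global support to right normal bands is essentially the construction already given above: on objects it sends $X \stackrel{p}{\twoheadrightarrow} E$ to $(X,\circ)$, which we take for granted is a right normal band; on a morphism of presheaves of sets --- which we take to be the natural notion: a semilattice homomorphism $\theta\colon E \to E'$ together with a family of maps $\psi_{e}\colon X_{e}\to X'_{\theta(e)}$ commuting with the restriction functions, equivalently a single map $\psi\colon X\to X'$ with $p'\psi=\theta p$ respecting restrictions --- it sends $\psi$ to $\psi$ itself. That this $\psi$ is a homomorphism of bands is an immediate computation using $\theta(p(x)\wedge p(y))=\theta(p(x))\wedge\theta(p(y))$.

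For the functor $G$ in the other direction, let $S$ be a right normal band, let $\gamma$ be its minimum semilattice congruence, and set $E=S/\gamma$. Since $\gamma$ is a congruence, $S/\gamma$ is a meet semilattice and every $\gamma$-class is non-empty, so the fibres $X_{e}$ ($e\in E$) will give a presheaf with global support. To build the restriction functions we record the structural facts we need: $\gamma$ coincides with Green's relation $\mathscr{R}$ (each $\gamma$-class is a right zero semigroup, inside which all elements are $\mathscr{R}$-related, while distinct $\gamma$-classes cannot be $\mathscr{R}$-related since $S/\gamma$ is a semilattice); and, crucially, left translation depends only on the $\mathscr{R}$-class of its argument, that is, $a\,\mathscr{R}\,b$ implies $ac=bc$ for all $c$, since $bc=(ab)c=abc=bac=(ba)c=ac$, the middle equality being right normality. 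Now for $f\le e$ in $E$, $x\in X_{f}$ and $y\in X_{e}$ one has $xy\in X_{f}$ (as $[xy]=fe=f$), and by the preceding remark $xy$ is independent of the choice of $x\in X_{f}$, so we may define $y|^{e}_{f}:=xy$. The identities $|^{e}_{e}=\mathrm{id}_{X_{e}}$ and $(y|^{e}_{g})|^{g}_{f}=y|^{e}_{f}$ are then short computations (using right normality once and the lemma above once, noting that the relevant products lie in a common fibre), so we obtain a presheaf $X\stackrel{p}{\twoheadrightarrow}E$; and a band homomorphism $\varphi\colon S\to S'$ yields the semilattice homomorphism $\overline{\varphi}$ together with the maps $\varphi|_{X_{e}}$, which commute with the restrictions by construction. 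This defines $G$.

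It remains to check that $F$ and $G$ are mutually quasi-inverse. Composing $F$ then $G$: the $\gamma$-classes of $(X,\circ)$ are exactly the original fibres, since $x\circ x'=x'$ and $x'\circ x=x$ force $p(x)=p(x')$; the induced semilattice $(X,\circ)/\gamma$ is canonically isomorphic to $E$ via $X_{e}\mapsto e$; and the reconstructed restriction functions, $y|^{e}_{f}=x\circ y=y|^{e}_{f}$ for $x\in X_{f}$, are the original ones. So $G\circ F$ is naturally isomorphic to the identity. Composing $G$ then $F$: the underlying set of $F(G(S))$ is again $S$, and its multiplication is $x\star y=y|^{p(y)}_{p(x)\wedge p(y)}$, which by the definition of $G$'s restrictions equals $x'y$ for any $x'\in X_{p(x)\wedge p(y)}$; taking $x'=xy$ (which lies in $X_{p(x)\wedge p(y)}$) gives $x\star y=(xy)y=xy$, so $F(G(S))=S$ on the nose. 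Naturality on morphisms is immediate since both functors act as the identity on underlying maps. Hence $F$ and $G$ form an equivalence of categories. I expect the only real subtlety to be organisational rather than mathematical --- fixing the correct notion of morphism of presheaves of sets so that it matches arbitrary band homomorphisms, and keeping track of the canonical (but not literal) identification of $E$ with $(X,\circ)/\gamma$ in the round trip $G\circ F$. The genuinely band-theoretic input, namely that $\gamma=\mathscr{R}$ and that left translations factor through $S/\mathscr{R}$, is exactly where right normality, as opposed to mere associativity, is used.
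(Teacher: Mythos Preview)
The paper does not actually prove this theorem; it merely cites it from \cite{KL1}. Your argument is correct and is the natural one: build $F$ from the operation $x\circ y=y|^{p(y)}_{p(x)\wedge p(y)}$ already exhibited in the paper, build $G$ by taking $E=S/\gamma$ and defining restrictions via left translation, and check the two round trips. The key semigroup-theoretic inputs --- that in a right normal band $\gamma=\mathscr{R}$, that each $\gamma$-class is a right zero semigroup (from $xyx=yx$ combined with the rectangular-band structure of $\mathscr{D}$-classes), and that $a\,\mathscr{R}\,b$ forces $ac=bc$ --- are all identified and justified. Your verification that $(xy)y=xy$ recovers the original product, and that the fibres of $(X,\circ)$ are exactly the $X_e$, closes both loops.

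Two small remarks. First, your definition of presheaf morphism (a semilattice map $\theta$ plus fibrewise maps compatible with restrictions) is exactly what the paper later encodes as (BM1)--(BM2) in the Boolean case, so your choice is the right one and the functoriality checks go through as you say. Second, in the $G\circ F$ direction you correctly flag that one only gets a natural isomorphism (via $X_e\mapsto e$) rather than literal equality of base semilattices; this is the right level of care, and the naturality square for a presheaf morphism $(\theta,\psi)$ commutes because $\overline{\psi}$ and $\theta$ agree under that identification.
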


We now define two relations on presheaves of sets over semilattices and then explore some of their properties.
On $X$ define the relation $\leq$ by
\begin{equation}\label{eq:ord}
x \leq y \Leftrightarrow x = y\vert^{p(y)}_{p(x)}.
\end{equation}
This is a partial order.
Define the relation $\sim$ by 
$$x \sim y \Leftrightarrow \exists x \wedge y \mbox{ and } p(x \wedge y) = p(x) \wedge p(y)$$
and say that $x$ and $y$ are {\em compatible}.
The following was proved as Lemma~2.2 of \cite{KL1}.

\begin{lemma}\label{le: order_compatibility} Let $X \stackrel{p}{\rightarrow} E$ be a presheaf of sets.
\begin{enumerate}

\item If $x,y \leq z$ then $x \sim y$.

\item If $x \sim y$ and $p(x) \leq p(y)$ then $x \leq y$.

\item  $x \leq y$ if and only if $x = x \circ y$.

\item $x \sim y$ if and only if $x \circ y = y \circ x$.

\end{enumerate}
\end{lemma}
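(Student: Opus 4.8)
The plan is to reduce every clause to the two structural axioms of a presheaf of sets — that $|_e^e$ is the identity and that restriction satisfies the cocycle law $(x|_f^e)|_g^f = x|_g^e$ — after unwinding the definitions of $\le$, $\sim$ and $\circ$. I would start with (3), which is purely formal. If $x \le y$ then $p(x) \le p(y)$, so $p(x) \wedge p(y) = p(x)$ and hence $x \circ y = y|^{p(y)}_{p(x)\wedge p(y)} = y|^{p(y)}_{p(x)} = x$. Conversely, from $x = x \circ y = y|^{p(y)}_{p(x)\wedge p(y)}$ one reads off $p(x) = p(x) \wedge p(y)$, so $p(x) \le p(y)$, and the same equation becomes $x = y|^{p(y)}_{p(x)}$, i.e. $x \le y$.

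For (1), given $x, y \le z$, I would exhibit an explicit candidate for the meet: set $w := z|^{p(z)}_{p(x)\wedge p(y)}$, which is legitimate since $p(x) \wedge p(y) \le p(x) \le p(z)$. From $x = z|^{p(z)}_{p(x)}$ and the cocycle law, $x|^{p(x)}_{p(x)\wedge p(y)} = z|^{p(z)}_{p(x)\wedge p(y)} = w$, so $w \le x$, and symmetrically $w \le y$. If $v$ is any common lower bound of $x$ and $y$ then $p(v) \le p(x) \wedge p(y)$, and since $v \le x \le z$ transitivity of $\le$ gives $v = z|^{p(z)}_{p(v)}$; restricting $w$ further, $w|^{p(x)\wedge p(y)}_{p(v)} = z|^{p(z)}_{p(v)} = v$, so $v \le w$. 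Thus $w = x \wedge y$ exists with $p(x \wedge y) = p(x) \wedge p(y)$, which is exactly $x \sim y$.

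Clause (2) is then a short consequence: if $x \sim y$ and $p(x) \le p(y)$, put $m = x \wedge y$; then $p(m) = p(x) \wedge p(y) = p(x)$, and $m \le x$ together with $p(m) = p(x)$ forces $m = x|^{p(x)}_{p(x)} = x$ by the identity-restriction axiom, so $x = m \le y$. For (4), observe first that $x \circ y = y|^{p(y)}_{p(x)\wedge p(y)}$ and $y \circ x = x|^{p(x)}_{p(x)\wedge p(y)}$ both lie over $p(x) \wedge p(y)$. If $x \sim y$ with $m = x \wedge y$, then $m \le x$ and $p(m) = p(x) \wedge p(y)$ give $m = x|^{p(x)}_{p(x)\wedge p(y)} = y \circ x$, and symmetrically $m = x \circ y$; hence $x \circ y = y \circ x$. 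Conversely, if $x \circ y = y \circ x =: m$, the same two identities show $m \le x$ and $m \le y$, while the universal-property computation from (1) (now restricting $x$ rather than $z$) shows any common lower bound of $x$ and $y$ lies below $m$; so $m = x \wedge y$ with $p(m) = p(x) \wedge p(y)$, i.e. $x \sim y$.

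I do not anticipate a genuine obstacle. The only steps requiring a little thought are (1) and the converse half of (4), where one has to write down the correct formula for the meet and verify its universal property — in particular using transitivity of $\le$ to express an arbitrary common lower bound as a restriction of $z$ (respectively of $x$). Everything else is routine manipulation of the cocycle identity and the definitions.
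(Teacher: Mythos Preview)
Your argument is correct in every part; each step unwinds cleanly from the definitions of $\le$, $\sim$ and $\circ$ together with the cocycle law for restrictions, and the explicit constructions of the meet in (1) and in the converse of (4) are exactly what is needed. One small remark: in the converse of (4) you do not actually need the full universal property --- the lower bound $m$ with $p(m)=p(x)\wedge p(y)$ already witnesses $x\sim y$, since any common lower bound $v$ has $p(v)\le p(x)\wedge p(y)$ and $v = x|^{p(x)}_{p(v)} = m|^{p(m)}_{p(v)}$, but you have written this out anyway.

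As for comparison: the present paper does not give its own proof of this lemma --- it simply cites it as Lemma~2.2 of \cite{KL1}. So there is no in-paper argument to compare against; your self-contained verification is a perfectly adequate substitute.
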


The following is immediate by the above lemma and useful in showing two elements are equal.

\begin{corollary}\label{cor: equality} 
In a presheaf of sets, if $x,y \leq z$ and $p(x) = p(y)$ then $x = y$.
\end{corollary}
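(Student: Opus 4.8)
The plan is to derive this immediately from Lemma~\ref{le: order_compatibility}. Since $x,y \le z$, part~(1) of that lemma gives $x \sim y$; by the definition of $\sim$ this means that the meet $x \wedge y$ exists (with respect to the natural order $\le$ on $X$) and that $p(x \wedge y) = p(x) \wedge p(y)$. Using the hypothesis $p(x) = p(y)$, this becomes $p(x \wedge y) = p(x) = p(y)$.

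From here I would finish in either of two ways. The quickest uses part~(2) of the lemma: the relation $\sim$ is symmetric, and from $p(x) = p(y)$ we have both $p(x) \le p(y)$ and $p(y) \le p(x)$, so applying part~(2) to $x \sim y$ and then to $y \sim x$ yields $x \le y$ and $y \le x$; antisymmetry of $\le$ then gives $x = y$. Alternatively, one can argue directly: from $x \wedge y \le x$ and the defining formula~\eqref{eq:ord} we get $x \wedge y = x|^{p(x)}_{p(x \wedge y)} = x|^{p(x)}_{p(x)} = x$, the last equality because $|^{e}_{e}$ is the identity on $X_{e}$; symmetrically $x \wedge y = y$, and so $x = y$.

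I do not expect any real obstacle. The only point needing a moment's care is recognising that the meet appearing in the definition of $\sim$ is the meet for the intrinsic order $\le$ on the presheaf, so that Lemma~\ref{le: order_compatibility} genuinely applies; once $x \sim y$ has been secured, the conclusion is a one-line consequence of either the restriction axioms or part~(2) of that lemma.
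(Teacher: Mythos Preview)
Your proof is correct and matches the paper's approach: the paper states only that the corollary is ``immediate by the above lemma,'' and your first route---applying part~(1) of Lemma~\ref{le: order_compatibility} to obtain $x \sim y$, then part~(2) twice (using $p(x) \le p(y)$ and $p(y) \le p(x)$) together with antisymmetry---is precisely the intended argument.
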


The next result tells us that the map $p$ {\em reflects the partial order}.

\begin{lemma} Suppose that $b \leq p(x)$.
Then there exists a unique $y \leq x$ such that $p(y) = b$.
\end{lemma}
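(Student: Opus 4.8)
The plan is to produce the element $y$ explicitly, using the only piece of structure available, namely the restriction functions. Since $b \leq p(x)$, the restriction function $|_{b}^{p(x)} \colon X_{p(x)} \rightarrow X_{b}$ is defined, so I would set $y := x|_{b}^{p(x)}$. Then $p(y) = b$ is immediate from the definition of $p$, and the assertion $y \leq x$ unwinds, via \eqref{eq:ord}, to the equation $y = x|_{p(y)}^{p(x)}$, which is precisely $y = x|_{b}^{p(x)}$, true by construction. Thus existence is nothing more than reading off the definition of $\leq$.

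For uniqueness, suppose $y'$ is any element with $y' \leq x$ and $p(y') = b$. By the definition of $\leq$ this says $y' = x|_{p(y')}^{p(x)} = x|_{b}^{p(x)} = y$, so $y' = y$. No appeal to the cocycle identity $(x|_{f}^{e})|_{g}^{f} = x|_{g}^{e}$ is needed here, though one could alternatively invoke Corollary~\ref{cor: equality}, since once $y, y' \leq x$ and $p(y) = p(y')$ are known, equality follows from that corollary.

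The only point that deserves a word of care — and it is essentially the only thing to verify — is that the relation $\leq$ of \eqref{eq:ord} tacitly presupposes $p(x) \leq p(y)$, for otherwise the term $y|_{p(x)}^{p(y)}$ is not even defined; hence ``$y \leq x$'' already forces $p(y) \leq p(x)$, which is consistent with the hypothesis $b \leq p(x)$ and is what makes the statement well posed. There is no genuine obstacle: the lemma amounts to the observation that each fibre map $|_{b}^{p(x)}$ sends $x$ to its unique ``source'' lying below $x$, which is exactly the sense in which $p$ reflects the order.
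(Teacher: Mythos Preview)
Your proof is correct and follows essentially the same approach as the paper: both construct $y$ as the restriction $x|_{b}^{p(x)}$ and verify $y \leq x$, $p(y)=b$ from the definitions. The paper takes a slight detour, invoking surjectivity of $p$ to first pick some $z$ with $p(z)=b$ and then writing $y = x|_{p(z)}^{p(x)}$, which is the same element; your version is a bit more direct since the restriction map $|_{b}^{p(x)}$ is already available without needing an element over $b$. For uniqueness the paper appeals to Corollary~\ref{cor: equality}, whereas you unwind the definition of $\leq$ directly (and note the corollary as an alternative); both arguments are valid and yours is marginally more self-contained.
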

\begin{proof} Since $p$ is surjective, there exists $z \in X$ such that $p(z) = b \leq p(x)$.
Put $y = x \vert^{p(x)}_{p(z)}$.
By construction, we have that $p(y) = b$ and $y \leq x$. 
Uniqueness follows by Corollary~\ref{cor: equality}.
\end{proof}

The following result will be important to us.

\begin{lemma}\label{le: boolean_set_property} Let $B$ be a lattice and $X \stackrel{p}{\twoheadrightarrow} B$ a presheaf of sets over $B$.
Suppose  that $x\sim y$ and $\exists x\vee y$. 
Then $p(x\vee y)=p(x)\vee p(y)$.
\end{lemma}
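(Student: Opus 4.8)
The plan is to establish the two inequalities $p(x)\vee p(y)\leq p(x\vee y)$ and $p(x\vee y)\leq p(x)\vee p(y)$ in the lattice $B$ and then conclude by antisymmetry. The first is immediate: since $x\leq x\vee y$ and $y\leq x\vee y$, the very definition \eqref{eq:ord} of $\leq$ forces $p(x)\leq p(x\vee y)$ and $p(y)\leq p(x\vee y)$ (the restriction $\vert^{p(x\vee y)}_{p(x)}$ only makes sense when $p(x)\leq p(x\vee y)$), whence $p(x)\vee p(y)\leq p(x\vee y)$.

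For the reverse inequality, write $e=p(x)\vee p(y)$, which exists since $B$ is a lattice, and set $z=(x\vee y)\vert^{p(x\vee y)}_{e}$; this is well-defined because $e\leq p(x\vee y)$ by the previous paragraph, and $z\leq x\vee y$ with $p(z)=e$. The key step is to check that $z$ is again an upper bound of $\{x,y\}$. Using the composition law for restriction functions together with $p(x)\leq e\leq p(x\vee y)$, we get $z\vert^{e}_{p(x)}=\bigl((x\vee y)\vert^{p(x\vee y)}_{e}\bigr)\vert^{e}_{p(x)}=(x\vee y)\vert^{p(x\vee y)}_{p(x)}=x$, the last equality because $x\leq x\vee y$; hence $x\leq z$, and symmetrically $y\leq z$.

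Since $x\vee y$ is by hypothesis the least upper bound of $\{x,y\}$ in $(X,\leq)$, it follows that $x\vee y\leq z$. Combined with $z\leq x\vee y$ and antisymmetry of the partial order $\leq$, this gives $z=x\vee y$, so $p(x\vee y)=p(z)=e=p(x)\vee p(y)$, as required.

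I do not anticipate a serious obstacle: the only point requiring care is verifying that the candidate element $z$, obtained by restricting $x\vee y$ down to the level $p(x)\vee p(y)$, really does dominate both $x$ and $y$, which is exactly what the composition law for restrictions delivers. (Note in passing that the compatibility hypothesis $x\sim y$ is not actually needed here: it already follows from the existence of the common upper bound $x\vee y$ by Lemma~\ref{le: order_compatibility}(1).)
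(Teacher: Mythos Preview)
Your proof is correct and follows essentially the same route as the paper's: both restrict $x\vee y$ down to the level $p(x)\vee p(y)$, check that this restricted element still dominates $x$ and $y$, and then use minimality of the join. The only difference is cosmetic: where the paper invokes Lemma~\ref{le: order_compatibility} (from $t,x\leq x\vee y$ deduce $t\sim x$, then $p(t)\geq p(x)$ gives $t\geq x$), you verify $x\leq z$ directly from the composition law for restrictions, which is arguably more transparent. Your parenthetical remark that the hypothesis $x\sim y$ is superfluous is also correct; the paper's proof does not use it either.
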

\begin{proof} From $x,y \leq x \vee y$ and the fact that $p$ is order preserving, we have that $p(x),p(y) \leq p(x\vee y)$.
Thus $p(x)\vee p(y) \leq p(x \vee y)$.
It follows in particular that the restriction $t=(x\vee y)|^{p(x\vee y)}_{p(x)\vee p(y)}$ is defined.
We have $t,x\leq x\vee y$, so $t\sim x$. 
But $p(t)\geq p(x)$. 
So $t\geq x$ by Lemma~\ref{le: order_compatibility}. 
Similarly $t\geq y$. It follows that $t\geq x\vee y$. Hence $t=x\vee y$, and the statement follows.
\end{proof}

A {\em Boolean set} $X$ is a presheaf of sets $X \stackrel{p}{\twoheadrightarrow} B$ over a Boolean algebra $B$
which has a minimum element, 
usually denoted by $0$, 
with respect to the natural partial order and such that if $x \sim y$ then $\exists x \vee y$.
We also require that $p(x) = 0$ implies that $x = 0$.
It is worth stressing that Boolean sets have global support.

\begin{remark} {\em Observe that in a Boolean set, we in fact have that $x \sim y$ if and only if $\exists x \vee y$,
since by Lemma~\ref{le: order_compatibility}, if $\exists x \vee y$ then $x \sim y$.
Furthermore, by Lemma~\ref{le: boolean_set_property}, if $x \sim y$ then $p(x \vee y) = p(x) \vee p(y)$.}
\end{remark}

Let $X\stackrel{p}{\twoheadrightarrow} B_1$  and $Y\stackrel{q}{\twoheadrightarrow} B_2$ be Boolean sets. 
A {\em morphism of Boolean sets} consists of a map $\varphi \colon X\to Y$
and a morphism $\overline{\varphi} \colon B_1\to B_2$ of Boolean algebras such that the following conditions are satisfied:
\begin{enumerate}[(BM1)]
\item\label{BM1} The following diagram commutes:
$$
\xymatrix{
X\ar[d]_{p}\ar[r]^{\varphi}&Y\ar[d]^{q}\\ 
B_1\ar[r]_{\overline{\varphi}}&B_2
}
$$
That is, $\overline{\varphi}p=q\varphi$ holds.

\item\label{BM2} For any $a\geq b$ in $B_1$ the following diagram commutes:
$$
\xymatrix{
X_{a}\ar[d]_{\vert^a_{b}}\ar[r]^{\varphi}&Y_{\overline{\varphi}(a)}\ar[d]^{\vert^{\overline{\varphi}(a)}_{\overline{\varphi}(b)}}\\ 
X_{b}\ar[r]_{\varphi}&Y_{\overline{\varphi}(b)}
}
$$

That is, $\varphi(x|^a_b)=\varphi(x)|^{\overline{\varphi}(a)}_{\overline{\varphi}(b)}$ for any $x\in X_a$.
\end{enumerate}

Note that given $\varphi$, there is at most one map $\overline{\varphi}$ satisfying (BM1).
For this reason, we shall usually refer to $\varphi$ as the morphism rather than $(\varphi,\overline{\varphi})$.
Boolean sets and their morphisms form a category. 

\begin{lemma}\label{BM3}
Let $\varphi:X\to Y$ be a morphism of Boolean sets. We have $\varphi(x\vee y)=\varphi(x)\vee \varphi(y)$ for any compatible $x,y\in X$.
\end{lemma}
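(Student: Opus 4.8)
The plan is to use the characterization of joins in a Boolean set via the function $p$ together with the fact that $p$ reflects the partial order. Let $x, y \in X$ be compatible. By the remark following the definition of a Boolean set, $x \vee y$ exists, and $p(x \vee y) = p(x) \vee p(y)$ by Lemma~\ref{le: boolean_set_property}. Since $\varphi$ is a morphism and $\overline{\varphi}$ preserves the operation $\vee$ of Boolean algebras, we have $q(\varphi(x) \vee \varphi(y))$ should be $\overline{\varphi}(p(x)) \vee \overline{\varphi}(p(y)) = \overline{\varphi}(p(x) \vee p(y)) = \overline{\varphi}(p(x \vee y)) = q(\varphi(x \vee y))$, using (BM1) repeatedly; so the two candidate elements $\varphi(x \vee y)$ and $\varphi(x) \vee \varphi(y)$ at least lie over the same element of $B_2$. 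But first I must check that $\varphi(x) \vee \varphi(y)$ actually exists, i.e.\ that $\varphi(x) \sim \varphi(y)$ in $Y$.

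First I would show $\varphi$ is order preserving: if $u \leq v$ in $X$ with $p(u) = b \leq a = p(v)$, then $u = v|^a_b$, so by (BM2) $\varphi(u) = \varphi(v|^a_b) = \varphi(v)|^{\overline{\varphi}(a)}_{\overline{\varphi}(b)}$, which by the definition~\eqref{eq:ord} of the order on $Y$ (using (BM1) to identify $\overline{\varphi}(a) = q(\varphi(v))$, $\overline{\varphi}(b) = q(\varphi(u))$) gives $\varphi(u) \leq \varphi(v)$. Applying this to $x, y \leq x \vee y$ yields $\varphi(x), \varphi(y) \leq \varphi(x \vee y)$. By Lemma~\ref{le: order_compatibility}(1), having a common upper bound forces $\varphi(x) \sim \varphi(y)$, hence $\varphi(x) \vee \varphi(y)$ exists in the Boolean set $Y$.

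Now $\varphi(x) \vee \varphi(y) \leq \varphi(x \vee y)$ since $\varphi(x \vee y)$ is an upper bound of $\{\varphi(x), \varphi(y)\}$. On the other hand, the computation of the previous paragraph with Lemma~\ref{le: boolean_set_property} applied in $Y$ shows $q(\varphi(x) \vee \varphi(y)) = q(\varphi(x)) \vee q(\varphi(y)) = \overline{\varphi}(p(x)) \vee \overline{\varphi}(p(y)) = \overline{\varphi}(p(x) \vee p(y)) = \overline{\varphi}(p(x \vee y)) = q(\varphi(x \vee y))$. So $\varphi(x) \vee \varphi(y)$ and $\varphi(x \vee y)$ lie over the same element of $B_2$ and satisfy $\varphi(x) \vee \varphi(y) \leq \varphi(x \vee y)$; by Corollary~\ref{cor: equality} they are equal.

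I expect the only mildly delicate point to be the bookkeeping of identifying, via (BM1), the values of $q$ on images with values of $\overline{\varphi}$ on $p$-images, and invoking Lemma~\ref{le: boolean_set_property} in the codomain $Y$ to know $p(\varphi(x)\vee\varphi(y))$ splits as a join; everything else is a direct application of the order-reflection lemmas and Corollary~\ref{cor: equality}. There is no real obstacle — the argument is essentially forced once order-preservation of $\varphi$ is established.
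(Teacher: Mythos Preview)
Your proof is correct and follows essentially the same approach as the paper: use (BM2) to get $\varphi(x),\varphi(y)\leq\varphi(x\vee y)$, hence $\varphi(x)\vee\varphi(y)\leq\varphi(x\vee y)$, and then compare $q$-values via (BM1) to conclude equality. You are in fact more careful than the paper in explicitly verifying that $\varphi(x)\sim\varphi(y)$ so that $\varphi(x)\vee\varphi(y)$ exists in $Y$, and in invoking Corollary~\ref{cor: equality} for the final step.
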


\begin{proof} By (BM1) we have $q(\varphi(x\vee y))=\overline{\varphi}(p(x))\vee \overline{\varphi}(p(y))$. Applying (BM2) we have $\varphi(x\vee y)\geq \varphi(x),\varphi(y)$. Thus $\varphi(x\vee y)\geq \varphi(x)\vee\varphi(y)$. This and $q(\varphi(x\vee y))=q(\varphi(x)\vee\varphi(y))$ imply the needed statement.
\end{proof}

A Boolean set $X\stackrel{p}{\twoheadrightarrow} B$ is called a {\em Boolean $\wedge$-set} provided that $x \wedge y$ exists for any $x,y\in X$. 
 Let $X\stackrel{p}{\twoheadrightarrow} B_1$ and $Y\stackrel{q}{\twoheadrightarrow} B_2$ be Boolean $\wedge$-sets. 
A morphism $\varphi \colon X\to Y$ will be called a {\em $\wedge$-morphism} provided that
$\varphi(x\wedge y)=\varphi(x)\wedge \varphi(y)$ for any $x,y\in X$. 
Boolean $\wedge$-sets and their $\wedge$-morphisms form a subcategory, although not a full one, of the category of Boolean sets. 

We now have all the definitions needed to state our first theorem.

\begin{theorem}\label{the: main_theorem1}\mbox{} 
\begin{enumerate}

\item The category of Boolean sets is isomorphic to the category of right-hand skew Boolean algebras.

\item The category of Boolean $\wedge$-sets is isomorphic to the category of right-hand skew Boolean $\wedge$-algebras. 

\end{enumerate} 
\end{theorem}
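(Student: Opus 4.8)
The strategy is to construct two mutually inverse functors explicitly, first on objects and then on morphisms, checking that they respect composition and identities. Going from a right-hand skew Boolean algebra $(B, \circ, \bullet)$ to a Boolean set, I would take the indexing Boolean algebra to be $B/\gamma$, and for each $\gamma$-class $e \in B/\gamma$ let $X_e$ be that class itself (the underlying set of $B$ fibered over $B/\gamma$ via the quotient map $p = \gamma^\natural$). Since $\gamma$ coincides with Green's $\mathscr{R}$ on $(B,\circ)$, distinct classes are disjoint automatically. The restriction maps should be defined by $x|^e_f = f_0 \circ x \circ f_0$ for any representative $f_0$ of the class $f \le e$; one checks this is well-defined using right normality of $(B,\circ)$ and that it agrees with the operation $x^\downarrow \ni s \mapsto$ restriction inside the unital Boolean algebra $x^\downarrow$. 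The axioms of a presheaf (identity and composition of restrictions) follow from associativity and the band identities. Global support holds because $\gamma^\natural$ is onto, and the condition $p(x) = 0 \Rightarrow x = 0$ follows from (SB4) together with the fact that the bottom $\gamma$-class is $\{0\}$. One then verifies that the induced $\circ$ on the presheaf (the operation $x \circ y = y|^{p(y)}_{p(x)\wedge p(y)}$ from the excerpt) coincides with the original $\circ$ on $B$; this is the main bookkeeping step and reduces to $x \circ y = (x \circ y \circ x) \circ y$ type manipulations using (SB1)--(SB3).

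In the reverse direction, given a Boolean set $X \stackrel{p}{\twoheadrightarrow} B$, I would equip $X$ with $\circ$ as already defined in the excerpt and define $\bullet$ by $x \bullet y = x \vee (y|^{p(y)}_{p(y) \wedge (p(x) \vee p(y))'} )$ — more carefully, using that $x \sim (y \text{ restricted to the complement of } p(x) \text{ within } p(x) \vee p(y))$ so that the join exists by the Boolean set axiom. The element $0 \in X_0$ (forced unique) serves as the zero, and $x^\downarrow$ should turn out to be $\{y \in X : y \le x\}$, which is order-isomorphic to the principal ideal $p(x)^\downarrow$ in $B$ and hence a unital Boolean algebra; relative complements in $X_{\le x}$ are transported from $B$. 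Then one checks (SB1)--(SB5) hold. Lemma \ref{le: order_compatibility} and Lemma \ref{le: boolean_set_property} do most of the work here, e.g. (SB3) is exactly part (4) of Lemma \ref{le: order_compatibility} combined with the remark that $x \sim y \Leftrightarrow \exists x \vee y$.

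For morphisms: a morphism of skew Boolean algebras $\varphi \colon B_1 \to B_2$ induces $\overline{\varphi} \colon B_1/\gamma \to B_2/\gamma$ (noted in the excerpt), and the pair $(\varphi, \overline{\varphi})$ satisfies (BM1) by construction and (BM2) because $\varphi$ preserves $\circ$ and hence the restriction maps. Conversely a Boolean set morphism $(\varphi, \overline{\varphi})$ preserves $\circ$ by (BM1)+(BM2), preserves $\vee$ of compatibles by Lemma \ref{BM3}, and therefore preserves $\bullet$ (which was defined from $\circ$ and $\vee$) and $0$; functoriality in both directions is then formal. Finally, for part (2), I would observe that a Boolean set is a Boolean $\wedge$-set precisely when the corresponding skew Boolean algebra has binary meets in the natural order, since the natural orders correspond under the construction, and that $\wedge$-morphisms correspond on the nose.

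The main obstacle I expect is verifying that the operation $\bullet$ reconstructed on a Boolean set from $\circ$ and the joins of compatible elements genuinely satisfies all of (SB1)--(SB5), and dually that the abstractly-defined restriction maps $f_0 \circ x \circ f_0$ on $B/\gamma$ are well-defined and presheaf-compatible — in both cases the axioms interlock (e.g. (SB1) links $\circ$ and $\bullet$, (SB5) pins down relative complements), so the calculations must be orchestrated in the right order, using right normality of $(B,\circ)$ and Corollary \ref{cor: equality} repeatedly to reduce equalities of elements to equalities of their images under $p$.
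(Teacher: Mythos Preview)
Your plan matches the paper's argument closely: the paper defines $\bullet$ on a Boolean set by $x \bullet y = x \vee (y\backslash x)$ with $y\backslash x := y|^{p(y)}_{p(y)\backslash p(x)}$, checks the skew Boolean axioms through short lemmas that reduce equalities in $X$ to equalities in $B$ via Corollary~\ref{cor: equality} exactly as you anticipate, builds the reverse functor over $X/\mathscr{R}$ by invoking Theorem~\ref{the: presheaves_right_normal_bands}, treats morphisms just as you outline (Lemma~\ref{BM3} for preservation of $\bullet$), and closes the loop by proving the identity $x \bullet y = x \vee (y\backslash x)$ inside an arbitrary right-hand skew Boolean algebra; part~(2) is then immediate from the coincidence of the two natural partial orders, as you say.

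One concrete correction: your proposed restriction $x|^e_f = f_0 \circ x \circ f_0$ simplifies by (SB\ref{sb2}) to $x\circ f_0$, but whenever $p(x)\ge p(f_0)$ one has $x\circ f_0\,\mathscr{R}\,f_0$, and since $(x\circ f_0)\circ f_0 = x\circ f_0$ this forces $x\circ f_0=f_0$; thus your formula returns the chosen representative and does not depend on $x$ at all. The map you actually want is $x|^e_f = f_0 \circ x$, which is well-defined by right normality (if $f_0\,\mathscr{R}\,f_1$ then $f_0\circ x = f_1\circ f_0\circ x = f_0\circ f_1\circ x = f_1\circ x$) and satisfies $(f_0\circ x)\circ x = f_0\circ x$, so it really lands below $x$. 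The paper sidesteps writing this out by citing Theorem~\ref{the: presheaves_right_normal_bands} directly.
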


\subsection{Boolean right normal bands}
There is another way of interpreting Boolean sets which comes from Theorem \ref{the: presheaves_right_normal_bands}. Let $S$ be a right normal band. We call it {\em Boolean} provided that
$S/\gamma$ is a Boolean algebra and joins of compatible pairs of elements exist in $S$. A Boolean right normal band $S$ has (finite) meets if for any $a,b\in S$ their meet $a\wedge b$ exixts in $S$. Let $S,T$ be Boolean right normal bands and $\varphi: S\to T$ be a semigroup homomorpism. We call $\varphi$ a {\em Boolean morphism} provided that $\overline{\varphi}:S/\gamma\to T/\gamma$ is a proper morphism of Boolean algebras.  Boolean right normal bands and their Boolean morphisms form a category. Boolean right normal bands with meets and their meet-preserving Boolean morphisms also form a category that is a subcategory, although not full, of the category of Boolean right normal bands.
The following theorem can be easily deduced applying Theorem \ref{the: presheaves_right_normal_bands}.
\begin{theorem}\label{th:Bool_right_normal}\mbox{}
\begin{enumerate}

\item The category of Boolean sets is isomorphic to the category of Boolean right normal bands.
\item The category of Boolean $\wedge$-sets is isomorphic to the category of Boolean right normal bands with meets.
\end{enumerate}
\end{theorem}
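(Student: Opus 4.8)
The plan is to deduce the theorem from Theorem~\ref{the: presheaves_right_normal_bands} by restricting that equivalence to the relevant subcategories. Recall how the equivalence works: a presheaf of sets with global support $X \stackrel{p}{\twoheadrightarrow} E$ is sent to the right normal band $(X,\circ)$ with $x\circ y = y|^{p(y)}_{p(x)\wedge p(y)}$; conversely a right normal band $S$ is sent to the presheaf over $S/\gamma$ whose fibre over a $\gamma$-class is that class itself and whose restriction map over $e\geq f$ sends $x\in X_e$ to $a\circ x$ for any $a\in X_f$. Under these assignments the structure map $p$ becomes the canonical projection $S\to S/\gamma$, and semigroup homomorphisms of right normal bands correspond to maps of presheaves that commute with the structure maps and with the restriction maps. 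So the first task is to identify, under this correspondence, the presheaves underlying Boolean sets with the right normal bands that are Boolean (and likewise in the $\wedge$-case), and the second is to match up the morphisms.

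At the object level I would check the defining conditions in turn. Since $p$ becomes the projection onto $S/\gamma$, the base $B$ of a Boolean set is identified with $S/\gamma$, so the demand that $B$ be a Boolean algebra is verbatim the demand that $S/\gamma$ be one. By Lemma~\ref{le: order_compatibility}(3) the partial order~\eqref{eq:ord} on $X$ is precisely the natural partial order of the band $(X,\circ)$, and by Lemma~\ref{le: order_compatibility}(4) the compatibility relation $\sim$ is precisely the relation $x\circ y=y\circ x$, i.e.\ compatibility in the band. Hence the Boolean-set axiom that $x\sim y$ implies the existence of $x\vee y$ is exactly the requirement that joins of compatible pairs exist in $S$, the joins in question being computed in one and the same partially ordered set. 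Finally, the requirement that a Boolean set possess a minimum element with $p(x)=0\Rightarrow x=0$ says precisely that the $\gamma$-class of $S$ lying over the bottom of $S/\gamma$ is a singleton, equivalently that $S$ has a zero element. The same identification of partial orders shows that $X$ has all binary meets if and only if $S$ does, so Boolean $\wedge$-sets correspond to Boolean right normal bands with meets.

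At the morphism level, a morphism $\varphi\colon X\to Y$ of Boolean sets is, by (BM1) and (BM2), precisely a map compatible with the projections to the $\gamma$-quotients and with the restriction maps, which under the correspondence above is precisely a semigroup homomorphism $S\to T$ of the associated bands; moreover the underlying map $\overline{\varphi}$ on bases is exactly the homomorphism $S/\gamma\to T/\gamma$ induced on $\gamma$-classes, so that, modulo checking that the conditions imposed on $\overline{\varphi}$ on the two sides agree, a morphism of Boolean sets is the same thing as a Boolean morphism. Since the partial orders, and hence the meets, correspond, preservation of binary meets on one side is preservation of binary meets on the other. Identities and composites plainly match. Assembling the object-level and morphism-level statements yields the two asserted isomorphisms of categories.

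The only real work here is the bookkeeping in the middle two paragraphs: one must verify that the order-theoretic notions $\leq$, $\sim$, $\vee$ and $\wedge$ introduced combinatorially for presheaves are transported by the functors of Theorem~\ref{the: presheaves_right_normal_bands} onto the corresponding semigroup-theoretic notions on a right normal band, and that (BM1) and (BM2) together amount to ``$\varphi$ is a semigroup homomorphism inducing the prescribed map on $\gamma$-classes''. Once Lemma~\ref{le: order_compatibility} is in hand, each of these is routine, which is why the theorem follows easily from Theorem~\ref{the: presheaves_right_normal_bands}.
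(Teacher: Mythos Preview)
Your approach is precisely what the paper intends: it offers no proof beyond the remark that the result is easily deduced from Theorem~\ref{the: presheaves_right_normal_bands}, and you have spelled out that deduction by matching the extra Boolean-set axioms against the extra conditions defining a Boolean right normal band, invoking Lemma~\ref{le: order_compatibility} to transport $\leq$, $\sim$, $\vee$, $\wedge$ across the equivalence.

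The caveat you insert (``modulo checking that the conditions imposed on $\overline{\varphi}$ on the two sides agree'') is worth taking seriously rather than dismissing as routine. As written, the paper's definition of a Boolean morphism of right normal bands requires $\overline{\varphi}\colon S/\gamma\to T/\gamma$ to be a \emph{proper} morphism of Boolean algebras, whereas its definition of a morphism of Boolean sets asks only for a morphism of Boolean algebras. Similarly, the paper's definition of a Boolean right normal band does not explicitly stipulate that the $\gamma$-class over $0\in S/\gamma$ be a singleton (equivalently, that $S$ have a zero), whereas the Boolean-set axioms do impose this. These are wrinkles in the paper's own bookkeeping rather than defects in your argument, and the paper's one-line proof does not address them either; your strategy is correct and coincides with the paper's.
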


\subsection{Etal\'{e} spaces}

An {\em \'{e}tal\'{e} space} is a triple $(E,p,X)$, where $E$ and $X$ are topological spaces and $p \colon E\to X$ is a surjective local homeomorphism.
We will call $X$ the {\em base space} and will also say that the \'{e}tale space $(E,p,X)$ is {\em over} $X$. 
If $x\in X$ then the set $E_x=p^{-1}(x)$ is called the {\em stalk} over $x$.
A subset $A\subseteq E$ is called an {\em open local section} or just an {\em open section}
provided that $A$ is open and the restriction of the map $p$ to $A$ is injective. 
{\em Our spaces will always have $X$ as a Boolean space}.

If $A\subseteq E$ is an open local section, we say that it is {\em over} $p(A)$. 
If $B$ is an open set in $X$ then by $E(B)$ we denote the set of all open local sections over $B$.
In the following lemma,   
whose proof follows from the fact that $p$ is a local homeomorphism, 
we collect some easy properties of \'{e}tal\'{e} spaces needed below.

\begin{lemma}\label{le: l2} Let $(E,p,X)$ be an \'{e}tal\'{e} space.
\begin{enumerate}
\item \label{i1} If $A\subseteq E$ is an open local section then $p(A)$ is open in $X$.
\item \label{i2} An open local section $A$ in $E$ is compact if and only if $p(A)$ is compact in $X$.
\item \label{i3} Let $A$ be a compact-open set in $X$. Then the set $E(A)$ is non-empty. 
\end{enumerate} 
\end{lemma}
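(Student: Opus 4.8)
The plan is to deduce all three parts from a single observation — that a local homeomorphism restricts to a homeomorphism on any open set on which it is injective — together with the fact that $X$, being a Boolean space, is Hausdorff and has a basis of compact-open sets.

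For (\ref{i1}) I would first record that a local homeomorphism is an open map: if $A\subseteq E$ is open, cover it by open sets $U$ on which $p$ restricts to a homeomorphism onto an open subset of $X$, so that $p(A)=\bigcup_U p(A\cap U)$ is a union of open sets. In particular, if $A$ is an open local section then $p(A)$ is open. For (\ref{i2}) I would upgrade this: $p|_A$ is a continuous bijection onto $p(A)$ which, by the argument just given, sends open subsets of $A$ to open subsets of $p(A)$, hence is a homeomorphism $A\to p(A)$; since compactness is a topological invariant, $A$ is compact if and only if $p(A)$ is.

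Part (\ref{i3}) is where the real work lies. Given a compact-open $A\subseteq X$, for each $x\in A$ use surjectivity of $p$ to choose $e_x\in p^{-1}(x)$, then an open neighbourhood $U_x\ni e_x$ with $p|_{U_x}$ a homeomorphism onto an open neighbourhood of $x$; since $X$ is Boolean, shrink to a compact-open $W_x$ with $x\in W_x\subseteq A\cap p(U_x)$. By compactness of $A$, finitely many $W_{x_1},\dots,W_{x_n}$ cover $A$, and in the Boolean algebra of compact-open subsets of $X$ we may disjointify them: setting $V_i=W_{x_i}\setminus(W_{x_1}\cup\dots\cup W_{x_{i-1}})$, the $V_i$ are pairwise disjoint compact-open sets with $\bigcup_i V_i=A$ and $V_i\subseteq p(U_{x_i})$. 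Let $s_i\colon V_i\to U_{x_i}$ be the restriction to $V_i$ of the inverse of the homeomorphism $p|_{U_{x_i}}$; then $s_i(V_i)$ is an open subset of $U_{x_i}$, hence of $E$, and is a local section over $V_i$. Because the $V_i$ partition $A$, the union $A'=\bigcup_{i=1}^n s_i(V_i)$ is an open subset of $E$ on which $p$ is injective, with $p(A')=A$; thus $A'\in E(A)$ and $E(A)\neq\emptyset$.

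The main obstacle is precisely the gluing step in (\ref{i3}): a finite cover of $A$ by images of local sections does not by itself produce a single section, and it is exactly the Boolean structure of $X$ — Hausdorff with a compact-open basis — that allows the cover to be replaced by a finite clopen \emph{partition}, over whose disjoint pieces the individual inverse sections patch together without conflict.
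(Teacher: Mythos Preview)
Your proof is correct and follows essentially the same route as the paper: openness of local homeomorphisms for (\ref{i1}); the observation that $p|_A$ is a homeomorphism for (\ref{i2}) (the paper instead gives a direct covering argument, but this amounts to the same thing); and for (\ref{i3}) the identical strategy of lifting points to local sections, shrinking to compact-open neighbourhoods in $A$, extracting a finite cover by compactness, disjointifying using the Boolean structure, and gluing the resulting sections over the partition.
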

\begin{proof} (1). This follows since local homeomorphisms are open maps.

(2). If $A$ is a compact-open local section then $p(A)$ is compact since continuous maps preserve compactness.
Suppose that $A$ is an open section such that $p(A)$ is compact.
Let $A = \bigcup_{i} V_{i}$ be an open cover.
Then $p(A) = \bigcup_{i} p(V_{i})$ is an open cover.
By compactness, we may write $p(A) = \bigcup_{i=1}^{n} p(V_{i})$.
Clearly $\bigcup_{i=1}^{n} V_{i} \subseteq A$.
Let $a \in A$.
Then $p(a) \in p(V_{i})$ for some $1 \leq i \leq n$. 
Because $A$ is a local section, we must have that $a \in V_{i}$ and the result follows.

(3). For each $a \in A$ choose by surjectivity an $e \in E$ such that $p(e) = a$.
Because $p$ is a local homeomorphism, we may find an open set $V_{e}$ in $E$ such that
$p$ induces a homeomorphism from $V_{e}$ to $p(V_{e})$.
But $X$ has a basis of compact-open sets and so, in particular, we may find a compact-open set $Y_{a}$ such that
$a \in Y_{a} \subseteq p(V_{e})$.
Passing to $Y_a\cap A$, if needed, we may assume that $Y_a\subseteq A$.
Let $U_{e}$ be $p^{-1}(Y_{a}) \cap V_{e}$.
Then $U_{e}$ is a compact-open subset of $E$ that contains $e$ and is mapped bijectively by $p$ to $Y_{a}$.
It follows that $U_{e}$ is a compact-open local section containing $e$.
The $Y_{a}$ form an open cover of $A$ and so by compactness, we may find a finite subcover.
Let $a_{1}, \ldots, a_{n}$ be the elements of $A$ such that the $Y_{i} = Y_{a_{i}}$ form a cover of $A$.
If these sets were disjoint then the sets $U_{i} = U_{e_{i}}$ would be disjoint and we could simply take their union
to form a compact-open local section over $A$.
Suppose they are not disjoint.
Form the sets $Z_{1} = Y_{1}$, $Z_{2} = Y_{2} \setminus Y_{1}$, $Z_{3} = Y_{3} \setminus (Y_{1} \cup Y_{2})$, and so on.
The sets $Z_{1}, \ldots, Z_{n}$ are disjoint and their union is $A$.
In addition, they are all open since  in a Hausdorff space compact subsets are closed.
We now define the sets $B_{i}$ associated with the $Z_{i}$ where $B_{i} \subseteq U_{i}$.
Put $B = \bigcup_{i=1}^{n} B_{i}$.
Then $p(B) = A$ and $B$ is a compact-open local section.   
\end{proof}

Let $(E,p,X)$ and $(F,q,Y)$ be \'{e}tal\'{e} spaces. 
A {\em relational morphism} $\varphi \colon (E,p,X)\to (F,q,Y)$ consists of two pieces of information:
a map $\varphi \colon E\to \mathsf{P}(F)$, where $\mathsf{P}(F)$ is the power set of $F$, 
and a map $\overline{\varphi} \colon X \to Y$ such that
$\varphi(x) \subseteq F_{\bar{\phi}(p(x))}$ for each $x \in E$.
We will say that $\overline{\varphi}$ {\em underlies} $\varphi$ and that $\varphi$ is {\em over} $\overline{\varphi}$. 

A {\em relational morphism} $\varphi \colon (E,p,X)\to (F,q,Y)$ is called a {\em partial map} provided that $|\varphi(x)|\leq 1$ for each $x\in E$.
We say that  a relational morphism is {\em locally injective} if for any $x,y\in E$ such that $p(x)=p(y)$ 
we have that $\varphi(x)\cap \varphi(y) \neq \varnothing$ implies that $x = y$. 
We say that a relational morphism is {\em locally surjective} if given $y\in F$ such that $q(y)=\overline{\varphi}(e)$ 
for some $e\in X$ then there is $x\in E$ such that $p(x)=e$ and $y\in \varphi(x)$. 
A relational morphism that is both locally injective and locally surjective will be called a {\em relational covering morphism}.
We say that a relational covering morphism $\varphi: (E,p,X)\to (F,q,Y)$ is {\em continuous} 
if for every open set $A$ in $F$ its inverse image $\varphi^{-1}(A)$ is an open set in $E$. 
We say that $\varphi$ is {\em proper} if the inverse images of compact sets are compact. 

We define the category   of \'{e}tal\'{e} spaces whose objects are  \'{e}tal\'{e} spaces over Boolean spaces, 
and in this paper we refer to the latter just as  \'{e}tal\'{e} spaces, and
whose morphisms are the  proper continuous relational covering morphisms. 
The following statement is easy to verify.

\begin{lemma}\label{lem:l1}
Let $\varphi \colon (E,p,X)\to (F,q,Y)$ be a proper continuous relational covering morphism. 
Then $\overline{\varphi} \colon X\to Y$ is a proper continuous map of topological spaces.
\end{lemma}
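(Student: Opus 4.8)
The plan is to isolate one set-theoretic identity and deduce both continuity and properness of $\overline{\varphi}$ from it. The identity I would prove first is that, for every subset $A\subseteq F$,
\[
p\bigl(\varphi^{-1}(A)\bigr)=\overline{\varphi}^{-1}\bigl(q(A)\bigr),
\]
where, as in the definition of continuity of a relational morphism, $\varphi^{-1}(A)=\{x\in E:\varphi(x)\cap A\neq\varnothing\}$. The inclusion $\subseteq$ needs only the defining condition $\varphi(x)\subseteq F_{\overline{\varphi}(p(x))}$, so that any $y\in\varphi(x)$ has $q(y)=\overline{\varphi}(p(x))$. For $\supseteq$, given $a$ with $\overline{\varphi}(a)\in q(A)$, choose $y\in A$ with $q(y)=\overline{\varphi}(a)$; surjectivity of $p$ gives some $x_0$ with $p(x_0)=a$, and then local surjectivity of $\varphi$, applied to $y$ and $e=a\in X$, produces $x\in E$ with $p(x)=a$ and $y\in\varphi(x)$, so that $x\in\varphi^{-1}(A)$ and $a\in p(\varphi^{-1}(A))$.

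For continuity, let $U$ be open in $Y$. Then $q^{-1}(U)$ is open in $F$ since $q$ is continuous, and hence $\varphi^{-1}(q^{-1}(U))$ is open in $E$ since $\varphi$ is continuous. Applying the identity with $A=q^{-1}(U)$ and using $q\bigl(q^{-1}(U)\bigr)=U$ (surjectivity of $q$) yields $\overline{\varphi}^{-1}(U)=p\bigl(\varphi^{-1}(q^{-1}(U))\bigr)$, which is open because the local homeomorphism $p$ is an open map (as noted in the proof of Lemma~\ref{le: l2}).

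For properness, let $K\subseteq Y$ be compact. Since $Y$ is a Boolean space I would first cover $K$ by basic compact-open sets, take a finite subcover, and let $K'$ be its union, a compact-open set with $K\subseteq K'$. By Lemma~\ref{le: l2}(\ref{i3}) there is a compact-open local section $B\subseteq F$ with $q(B)=K'$. Being compact, $B$ has compact inverse image $\varphi^{-1}(B)$ by properness of $\varphi$, so $p\bigl(\varphi^{-1}(B)\bigr)$ is compact in $X$; by the identity this set is exactly $\overline{\varphi}^{-1}(K')$. Finally, $K$ is closed in the Hausdorff space $Y$, so $\overline{\varphi}^{-1}(K)$ is closed in $X$ by the continuity already established, and a closed subset of the compact set $\overline{\varphi}^{-1}(K')$ is compact.

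I expect the properness step to be the only real obstacle. The \'etal\'e-space machinery available --- local sections and Lemma~\ref{le: l2} --- speaks about compact-\emph{open} sets, whereas $K$ is merely compact, so the genuine content is the device of fattening $K$ to a compact-open $K'$, lifting $K'$ to a compact-open local section $B$, and invoking properness of $\varphi$ on $B$; closedness of $\overline{\varphi}^{-1}(K)$ then does the rest. The underlying identity and the continuity argument are routine once the definitions of \emph{continuous} and \emph{locally surjective} relational morphism are unwound.
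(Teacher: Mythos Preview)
The paper does not actually prove this lemma; it simply prefaces the statement with ``The following statement is easy to verify'' and moves on. Your argument is therefore not competing with anything in the paper, and it is a correct fleshing-out of the claim. The key identity $p(\varphi^{-1}(A))=\overline{\varphi}^{-1}(q(A))$ is exactly what is needed, the inclusion $\supseteq$ uses local surjectivity correctly (the auxiliary element $x_0$ you produce via surjectivity of $p$ is in fact unnecessary, since local surjectivity already hands you the required $x$), and the continuity step via openness of $p$ is clean. For properness, your device of enlarging $K$ to a compact-open $K'$, lifting $K'$ to a compact-open local section $B$ via Lemma~\ref{le: l2}(\ref{i3}) (noting that part~(\ref{i2}) guarantees $B$ is compact once $q(B)=K'$ is), and then cutting back down using closedness of $\overline{\varphi}^{-1}(K)$ is the right idea and works as written.
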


We define a category of Hausdorff \'{e}tal\'{e} spaces
whose objects are Hausdorff \'{e}tal\'{e} spaces and whose morphisms are proper continuous relational covering morphisms which are {\em partial maps}.
This category is a subcategory of the category of \'etal\'e spaces, although again not full.



We now have all the definitions needed to state our second theorem.

\begin{theorem}\label{the: main_theorem2} \mbox{}
\begin{enumerate}

\item The category of \'{e}tal\'{e} spaces over Boolean spaces is dually equivalent to the category of Boolean sets.

\item The category of Hausdorff \'{e}tal\'{e} spaces over Boolean spaces is dually equivalent to the category of Boolean sets with binary meets.

\end{enumerate}
\end{theorem}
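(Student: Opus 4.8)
The plan is to build an explicit contravariant functor $\mathcal{E}$ from Boolean sets to \'etal\'e spaces over Boolean spaces, an explicit contravariant functor $\mathcal{B}$ going back, and mutually inverse natural isomorphisms; restricting everything to the $\wedge$-subcategories then yields part~(2). For a Boolean set $X\stackrel{p}{\twoheadrightarrow}B$ I would take the base space to be the Stone space $B^{\ast}$ of ultrafilters of $B$, with its usual basic compact-open sets $M(b)=\{F:b\in F\}$. Over an ultrafilter $F$ put the stalk $X_{F}=\varinjlim_{b\in F}X_{b}$, the directed colimit taken along the restriction maps (directed because $F$ is down-directed), writing $[x]_{F}$ for the germ of $x$; global support together with the fact that a proper filter omits $0$ makes every stalk non-empty. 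Let $\mathcal{E}(X)=\bigsqcup_{F}X_{F}$ with the evident projection, topologised so that the sets $\sigma_{x}=\{[x]_{F}:p(x)\in F\}$ form a basis; then each $\sigma_{x}$ is mapped homeomorphically onto $M(p(x))$, so $\mathcal{E}(X)$ is an \'etal\'e space over a Boolean space. A morphism $\varphi\colon X\to Y$ over $\overline{\varphi}\colon B_{1}\to B_{2}$ is sent to the relational morphism $\mathcal{E}(\varphi)\colon\mathcal{E}(Y)\to\mathcal{E}(X)$ with underlying map the Stone dual $\overline{\varphi}^{\ast}$ and with $\mathcal{E}(\varphi)([y]_{G})=\{[x]_{F}:F=\overline{\varphi}^{-1}(G),\ p(x)\in F,\ [\varphi(x)]_{G}=[y]_{G}\}$; using (BM1) and (BM2) one checks the clean identity $\mathcal{E}(\varphi)^{-1}(\sigma_{x})=\sigma_{\varphi(x)}$, from which continuity, the covering properties and (with Lemma~\ref{le: l2}) properness all follow.

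In the other direction, given an \'etal\'e space $(E,p,\Omega)$ with $\Omega$ a Boolean space, let $B=\Omega^{\ast}$ be the generalized Boolean algebra of compact-open subsets and, for compact-open $A$, let $X_{A}=E(A)$ with restriction maps $s\mapsto s\cap p^{-1}(A')$; these land in $E(A')$ by Lemma~\ref{le: l2}, the presheaf has global support by Lemma~\ref{le: l2}(\ref{i3}), and the empty section over $\emptyset$ is the unique section of support $0$. The crucial point is that compatible sections have joins: the agreement locus $U=\{x\in p(s)\cap p(t):s(x)=t(x)\}$ is open in $\Omega$ by a local-homeomorphism argument, $s\wedge t=s\cap p^{-1}(U)$ is a meet whenever it is a section, $s\sim t$ holds exactly when $U=p(s)\cap p(t)$, and in that case $s\cup t$ is a compact-open local section realising $s\vee t$; so $\mathcal{B}(E,p,\Omega)$ is a Boolean set. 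A proper continuous relational covering morphism $\psi\colon(E,p,\Omega)\to(F,q,\Xi)$ goes to $\mathcal{B}(\psi)$ sending a compact-open section $t$ over $A$ to the section $\{e\in E:\psi(e)\cap t\neq\varnothing\}$ over $\overline{\psi}^{-1}(A)$; local injectivity, local surjectivity and continuity of $\psi$ make this well defined, and the underlying Boolean-algebra morphism $A\mapsto\overline{\psi}^{-1}(A)$ is proper by Lemma~\ref{lem:l1}. The natural isomorphism $X\cong\mathcal{B}\mathcal{E}(X)$ sends $x\in X_{b}$ to the section $F\mapsto[x]_{F}$ over $M(b)$; injectivity of this on each $X_{b}$ uses Proposition~\ref{prop: properties_of_Boolean_algebras} (extend to a prime ideal an ideal of $b^{\downarrow}$ witnessing $x\neq x'$), while surjectivity uses compactness of $M(b)$ and patching of compatible sections. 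The isomorphism $E\cong\mathcal{E}\mathcal{B}(E,p,\Omega)$ is classical Stone duality on the base combined with the fact that, $p$ being a local homeomorphism and $\Omega$ having a basis of compact-open sets, every point of a stalk is the germ of a compact-open local section.

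For part~(2) one identifies the two subcategories under the equivalence. If $X$ is a Boolean $\wedge$-set and $[x]_{F}\neq[x']_{F}$, then restricting $x,x'$ to a suitable $h\in F$ with $x|_{h}\wedge x'|_{h}$ of support $0$ --- whose existence rests on the meet structure and on $F$ being prime (Proposition~\ref{prop: properties_of_Boolean_algebras}) --- yields disjoint basic sections separating the two germs, so $\mathcal{E}(X)$ is Hausdorff; conversely, in a Hausdorff \'etal\'e space the agreement locus $U$ is clopen in the compact-open set $p(s)\cap p(t)$, hence compact-open in $\Omega$, so $s\wedge t=s\cap p^{-1}(U)$ always exists and $\mathcal{B}(E,p,\Omega)$ has binary meets. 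The same support-$0$ device shows that $\wedge$-morphisms of $\wedge$-sets dualise to relational covering morphisms that are partial maps, and conversely, so the equivalence of part~(1) restricts as claimed.

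The routine-but-bulky part of the argument is verifying that the two morphism assignments really land among proper continuous relational covering morphisms (resp.\ morphisms of Boolean sets), are functorial, and are mutually inverse up to the stated natural isomorphisms. The genuinely delicate points I expect to be the main obstacles are: (a) the existence of joins of compatible sections of an \'etal\'e space --- i.e.\ the openness of the agreement locus, and its clopenness in the Hausdorff case; and (b) the Hausdorffness of the \'etal\'e space attached to a $\wedge$-set, where the separating sections must be manufactured out of the meet operation and the primeness of ultrafilters. An alternative, shorter route would be to transport the duality of \cite{K} across the isomorphism of Theorem~\ref{the: main_theorem1}, but the direct construction above is the one phrased entirely in the new language.
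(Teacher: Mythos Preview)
Your plan is correct and matches the paper's proof almost step for step: the functor $\mathcal{B}$ via compact-open local sections is the paper's Proposition~\ref{prop: p1}, the identity $\mathcal{E}(\varphi)^{-1}(\sigma_{x})=\sigma_{\varphi(x)}$ is exactly how the paper handles morphisms in Lemmas~\ref{lem:l5}--\ref{lem:l6}, the unit $x\mapsto\sigma_{x}$ and counit are Propositions~\ref{prop11}--\ref{prop12}, and your agreement-locus and ``support-$0$'' arguments for part~(2) are the content of Propositions~\ref{prop13}--\ref{prop14}.

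The one genuine difference is in how the total space $\mathcal{E}(X)$ is described. You build the stalk over $F$ directly as the colimit $\varinjlim_{b\in F}X_{b}$, i.e.\ as germs. The paper instead defines the points of the \'etal\'e space to be the \emph{ultrafilters of the poset $(X,\leq)$} and then proves (Lemmas~\ref{lem:l2}--\ref{lem:l3}, Proposition~\ref{prop: ultrafilters_in_Boolean_sets}) that every such ultrafilter is precisely a germ $[a]_{F}$ over a unique ultrafilter $F$ of $B$. This identification is not logically required for the duality, but it makes the analogy with classical Stone duality exact (ultrafilters on both levels) and lets several verifications---notably the injectivity of $a\mapsto L(a)$ in Lemma~\ref{le: properties_of_topology}(3), which is your ``extend to a prime ideal'' step---be phrased uniformly in filter language. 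Your colimit formulation is the standard sheaf-theoretic one and is arguably more transparent; the paper's ultrafilter formulation buys a tighter structural statement about $(X,\leq)$ itself.
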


Given the well-known correspondence between \'{e}tal\'{e} spaces and sheaves with global support, Theorem \ref{the: main_theorem2} (1) tells us that Boolean sets correspond to sheaves with global support over Boolean spaces. Actually,  invoking the notion of a sheaf over a category, Boolean sets themselves can be looked at as sheaves with global support over Boolean algebras as poset categories. Let us make this precise. Let $B$ be a Boolean algebra and let $c\in B$. We call a subset $S\subseteq B$ a {\em covering sieve} for $c$, provided that 
\begin{enumerate}
\item $a\leq c$ for each $a\in S$, 
\item $b\leq a$ and $a\in S$ imply $b\in S$,
\item  there is $k\geq 1$ and $a_1,\dots a_k\in S$ such that $c=a_1\vee \dots\vee a_k$.
\end{enumerate}
Let $J(c)$ be the collection of all covering sieves of $c$. Then assigning to each $c\in B$ the set $J(c)$ defines on $B$ a Grothendieck topology (see Chapter 3 of \cite{MM}). Boolean sets, being special kinds of presheaves of sets over Boolean algebras,  are then precisely sheaves with global support with respect to the described Grothendieck topology.

\section{Proof of Theorem~\ref{the: main_theorem1}}

We begin by showing that every Boolean set gives rise to a right-hand skew Boolean algebra.
The motivation for our construction comes from Examples~3.6(b) of \cite{L3}.
Let $X \stackrel{p}{\twoheadrightarrow} B$ be a Boolean set.
Let $e,f \in B$.
Define 
$$e \backslash f = (e \wedge f)'$$ 
where the complement is taken inside the unital Boolean algebra $e^{\downarrow}$.
The element $e \backslash f$ is the largest element satisfying the following two properties:
$e \backslash f \leq e$ and $(e \backslash f) \wedge f = 0$.
Let $x,y \in X$. 
We make the following definitions
$$x \circ y = y \vert^{p(y)}_{p(x) \wedge p(y)}
\quad
y \backslash x = y\vert^{p(y)}_{p(y) \backslash p(x)}
\quad
x \bullet y = x \vee (y \backslash x)$$
where the last is defined since the two parts of the join are compatible.

\begin{lemma}\label{le: properties}
Let $X \stackrel{p}{\twoheadrightarrow} B$ be a Boolean set.

\begin{enumerate}

\item $(a \backslash b) \backslash c = (a \backslash b) \wedge (a \backslash c)$. 

\item $(a \vee b) \backslash c = (a \backslash c) \vee (b \backslash c)$ if $\exists a \vee b$.

\item $a \backslash (b \vee c \backslash b) = (a \backslash b) \wedge (a \backslash c)$. 

\end{enumerate}

\end{lemma}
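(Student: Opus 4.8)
The plan is to prove all three identities by reducing them to computations inside unital Boolean algebras of the form $a^{\downarrow}$, using the fact that $e \backslash f$ was characterized as the largest element $z$ with $z \le e$ and $z \wedge f = 0$, together with the distributive-lattice identities available in $B$.

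For part (1), I would observe that both $(a \backslash b) \backslash c$ and $(a \backslash b) \wedge (a \backslash c)$ lie below $a \backslash b$, hence below $a$, so by Corollary~\ref{cor: equality} it suffices to show they agree; in fact I would show directly that $(a \backslash b) \wedge (a \backslash c)$ satisfies the defining property of $(a \backslash b) \backslash c$ inside $(a \backslash b)^{\downarrow}$. Indeed $(a \backslash b) \wedge (a \backslash c) \le a \backslash b$, and meeting it with $c$ gives $(a \backslash b) \wedge \big((a \backslash c) \wedge c\big) = (a \backslash b) \wedge 0 = 0$; conversely any $z \le a \backslash b$ with $z \wedge c = 0$ satisfies $z \le a$ and $z \wedge c = 0$, so $z \le a \backslash c$, hence $z \le (a \backslash b) \wedge (a \backslash c)$. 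This pins down the element uniquely. For part (2), I would argue the same way: $(a \backslash c) \vee (b \backslash c)$ exists because both joinands lie below $a \vee b$ (so they are compatible, using Lemma~\ref{le: order_compatibility}(1) and the remark that compatible pairs have joins in a Boolean set); it lies below $a \vee b$; and meeting it with $c$ gives $\big((a \backslash c) \wedge c\big) \vee \big((b \backslash c) \wedge c\big) = 0 \vee 0 = 0$, using distributivity of $\wedge$ over the existing join $\vee$ in the ambient Boolean algebra. Maximality: if $z \le a \vee b$ and $z \wedge c = 0$, then working in $(a \vee b)^{\downarrow}$ one has $z = z \wedge (a \vee b) = (z \wedge a) \vee (z \wedge b)$ with $z \wedge a \le a \backslash c$ and $z \wedge b \le b \backslash c$, so $z \le (a \backslash c) \vee (b \backslash c)$.

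For part (3), I would first note $b \vee (c \backslash b)$ exists (the joinands are compatible: $c \backslash b \le c$ but more to the point $b$ and $c \backslash b$ both sit below $b \vee (c\backslash b)$ — actually compatibility here follows because $b \wedge (c \backslash b) = 0$, giving the join via the Boolean-set axiom), and then apply part (2) with the decomposition $a \backslash (b \vee c\backslash b) = (a \backslash b) \wedge \big(a \backslash (c \backslash b)\big)$ coming from part (1) read with $a \backslash b$ replaced appropriately — more cleanly, I would show directly that $(a \backslash b) \wedge (a \backslash c)$ is the largest $z \le a$ with $z \wedge (b \vee c \backslash b) = 0$. Meeting $(a\backslash b)\wedge(a\backslash c)$ with $b \vee (c \backslash b)$ distributes to $\big((a\backslash b)\wedge(a\backslash c)\wedge b\big) \vee \big((a\backslash b)\wedge(a\backslash c)\wedge(c\backslash b)\big)$; the first term is $0$ since $(a\backslash b)\wedge b = 0$, and the second is $0$ since $(a\backslash c)\wedge(c\backslash b) \le (a \backslash c) \wedge c = 0$. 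For maximality, if $z \le a$ and $z \wedge (b \vee c\backslash b) = 0$, then $z \wedge b = 0$ so $z \le a\backslash b$; and $z \wedge (c \backslash b) = 0$, which combined with — here is the one subtle point — $z \wedge b = 0$ forces $z \wedge c = 0$, because $c = (c \wedge b) \vee (c \backslash b)$ in $c^{\downarrow}$ (since $c \backslash b$ is the complement of $c\wedge b$ there), so $z \wedge c = (z \wedge c \wedge b)\vee(z\wedge(c\backslash b)) = 0 \vee 0 = 0$; hence $z \le a \backslash c$ and therefore $z \le (a\backslash b)\wedge(a\backslash c)$.

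The main obstacle, such as it is, is bookkeeping: every step uses distributivity and complementation inside some unital Boolean algebra $e^{\downarrow}$, and one must be careful that the joins written down actually exist in the Boolean set $X$ (which is why the compatibility observations, via the remark following the definition of Boolean set and Lemma~\ref{le: order_compatibility}, matter) and that meets of elements all lying below a common element behave as in an honest Boolean algebra. None of the three parts requires any idea beyond this reduction, so I expect the author's proof to be a short direct verification of exactly the maximality characterizations sketched above.
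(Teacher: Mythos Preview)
Your argument rests on characterizing $y \backslash x$ in $X$ as ``the largest $z \le y$ with $z \wedge x = 0$'', but that characterization is valid only in the Boolean algebra $B$, not in the Boolean set $X$. In $X$ the operation is \emph{defined} by $y \backslash x = y\vert^{p(y)}_{p(y)\backslash p(x)}$, and this need not coincide with the largest element below $y$ meeting $x$ in $0$. For a concrete failure take $B = \{0,1\}$, $X_1 = \{u,v\}$ with $u \neq v$, $X_0 = \{0\}$; then $u \wedge v = 0$ in $X$ (they have no nonzero common lower bound), yet $u \backslash v = u\vert^1_{0} = 0 \neq u$. So throughout your write-up the moves ``meet it with $c$'' and ``if $z \wedge c = 0$ then \ldots'' are not doing what you want: meets with $c$ need not exist in $X$, and when they do they need not detect whether $p(z) \wedge p(c) = 0$.

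The paper's proof avoids this entirely and is much shorter than what you sketch. For each of the three parts it simply observes that both sides lie below a common element of $X$ (namely $a$, respectively $a \vee b$, respectively $a$), and then applies $p$: since $p(y \backslash x) = p(y) \backslash p(x)$ and $p(x \vee y) = p(x) \vee p(y)$ by definition and Lemma~\ref{le: boolean_set_property}, the $p$-images of the two sides are exactly the two sides of the corresponding identity in the Boolean algebra $B$, which holds there. Corollary~\ref{cor: equality} then gives equality in $X$. Your opening sentence (reduce to $a^{\downarrow}$ and use the identities available in $B$) is precisely this idea; the execution went astray by treating $b,c$ as if they lived inside $a^{\downarrow}$ instead of passing once and for all to $p(a),p(b),p(c) \in B$ and invoking the Boolean-algebra identity there.
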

\begin{proof} (1) Observe that the righthand side is well-defined by 
Lemma~\ref{le: order_compatibility}.
In addition, $(a \backslash b) \backslash c \leq a$ and $(a \backslash b) \wedge (a \backslash c) \leq a$.
To prove that these two elements are equal we invoke Corollary~\ref{cor: equality}
using the fact that in a Boolean algebra we have 
$$(e \backslash f) \backslash g = (e \backslash f) \wedge (e \backslash g).$$

(2) Both sides are less than or equal to $a \vee b$.
To prove that these two elements are equal we invoke Corollary~\ref{cor: equality}
using the fact that in a Boolean algebra we have 
$$(e \vee f) \backslash i = (e \backslash i) \vee (f \backslash i).$$

(3) Both sides are less than or equal to $a$.
To prove that these two elements are equal we invoke Corollary~\ref{cor: equality}
using the fact that in a Boolean algebra we have 
$$e \backslash (i \vee j \backslash i) = (e \backslash i) \wedge (e \backslash j).$$
\end{proof}

\begin{lemma}
$(X,\bullet)$ is a band.
\end{lemma}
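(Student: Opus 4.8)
The statement amounts to checking that $\bullet$ is idempotent and associative. Idempotency is immediate: $x\bullet x=x\vee(x\backslash x)$, and $x\backslash x=x\vert^{p(x)}_{p(x)\backslash p(x)}$ has support $p(x)\backslash p(x)=0$, whence $x\backslash x=0$ because in a Boolean set $p(z)=0$ forces $z=0$; therefore $x\bullet x=x\vee 0=x$.

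For associativity I plan to argue by supports together with a domination argument. Put $a=p(x)$, $b=p(y)$, $c=p(z)$ and carry out all Boolean-algebra arithmetic inside the unital Boolean algebra $(a\vee b\vee c)^{\downarrow}$, where ordinary complements are available. Using $e\vee(f\backslash e)=e\vee f$ one obtains $p(x\bullet y)=a\vee b$ and hence
\[
p\bigl((x\bullet y)\bullet z\bigr)=a\vee b\vee c=a\vee(b\vee c)=p\bigl(x\bullet(y\bullet z)\bigr).
\]
Expanding the left bracketing via $u\bullet v=u\vee(v\backslash u)$, and using that a finite pairwise compatible family has a bracketing-independent join (here the supports $a$, $b\backslash a$, $c\backslash(a\vee b)$ are pairwise disjoint, so $X$ being a Boolean set supplies all the joins), gives
\[
(x\bullet y)\bullet z \;=\; x\vee(y\backslash x)\vee\bigl(z\backslash(x\bullet y)\bigr)\;=:\;w ,
\]
with $p(w)=a\vee b\vee c$. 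Since $p\bigl(x\bullet(y\bullet z)\bigr)=p(w)$, it follows from \eqref{eq:ord} that it now suffices to prove $x\bullet(y\bullet z)\geq w$.

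For this last step I will use two elementary consequences of the presheaf axioms and \eqref{eq:ord}: first, $u\bullet v\geq u$ and $u\bullet v\geq v\backslash u$ hold for all $u,v$; second, if $u\geq v$ and $d\leq p(u)$ then $u\vert^{p(u)}_{d}\geq v\vert^{p(v)}_{d\wedge p(v)}$ (compose restrictions, then use that a restriction lies below the element). Since $x\bullet(y\bullet z)\geq x$ and $x\bullet(y\bullet z)\geq(y\bullet z)\backslash x$, it is enough to show $(y\bullet z)\backslash x\geq y\backslash x$ and $(y\bullet z)\backslash x\geq z\backslash(x\bullet y)$; applying the second fact to $y\bullet z\geq y$ and to $y\bullet z\geq z\backslash y$, with $d=(b\vee c)\backslash a$, reduces these to the identities $\bigl((b\vee c)\backslash a\bigr)\wedge b=b\backslash a$ and $\bigl((b\vee c)\backslash a\bigr)\wedge(c\backslash b)=c\backslash(a\vee b)$, which are immediate in $(a\vee b\vee c)^{\downarrow}$. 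Hence $x\bullet(y\bullet z)\geq w$, and so $x\bullet(y\bullet z)=w=(x\bullet y)\bullet z$. The only genuine difficulty is bookkeeping: tracking the supports, recording precisely which joins are guaranteed to exist (everything in sight is pairwise compatible because the relevant supports are disjoint), and — crucially — doing the lattice arithmetic inside one unital Boolean algebra rather than trying to manipulate relative complements globally.
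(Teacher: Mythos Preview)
Your argument is correct. Idempotency is handled cleanly, and for associativity your support-plus-domination strategy goes through: the three pieces $x$, $y\backslash x$, $z\backslash(x\bullet y)$ have pairwise disjoint supports, so all required joins exist and associate; the restriction-monotonicity fact you state is an immediate consequence of the presheaf axioms; and the two Boolean identities you reduce to are straightforward in $(a\vee b\vee c)^{\downarrow}$. The final step, deducing equality from $w\leq x\bullet(y\bullet z)$ together with $p(w)=p\bigl(x\bullet(y\bullet z)\bigr)$, is exactly the content of \eqref{eq:ord}.

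The paper proceeds differently. It first isolates three identities for the difference operation on a Boolean set (Lemma~\ref{le: properties}):
\[
(a\backslash b)\backslash c=(a\backslash b)\wedge(a\backslash c),\qquad
(a\vee b)\backslash c=(a\backslash c)\vee(b\backslash c),\qquad
a\backslash(b\vee c\backslash b)=(a\backslash b)\wedge(a\backslash c),
\]
each proved by observing that both sides lie below a common element and have equal support (Corollary~\ref{cor: equality}). It then expands both bracketings symbolically and uses these identities to reduce each to the common normal form $x\vee(y\backslash x)\vee\bigl((z\backslash x)\wedge(z\backslash y)\bigr)$. So the paper's route is purely algebraic rewriting, while yours is an asymmetric order argument (expand one side, dominate it from the other). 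The paper's approach has the advantage of isolating reusable identities; yours avoids that auxiliary lemma entirely and makes the role of the underlying Boolean-algebra arithmetic more transparent, at the cost of the bookkeeping you flag at the end.
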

\begin{proof} We have that
$$(x \bullet y) \bullet z = x \vee (y \backslash x) \vee z \backslash (x \vee y \backslash x).$$
Thus using Lemma~\ref{le: properties}(3), we have that 
$$(x \bullet y) \bullet z = x \vee (y \backslash x) \vee (z \backslash x \wedge z \backslash y).$$
On the other hand,
$$x \bullet (y \bullet z) = x \vee (y \vee z \backslash y)\backslash x.$$
Using Lemma~\ref{le: properties}(2) and (1), we again obtain 
$$x \vee (y \backslash x) \vee (z \backslash x \wedge z \backslash y).$$
\end{proof}

\begin{lemma}\label{le: compatibility} The following are equivalent:
\begin{enumerate}

\item $x \sim y$.

\item $x \circ y = y \circ x$.

\item $x \bullet y = y \bullet x$.

\end{enumerate}
\end{lemma}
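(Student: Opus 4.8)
The plan is to establish the cycle of implications $(1) \Rightarrow (2) \Rightarrow (3) \Rightarrow (1)$, leaning on the structural facts already assembled in the excerpt. The equivalence $(1) \Leftrightarrow (2)$ is already available: it is exactly Lemma~\ref{le: order_compatibility}(4), applied to the right normal band $(X,\circ)$ obtained from the presheaf structure. So the real content is to tie in the operation $\bullet$, and for that I would work directly from the definitions $x \bullet y = x \vee (y \backslash x)$ and $y \backslash x = y|^{p(y)}_{p(y) \backslash p(x)}$.

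First I would prove $(1) \Rightarrow (3)$. Assume $x \sim y$. By Lemma~\ref{le: boolean_set_property} (and the remark following the definition of a Boolean set), $p(x \vee y) = p(x) \vee p(y)$, and similarly all the joins appearing below are governed by the corresponding joins in $B$. The strategy is to show that $x \bullet y$ and $y \bullet x$ are both $\leq$ the common upper bound $x \vee y$ and that $p(x \bullet y) = p(y \bullet x)$; then Corollary~\ref{cor: equality} forces equality. For the support computation: $p(x \bullet y) = p(x) \vee (p(y) \backslash p(x))$, which in a Boolean algebra equals $p(x) \vee p(y)$, and symmetrically $p(y \bullet x) = p(y) \vee p(x)$ — so the supports agree. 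For the upper-bound claim, $x \leq x \vee y$ is clear, and $y \backslash x = y|^{p(y)}_{p(y)\backslash p(x)} \leq y \leq x \vee y$, so $x \bullet y = x \vee (y \backslash x) \leq x \vee y$; the argument for $y \bullet x$ is symmetric. Here I must be slightly careful that the joins in question actually exist — but $x \sim y$ is precisely what guarantees $\exists x \vee y$ in a Boolean set, and the summands in $x \bullet y$ are compatible by construction (as noted just before Lemma~\ref{le: properties}).

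Next, $(3) \Rightarrow (1)$. Suppose $x \bullet y = y \bullet x$. Taking $p$ of both sides and using $p(x \bullet y) = p(x) \vee p(y)$ again, we learn nothing new about supports, so instead I would extract compatibility from the equality of elements. The cleanest route: from $x \bullet y = y \bullet x$ and $x \leq x \bullet y$ we get $x \leq y \bullet x = y \vee (x \backslash y)$; combined with the dual inequality $y \leq x \bullet y$, one checks that $x$ and $y$ have a common upper bound in $X$, namely $x \bullet y$ itself, whence $x \sim y$ by Lemma~\ref{le: order_compatibility}(1). Actually the slickest version is simply: $x \leq x \bullet y$ and $y \leq x \bullet y = y \bullet x$ are both always true (from $x \leq x \vee (y\backslash x)$ and $y \le y \bullet x$), so $x, y \leq x \bullet y$ regardless — wait, that would prove $x \sim y$ unconditionally, which is false, so this needs care. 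The resolution is that $y \leq x \bullet y$ is \emph{not} automatic; what is automatic is $y \backslash x \leq x \bullet y$ and $x \leq x \bullet y$. So the genuine step is: from $x \bullet y = y \bullet x$, deduce $y = y \vee (x \backslash y) \cdot(\text{something}) \leq x \bullet y$, i.e. $y \leq y \bullet x = x \bullet y$ using $y \leq y \bullet x$ which \emph{is} automatic. Then $x, y \leq x \bullet y$ gives $x \sim y$.

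I expect the main obstacle to be exactly this last point: correctly identifying which inequalities among $x, y, x\backslash y, y\backslash x$ and the various joins hold unconditionally versus only under compatibility, so that the implication $(3) \Rightarrow (1)$ is not accidentally proved as an unconditional (and false) statement. Getting this right amounts to carefully unwinding $x \bullet y = x \vee (y \backslash x)$: the element $y$ itself sits below $x \bullet y$ precisely when $y \backslash x$ and $x$ "cover" $y$, which is a compatibility phenomenon. I would handle it by computing supports via Lemma~\ref{le: boolean_set_property} and Lemma~\ref{le: properties}, and then invoking Corollary~\ref{cor: equality} to pin down the elements, rather than trying to manipulate joins of elements directly. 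The rest is routine given Lemma~\ref{le: order_compatibility} and Lemma~\ref{le: properties}.
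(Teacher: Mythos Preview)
Your argument is correct, and after the exploratory detour your $(3)\Rightarrow(1)$ lands exactly on the paper's proof: from $x\bullet y = y\bullet x$, use the automatic inequalities $x \leq x\bullet y$ and $y \leq y\bullet x$ to conclude $x,y \leq x\bullet y$, then invoke Lemma~\ref{le: order_compatibility}(1). The equivalence $(1)\Leftrightarrow(2)$ is also handled identically.

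The only genuine difference is in $(1)\Rightarrow(3)$. The paper writes down the symmetric decomposition
\[
x \bullet y = (x \backslash y) \vee (x \wedge y) \vee (y \backslash x) = y \bullet x,
\]
which makes the equality visible by inspection once $x\wedge y$ exists with the right support. Your route instead bounds both $x\bullet y$ and $y\bullet x$ above by $x\vee y$, checks that both have support $p(x)\vee p(y)$, and then appeals to Corollary~\ref{cor: equality}. Both are short and both work; your version has the mild bonus of showing $x\bullet y = x\vee y$ along the way, while the paper's version exhibits the ``disjoint-pieces'' structure of $\bullet$ more explicitly. Neither is materially simpler than the other.
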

\begin{proof} (1)$\Leftrightarrow$(2). By Lemma~\ref{le: order_compatibility}.

(1)$\Rightarrow$(3). From $x \sim y$ we have by assumption that $\exists x \vee y$.
It follows that
$$x \bullet y = (x \backslash y) \vee (x \wedge y) \vee (y \backslash x) = y \bullet x,$$
as required.

(3)$\Rightarrow$(1)
Suppose that $x \bullet y = y \bullet x = z$.
Then $x,y \leq z$.
By Lemma~\ref{le: order_compatibility} we have that $x \sim y$.
\end{proof}

\begin{lemma} \mbox{}
\begin{enumerate}

\item $x = x \circ (x \bullet y)$.

\item $x = (y \bullet x) \circ x$.

\item $x = x \bullet (x \circ y)$.

\item $x = (y \circ x) \bullet x$.

\end{enumerate}
\end{lemma}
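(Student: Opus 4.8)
The plan is to verify the four identities (SB1) directly from the defining formulas
$$x \circ y = y|^{p(y)}_{p(x) \wedge p(y)}, \quad y \backslash x = y|^{p(y)}_{p(y) \backslash p(x)}, \quad x \bullet y = x \vee (y \backslash x),$$
using the order-reflection lemma and Corollary~\ref{cor: equality} as the workhorses. The general strategy for each identity will be the same: show that the left-hand side and the right-hand side both lie below a common element of $X$, compute their $p$-images and check these agree in $B$, and then invoke Corollary~\ref{cor: equality}. The point is that every term occurring is obtained from $x$ (or $y$) by a restriction, so it sits below $x$ (or $y$) in the natural partial order, and the identities reduce to elementary facts about relative complements in the Boolean algebra $B$.

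First I would treat (1), $x = x \circ (x \bullet y)$. Write $z = x \bullet y = x \vee (y \backslash x)$. By Lemma~\ref{le: boolean_set_property} (and the remark following the definition of a Boolean set) we have $p(z) = p(x) \vee (p(y) \backslash p(x)) = p(x) \vee p(y)$, so $p(x) \leq p(z)$, hence $p(x) \wedge p(z) = p(x)$. Therefore $x \circ z = z|^{p(z)}_{p(x)}$, which lies below $z$ and has $p$-image $p(x)$. On the other hand $x \leq z$ and $p(x) = p(x)$, so by Corollary~\ref{cor: equality} we conclude $x \circ z = x$. Identity (3), $x = x \bullet (x \circ y)$, is similar but uses a different observation: set $w = x \circ y = y|^{p(y)}_{p(x)\wedge p(y)}$, so $p(w) = p(x) \wedge p(y) \leq p(x)$, and $w \leq y$. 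Then $w \backslash x = w|^{p(w)}_{p(w) \backslash p(x)}$; but $p(w) \leq p(x)$ forces $p(w) \backslash p(x) = 0$, so $w \backslash x = 0$ (using that $p(t) = 0 \Rightarrow t = 0$ in a Boolean set), whence $x \bullet (x \circ y) = x \vee 0 = x$.

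Identity (2), $x = (y \bullet x) \circ x$, and identity (4), $x = (y \circ x) \bullet x$, require comparing elements obtained by restricting $y$ or $x$ along the Boolean-algebra level, and here the computation is a little more involved. For (2), put $z = y \bullet x = y \vee (x \backslash y)$, so $p(z) = p(y) \vee p(x)$, and then $(y \bullet x) \circ x = x|^{p(x)}_{p(z) \wedge p(x)} = x|^{p(x)}_{p(x)} = x$ directly from the $\circ$-formula, since $p(x) \leq p(z)$; so (2) is in fact immediate and does not even need Corollary~\ref{cor: equality}. For (4), set $w = y \circ x = x|^{p(x)}_{p(y) \wedge p(x)}$, so $p(w) = p(y)\wedge p(x) \leq p(x)$ and $w \leq x$; then $(y \circ x) \bullet x = w \vee (x \backslash w) = w \vee x|^{p(x)}_{p(x) \backslash p(w)}$, and both $w$ and $x|^{p(x)}_{p(x)\backslash p(w)}$ lie below $x$ with $p$-images $p(w)$ and $p(x) \backslash p(w)$ respectively. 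Since $p(w) \vee (p(x) \backslash p(w)) = p(x)$ in $B$ (as $p(w) \leq p(x)$), and the two pieces are compatible (being below $x$, by Lemma~\ref{le: order_compatibility}), Lemma~\ref{le: boolean_set_property} gives $p(w \vee x\backslash w) = p(x)$; as this join also lies below $x$, Corollary~\ref{cor: equality} forces it to equal $x$.

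The main obstacle, such as it is, will be bookkeeping: keeping straight which restrictions are being composed, and making sure that at each stage the relevant joins and meets are actually defined. This is handled uniformly by the observations that (a) $p$ is order-preserving and reflects the order, (b) all terms in sight are $\leq x$ or $\leq y$ and hence pairwise compatible by Lemma~\ref{le: order_compatibility}(1), so all required joins exist in the Boolean set, and (c) Lemma~\ref{le: boolean_set_property} computes the $p$-image of any such join. With these in hand each of the four identities collapses to a one-line check in $B$ followed by an appeal to Corollary~\ref{cor: equality}.
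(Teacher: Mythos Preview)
Your proposal is correct and follows essentially the same approach as the paper's own proof: each identity is reduced, via the definitions of $\circ$, $\bullet$, and $\backslash$, to a check at the level of $p$-images in $B$ together with an appeal to the order-reflection property (Corollary~\ref{cor: equality} or the definition of $\leq$). Your treatment of the four cases matches the paper's almost line for line, with the only difference being that you spell out (4) in more detail, explicitly noting that both pieces lie below $x$ and invoking Lemma~\ref{le: boolean_set_property} and Corollary~\ref{cor: equality}, whereas the paper compresses this to the single observation $x = x|^{p(x)}_{p(x)\wedge p(y)} \vee x|^{p(x)}_{p(x)\backslash p(y\circ x)}$.
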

\begin{proof} (1) By definition  
$x \circ (x \bullet y) = (x \vee (y \backslash x))\vert^{p(x) \vee p(y \backslash x)}_{p(x)}$.
But $x \leq x \vee (y \backslash x)$.
Thus $x = x \circ (x \bullet y)$ by Lemma~\ref{le: order_compatibility}. 

(2) $(y \bullet x) \circ x = x \vert^{p(x)}_{p(x) \wedge p(y \bullet x)}$.
But $p(x) \wedge p(y \bullet x) = p(x)$ 
and so  $x = (y \bullet x) \circ x$.

(3) This equality follows from the fact that $(x \circ y) \backslash x  = 0$.

(4) This equality follows from the fact that $x = x \vert^{p(x)}_{p(x) \wedge p(y)} \vee x \vert^{p(x)}_{p(x) \backslash p(y \circ x)}$.

\end{proof}

We have now proved the following.

\begin{proposition}\label{prop: sets_to_skew} Let $X \stackrel{p}{\twoheadrightarrow} B$ be a Boolean set.
Then $(X,\circ,\bullet)$ is a right-hand skew Boolean algebra.
\end{proposition}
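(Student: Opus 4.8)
The plan is to verify the five axioms (SB1)--(SB5) in turn, leaning heavily on the lemmas already established. Most of the work has in fact been done: the preceding lemmas show that $(X,\circ)$ is a right normal band and $(X,\bullet)$ is a band, that the compatibility relation $\sim$ coincides with commutativity in either operation, and that the four absorption identities of (SB1) hold. So the bulk of the remaining argument is to assemble these pieces and dispatch (SB2)--(SB5).

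First, (SB1) is exactly the content of the last lemma above, so nothing more is needed. For (SB3), I would observe that $x \bullet y = y \bullet x \Leftrightarrow x \sim y \Leftrightarrow x \circ y = y \circ x$ is precisely Lemma~\ref{le: compatibility}. For (SB4), the zero element is the minimum $0$ of the Boolean set: since $p(0) = 0$ in $B$, we get $x \circ 0 = 0\vert^{0}_{p(x)\wedge 0} = 0\vert^{0}_{0} = 0$, and $0 \circ x = x\vert^{p(x)}_{0\wedge p(x)} = x\vert^{p(x)}_{0}$, which is the unique element below $x$ with $p$-image $0$, hence equals $0$ by the requirement that $p(z)=0$ forces $z = 0$. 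For the first identity of (SB2), $x \circ y \circ x$, I would compute directly: $y \circ x = x\vert^{p(x)}_{p(x)\wedge p(y)}$, and then $x \circ (y\circ x)$ restricts $y \circ x$ to $p(x) \wedge p(y\circ x) = p(x) \wedge p(y)$, which is the $p$-image of $y\circ x$ already, so $x \circ y \circ x = y \circ x$. The identity $x \bullet y \bullet x = x \bullet y$ follows similarly, either by a direct computation unwinding $x\bullet y = x \vee (y\backslash x)$ and using Lemma~\ref{le: properties}, or by noting $x \leq x\bullet y$ so that $(x\bullet y)\bullet x = (x \bullet y) \vee (x \backslash (x\bullet y)) = (x\bullet y)\vee 0 = x\bullet y$.

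The one axiom requiring a little genuine argument is (SB5): for fixed $x$, the set $x^{\downarrow} = \{x \circ s \circ x : s \in X\}$ must be a unital Boolean algebra. Here I would first identify $x^{\downarrow}$ with the principal order ideal $\{y \in X : y \leq x\}$. Indeed, using the already-proved (SB2), $x \circ s \circ x = s \circ x = x\vert^{p(x)}_{p(x)\wedge p(s)} \leq x$; conversely if $y \leq x$ then $y = y\circ x = x\circ y \circ x$ (the first equality by Lemma~\ref{le: order_compatibility}(3), the second by (SB2)), so $y \in x^{\downarrow}$. Now the map $p$ restricts to a bijection from $\{y : y \leq x\}$ onto the principal ideal $p(x)^{\downarrow}$ of $B$: it is onto because $p$ reflects the partial order (the lemma preceding Lemma~\ref{le: boolean_set_property}), and injective by Corollary~\ref{cor: equality}. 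One then checks that this bijection transports the Boolean algebra structure of $p(x)^{\downarrow}$ — which is a unital Boolean algebra with top $p(x)$ since $B$ is a (generalized) Boolean algebra — back to $x^{\downarrow}$: meets exist because $y,z \leq x$ gives $y \sim z$ by Lemma~\ref{le: order_compatibility}(1), whence $y \vee z$ exists by the Boolean-set axiom with $p(y\vee z) = p(y)\vee p(z)$, and the meet is obtained as the restriction of $x$ to $p(y)\wedge p(z)$; complementation of $y$ relative to $x$ is the restriction $x\vert^{p(x)}_{p(x)\backslash p(y)}$, already named $x \backslash y$ above, and the defining properties $x\backslash y \leq x$, $(x\backslash y)\wedge y = 0$ follow from the corresponding facts in $B$ together with injectivity of $p$ on $x^{\downarrow}$. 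The top element is $x$ itself. Since $p$ also preserves the operations on the nose by construction, this is a Boolean-algebra isomorphism, so $x^{\downarrow}$ is a unital Boolean algebra.

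The main obstacle, such as it is, is bookkeeping in (SB5): one must be careful to check that the restriction maps genuinely implement meets and relative complements and not merely order-isomorphisms, and to confirm that all the relevant joins exist (which is where the Boolean-set hypotheses — global support, existence of joins of compatible pairs, and $p(z)=0 \Rightarrow z=0$ — are all used). The remaining axioms are routine one-line verifications given the lemmas, and I would present them compactly rather than belabour the computations.
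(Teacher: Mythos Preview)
Your proposal is correct and follows essentially the same approach as the paper: the paper simply writes ``We have now proved the following'' after the four preparatory lemmas, treating (SB2) for $\bullet$, (SB4), and (SB5) as routine verifications not worth spelling out. Your write-up is in fact more complete than the paper's own treatment, particularly in its explicit handling of (SB5) via the order-isomorphism $p\colon x^{\downarrow}\to p(x)^{\downarrow}$, which the paper leaves entirely implicit.
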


Note that the natural partial order on $(X,\circ,\bullet)$ coincides with the partial order on $X$ given by \eqref{eq:ord}.

The construction in the opposite direction is easier to prove.

\begin{proposition}\label{prop: skew_to_sets} Let $(X,\circ,\bullet)$ be a right-hand skew Boolean algebra.
Then $X$ is a Boolean set.
\end{proposition}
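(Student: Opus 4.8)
The plan is to reconstruct the full presheaf datum from the skew Boolean algebra $(X,\circ,\bullet)$ and verify that it satisfies all the axioms defining a Boolean set. The natural candidate for the base is $B = X/\gamma$, which by the facts recalled in the introduction is a Boolean algebra, with $p \colon X \to X/\gamma$ the quotient map. For each $e \in X/\gamma$ we set $X_e = p^{-1}(e)$, so the stalks are automatically disjoint and cover $X$; since $\gamma = \mathscr{R}$ and every $\mathscr{R}$-class is nonempty, the presheaf has global support. The bottom class is $\{0\}$, giving the required minimum element with $p(x) = 0 \Rightarrow x = 0$.

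Next I would define the restriction maps. Given $e \geq f$ in $X/\gamma$ and $x \in X_e$, pick any $t \in X_f$ and set $x|_f^e = t \circ x$ — equivalently $x|_f^e = x \circ t'$ for a suitable representative, but the clean formulation is: $x|_f^e$ is the unique element of $X_f$ that lies below $x$ in the natural partial order. Here one must use that $(X,\circ)$ is right normal, so $t \circ x$ depends only on the $\gamma$-class of $t$ (right normality means $a \circ b \circ c = a \circ c \circ b$, which forces $\mathscr{R}$-equivalent elements to act identically on the right), hence $x|_f^e$ is well-defined independently of the choice of $t$. The functoriality identities $x|_e^e = x$ and $(x|_f^e)|_g^f = x|_g^e$ then follow from associativity of $\circ$ and the band axiom. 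Crucially, one checks that $x \circ y = y|^{p(y)}_{p(x) \wedge p(y)}$ recovers the presheaf operation $\circ$ from the definition in the Boolean sets section, so the two structures agree; this uses that $p(x \circ y) = p(x) \wedge p(y)$ because $\gamma$ is a semilattice congruence and $x \circ y \leq y$.

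It then remains to verify the two defining closure properties of a Boolean set: that the natural partial order from \eqref{eq:ord} agrees with the skew-Boolean natural partial order (so that ``$x \leq y \Leftrightarrow x = x \circ y$'' holds, which is Lemma~\ref{le: order_compatibility}(3) together with the identification of $\circ$ above), and that $x \sim y$ implies $\exists\, x \vee y$ in $X$. For the latter, I would take $z = x \bullet (y \backslash x)$ — more precisely, I expect the join of two compatible elements to be realized by an explicit skew-Boolean term in $x$ and $y$, built from $\bullet$ and relative complement, mirroring the formula $x \bullet y = x \vee (y \backslash x)$ used in the forward direction; compatibility $x \sim y$ translates via Lemma~\ref{le: compatibility} into $x \bullet y = y \bullet x$, and one shows this common value is the least upper bound of $x$ and $y$. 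Finally, Lemma~\ref{le: boolean_set_property} is not needed as a hypothesis but its conclusion $p(x \vee y) = p(x) \vee p(y)$ should drop out, since $p$ is a morphism onto the Boolean algebra $X/\gamma$.

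The main obstacle I anticipate is showing that the restriction functions are well-defined and, more subtly, that the partial order defined by restriction in the reconstructed presheaf coincides with the skew Boolean natural partial order — this is where right normality of $(X,\circ)$ must be used essentially, and where one has to be careful that all the ad hoc choices (of representatives $t \in X_f$) wash out. Once the presheaf structure is pinned down and shown to reproduce $\circ$, the verification that joins of compatible elements exist is a matter of exhibiting the right term and checking it against Corollary~\ref{cor: equality}, which is comparatively routine. A secondary point worth stating carefully is that the construction here is inverse (on objects) to that of Proposition~\ref{prop: sets_to_skew}, though a full proof of the categorical isomorphism belongs to Theorem~\ref{the: main_theorem1} rather than to this proposition.
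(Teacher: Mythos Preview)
Your proposal is correct and follows essentially the same route as the paper. The paper's proof is more compressed: rather than building the restriction maps by hand and checking well-definedness from right normality, it simply invokes Theorem~\ref{the: presheaves_right_normal_bands} to obtain the presheaf structure on $X \stackrel{p}{\twoheadrightarrow} X/\mathscr{R}$ in one stroke, and for the join it shows directly that $x \sim y$ forces $x\circ y = y\circ x = x\wedge y$ (via Corollary~\ref{cor: equality}), then uses (SB\ref{sb1}) together with $p(x\bullet y)=p(x)\vee p(y)$ to conclude $x\vee y = x\bullet y$. One small caution: your appeal to Lemma~\ref{le: compatibility} is formally circular, since that lemma is proved for the $\bullet$ \emph{defined from} a Boolean set; what you actually need here is Lemma~\ref{le: order_compatibility}(4) together with axiom (SB\ref{sb3}), which gives the same conclusion without circularity.
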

\begin{proof} Put $B = X/\mathscr{R}$.
Then $B$ is a Boolean algebra using in particular Section~1.5 of \cite{L3}.
Denote the elements of $B$ by $[x]_{R}$ and define $p(x) = [x]_{R}$.
Then $X \stackrel{p}{\twoheadrightarrow} B$ is a presheaf of sets by Theorem~\ref{the: presheaves_right_normal_bands}.
Suppose that $x \sim y$.
Clearly $y \circ x \leq x$.
But $x \wedge y \leq x$ and $p(x \wedge y) = p(x) \wedge p(y) = p(y \circ x)$.
It follows by Corollary~\ref{cor: equality} that $y \circ x = x \wedge y$.
Similarly $x \circ y = x \wedge y$.
Hence $x \circ y = y \circ x$.
But by axiom (SB1), $x = x \circ (x \bullet y)$.
Thus $x,y \leq x \bullet y$.
This and $p (x \bullet y) = p(x) \vee p(y)$ imply that $x \vee y$ exists and equals $x \bullet y$.
\end{proof}

Note that the order \eqref{eq:ord} is just the natural partial order on $(X,\circ,\bullet)$.

\begin{lemma}\label{le: mor1} Let $\varphi \colon X_1\to X_2$ be a morphism of right-hand skew Boolean algebras. Then
$\varphi$ is a morphism of Boolean sets. 
\end{lemma}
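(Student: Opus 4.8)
The plan is to take for $\overline{\varphi} \colon B_1 \to B_2$ the map induced by $\varphi$ on $\mathscr{R}$-classes, where $B_i = X_i/\mathscr{R}$ is the Boolean algebra underlying the Boolean set $X_i$ (via Proposition~\ref{prop: skew_to_sets}); as recalled in Section~1, this is a well-defined morphism of Boolean algebras, and it is the unique map that can satisfy (BM\ref{BM1}). Writing $p_i(x) = [x]_{\mathscr{R}}$, we get $\overline{\varphi}(p_1(x)) = \overline{\varphi}([x]_{\mathscr{R}}) = [\varphi(x)]_{\mathscr{R}} = p_2(\varphi(x))$, so (BM\ref{BM1}) holds by construction. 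Since $\overline{\varphi}$ is order preserving, $\overline{\varphi}(b) \leq \overline{\varphi}(a)$ whenever $b \leq a$ in $B_1$, so the restrictions appearing on the right-hand side of (BM\ref{BM2}) are all legitimately defined.

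It then remains to check (BM\ref{BM2}). First I would unwind the restriction maps of the Boolean set attached to a skew Boolean algebra: from $u \circ v = v\vert^{p(v)}_{p(u) \wedge p(v)}$ one reads off that, for $a \geq b$ in $B_1$, any $x \in (X_1)_a$, and any $z \in (X_1)_b$ — such $z$ exists because Boolean sets have global support — one has $x\vert^a_b = z \circ x$. Applying $\varphi$ and using that it preserves $\circ$ gives $\varphi(x\vert^a_b) = \varphi(z) \circ \varphi(x)$. By (BM\ref{BM1}), $\varphi(z) \in (X_2)_{\overline{\varphi}(b)}$ and $\varphi(x) \in (X_2)_{\overline{\varphi}(a)}$, so by the definition of $\circ$ in $X_2$ together with $\overline{\varphi}(b) \leq \overline{\varphi}(a)$,
$$\varphi(z) \circ \varphi(x) = \varphi(x)\vert^{\overline{\varphi}(a)}_{\overline{\varphi}(b) \wedge \overline{\varphi}(a)} = \varphi(x)\vert^{\overline{\varphi}(a)}_{\overline{\varphi}(b)},$$
which is precisely (BM\ref{BM2}).

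The argument is short, and the only point to get right is the identification $x\vert^a_b = z \circ x$ in the skew Boolean algebra picture. Alternatively one can bypass it: $\varphi$ preserves the natural partial order (it preserves $\circ$, and the order is $y \leq x \Leftrightarrow y = y\circ x$), so $\varphi(x\vert^a_b) \leq \varphi(x)$, while $\varphi(x)\vert^{\overline{\varphi}(a)}_{\overline{\varphi}(b)} \leq \varphi(x)$ by definition; since both have $p_2$-value $\overline{\varphi}(b)$ by (BM\ref{BM1}), Corollary~\ref{cor: equality} forces them to coincide. I do not anticipate any genuine obstacle here: the lemma is essentially a bookkeeping verification that an algebraic morphism respects the two ingredients (base map and restriction maps) of a morphism of Boolean sets.
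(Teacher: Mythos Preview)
Your proposal is correct and follows essentially the same argument as the paper: define $\overline{\varphi}$ as the induced map on $\gamma$-classes (equivalently $\mathscr{R}$-classes), observe that (BM\ref{BM1}) is immediate, and verify (BM\ref{BM2}) by writing $x\vert^a_b = z \circ x$ for any $z$ with $p(z)=b$, applying $\varphi$, and unwinding $\varphi(z)\circ\varphi(x)$ using (BM\ref{BM1}). The alternative route you sketch via Corollary~\ref{cor: equality} is also valid but is not the one the paper uses.
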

\begin{proof} Put $B_1=X_1/\gamma$ and $B_2=X_2/\gamma$ and let $p:X_1\to B_1$ and $q:X_2\to B_2$ be the projection maps. 
Let $\overline{\varphi}:B_1\to B_2$ be the morphism of Boolean algebras that underlies $\varphi$. It is immediate that (BM1) holds.  

Let $a\geq b$ in $B_1$ and let $x\in X_1$ be such that $p(x)=a$. Consider any $y\in B_1$ with $p(y)=b$. We have
$y\circ x=x|^a_b$. Since $\varphi$ preserves $\circ$, we have $\varphi(y\circ x)=\varphi(y)\circ\varphi(x)$.
On the other hand, we have that $q(\varphi(x))=\overline{\varphi}(a)$ and $q(\varphi(y))=\overline{\varphi}(b)$ by (BM1). Hence 
$\varphi(y)\circ\varphi(x)=\varphi(x)|^{\overline{\varphi}(a)}_{\overline{\varphi}(b)}$. This proves (BM2).
\end{proof}

\begin{lemma}\label{le: mor2} Let $\varphi \colon X_1\to X_2$ be a morphism of Boolean sets. 
Then $\varphi$ is a morphism of right-hand skew Boolean algebras.
\end{lemma}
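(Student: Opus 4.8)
The plan is to show that a morphism $\varphi \colon X_1 \to X_2$ of Boolean sets preserves $\circ$, $\bullet$ and $0$, where these operations are defined on each $X_i$ as in the beginning of this section. We are given (BM1) and (BM2) for $\varphi$ together with its underlying Boolean algebra morphism $\overline{\varphi} \colon B_1 \to B_2$, and by Lemma~\ref{BM3} we already know $\varphi$ preserves joins of compatible pairs.

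First I would check preservation of $\circ$. By definition $x \circ y = y\vert^{p(y)}_{p(x) \wedge p(y)}$, so this is a restriction map; since $\overline{\varphi}$ is a Boolean algebra morphism we have $\overline{\varphi}(p(x) \wedge p(y)) = \overline{\varphi}(p(x)) \wedge \overline{\varphi}(p(y)) = q(\varphi(x)) \wedge q(\varphi(y))$ using (BM1), and then (BM2) gives $\varphi(y\vert^{p(y)}_{p(x)\wedge p(y)}) = \varphi(y)\vert^{q(\varphi(y))}_{q(\varphi(x))\wedge q(\varphi(y))} = \varphi(x) \circ \varphi(y)$. Preservation of $0$ is immediate: $p(0_{X_1}) = 0_{B_1}$, so $q(\varphi(0_{X_1})) = \overline{\varphi}(0_{B_1}) = 0_{B_2}$, and since $X_2$ is a Boolean set, $q(z) = 0$ forces $z = 0$.

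The main work is $\bullet$. Recall $x \bullet y = x \vee (y \backslash x)$ where $y \backslash x = y\vert^{p(y)}_{p(y)\backslash p(x)}$ and $p(y) \backslash p(x) = (p(y) \wedge p(x))'$ computed in $p(y)^{\downarrow}$. The first step here is to verify that $\varphi$ preserves the relative complement of set elements, i.e. $\varphi(y \backslash x) = \varphi(y) \backslash \varphi(x)$. Since $\overline{\varphi}$ is a morphism of (generalized) Boolean algebras it preserves the relative complement operation $\backslash$ on $B_1$ — this is the characterization of $e\backslash f$ as the largest element below $e$ disjoint from $f$, which is expressible via $\vee$, $\wedge$ and $0$, all preserved by $\overline{\varphi}$ — so $\overline{\varphi}(p(y)\backslash p(x)) = \overline{\varphi}(p(y)) \backslash \overline{\varphi}(p(x)) = q(\varphi(y)) \backslash q(\varphi(x))$. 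Then (BM2) applied to the restriction $y \mapsto y\vert^{p(y)}_{p(y)\backslash p(x)}$ yields $\varphi(y\backslash x) = \varphi(y) \backslash \varphi(x)$ exactly. Once this is in hand, I would conclude: $x$ and $y\backslash x$ are compatible (as noted when $\bullet$ was defined), so by Lemma~\ref{BM3}, $\varphi(x \bullet y) = \varphi(x \vee (y\backslash x)) = \varphi(x) \vee \varphi(y\backslash x) = \varphi(x) \vee (\varphi(y) \backslash \varphi(x)) = \varphi(x) \bullet \varphi(y)$.

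I expect the only genuinely delicate point to be confirming that a morphism of generalized Boolean algebras preserves the relative complement $\backslash$; everything else is a direct application of (BM1), (BM2) and Lemma~\ref{BM3}. This is standard — $e\backslash f$ is the complement of $e\wedge f$ in the unital Boolean algebra $e^{\downarrow}$, and a morphism restricts to a homomorphism of unital Boolean algebras $e^{\downarrow} \to \overline{\varphi}(e)^{\downarrow}$ — but it is worth stating explicitly since generalized Boolean algebra morphisms are only required to preserve $\vee$, $\wedge$ and $0$ a priori. With that settled, the proof is a short assembly of the pieces above.
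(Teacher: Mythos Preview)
Your proposal is correct and follows essentially the same route as the paper: preserve $\circ$ via (BM1) and (BM2), preserve the set-level relative complement $y\backslash x$ by reducing to preservation of $\backslash$ in the base Boolean algebra and applying (BM2), then invoke Lemma~\ref{BM3} to get preservation of $\bullet$, and handle $0$ trivially. If anything, your justification for $\varphi(y\backslash x)=\varphi(y)\backslash\varphi(x)$ is cleaner than the paper's, which somewhat circularly appeals to ``morphisms of skew Boolean algebras preserve relative complements'' before that has been established; your direct argument via $\overline{\varphi}$ preserving $\backslash$ on $B_1$ is the right way to say it.
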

\begin{proof} We have that $\varphi$ preserves $\circ$ since
$$
\varphi(x\circ y)
=\varphi(y|^{p(y)}_{p(x)\wedge p(y)})
=\varphi(y)|^{\overline{\varphi}(p(y))}_{\overline{\varphi}(p(x))\wedge \overline{\varphi}(p(y))}=\varphi(y)|^{q(\varphi(y))}_{q(\varphi(p(x))\wedge q({\varphi}(p(y))}
=\varphi(x)\circ\varphi(y)
$$
applying (BM1) and (BM2).

Note that $\varphi(y\setminus x)=\varphi(y)\setminus\varphi(x)$ since morphisms of skew Boolean algebras preserve relative complements 
or by a direct verification similar to the one above. 
Applying the definition of $\bullet$ and Lemma \ref{BM3} it follows that $\varphi$ preserves $\bullet$.
It is clear that $\varphi$ preserves the zero, since it preserves the order.
\end{proof}

Finally, we need to prove that the constructions in Propositions~\ref{prop: sets_to_skew} and \ref{prop: skew_to_sets} are mutually inverse.
To do this only requires the following lemma.

\begin{lemma} Let $(X,\circ,\bullet)$ be a right-hand skew Boolean algebra. 
Then  $x \bullet y = x \vee y \backslash x$.
\end{lemma}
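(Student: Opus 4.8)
The plan is to recall the construction of $\bullet$ on a Boolean set and show that, when applied to the Boolean set underlying a right-hand skew Boolean algebra $(X,\circ,\bullet)$ via Proposition~\ref{prop: skew_to_sets}, it reproduces the original operation $\bullet$. Write $p$ for the projection $X \to B = X/\mathscr{R}$. Recall from the construction preceding Lemma~\ref{le: properties} that the operation induced by the Boolean-set structure sends $x,y$ to $x \vee (y \backslash x)$, where here $y \backslash x = y\vert^{p(y)}_{p(y) \backslash p(x)}$ is the restriction of $y$ determined in $p(y)^{\downarrow}$, and the join is the join in the Boolean set (which exists because the two arguments are compatible, being both below a common element after the relative complement is removed). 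So what must be shown is that this Boolean-set join $x \vee (y \backslash x)$ coincides with the skew-Boolean-algebra product $x \bullet y$.

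First I would pin down $y \backslash x$ on the skew-algebra side. By the definition of the relative complement in a right-hand skew Boolean algebra, $y \backslash x$ is the complement of $x \circ y \leq y$ inside the unital Boolean algebra $y^{\downarrow}$; since under the identification of Proposition~\ref{prop: skew_to_sets} the fibre $X_{p(y)}$ is exactly $y^{\downarrow}$ (the $\mathscr R$-class of $y$, which is $\{y\circ s\circ y : s\in B\}$) with its Boolean structure, and restriction $y\vert^{p(y)}_{b}$ corresponds to meeting with the appropriate element, the two descriptions of $y \backslash x$ agree. Concretely, $x \circ y = x \wedge y$ in the sense used in the proof of Proposition~\ref{prop: skew_to_sets} (there it is shown $y \circ x = x \wedge y$ when $x\sim y$; the relevant statement here is the computation of $x\circ y$ as a restriction of $y$), so $y\backslash x$ is the relative complement of that restriction inside $y^\downarrow$, which is precisely $y\vert^{p(y)}_{p(y)\backslash p(x)}$.

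Next I would verify the join. In the skew Boolean algebra, by axiom (SB1) we have $x = x \circ (x \bullet y)$, hence $x \leq x \bullet y$; and one checks $y \backslash x \leq x \bullet y$ as well (e.g.\ $y\backslash x \leq y$ and, using the distributivity identities recorded after the axioms, $x\bullet y$ absorbs $y\backslash x$, or directly: $(y\backslash x)\circ(x\bullet y)=y\backslash x$). So $x$ and $y\backslash x$ are both below $x\bullet y$, hence compatible, and $x \vee (y\backslash x) \leq x \bullet y$. For the reverse inequality I would compute the images under $p$: using Lemma~\ref{le: boolean_set_property} (or the Remark after the definition of a Boolean set), $p(x \vee (y\backslash x)) = p(x) \vee (p(y)\backslash p(x)) = p(x) \vee p(y)$, while $p(x\bullet y) = p(x)\vee p(y)$ since $\bullet$ induces $\vee$ on $B = X/\gamma$. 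Two elements of a Boolean set that are comparable and have the same image under $p$ are equal, by Corollary~\ref{cor: equality}; therefore $x \vee (y \backslash x) = x \bullet y$.

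The main obstacle I anticipate is the bookkeeping in the second paragraph: carefully matching the relative complement $y\backslash x$ as defined on the skew-algebra side (complement of $x\circ y$ in $y^{\downarrow}$) with the restriction $y\vert^{p(y)}_{p(y)\backslash p(x)}$ on the Boolean-set side, which requires knowing precisely how the Boolean structure on the fibre $X_{p(y)}$ transports through the equivalence of Theorem~\ref{the: presheaves_right_normal_bands} and through Proposition~\ref{prop: skew_to_sets}. Once that identification is in hand, the inequality $x\vee(y\backslash x)\le x\bullet y$ from (SB1) together with the equality of $p$-images and Corollary~\ref{cor: equality} closes the argument with no further computation.
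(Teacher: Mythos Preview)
Your core argument (the third paragraph) is exactly the paper's proof: show $x \leq x\bullet y$ from (SB1), show $y\backslash x \leq x\bullet y$ via the distributivity of $\circ$ over $\bullet$, deduce $x\vee(y\backslash x)\leq x\bullet y$, and then match $p$-images to conclude equality. The paper carries out the distributivity step in full, computing both $(x\bullet y)\circ(y\backslash x)$ and $(y\backslash x)\circ(x\bullet y)$ using $x\circ(y\backslash x)=(y\backslash x)\circ x=0$ and $y\backslash x\leq y$, whereas you only gesture at it; but the structure is identical.

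One slip worth flagging: in your second paragraph you write that ``the fibre $X_{p(y)}$ is exactly $y^{\downarrow}$ (the $\mathscr R$-class of $y$, which is $\{y\circ s\circ y : s\in B\}$).'' This conflates two different sets. The fibre $X_{p(y)}$ is the $\mathscr R$-class of $y$, while $y^{\downarrow}=\{y\circ s\circ y:s\in X\}$ is the set of elements \emph{below} $y$ in the natural partial order; these coincide only when the $\mathscr R$-class is trivial. The identification of the two notions of $y\backslash x$ is better argued directly: the restriction $y|^{p(y)}_{p(y)\backslash p(x)}$ lies below $y$ with $p$-image $p(y)\backslash p(x)$, and Corollary~\ref{cor: equality} then pins it down as the complement of $x\circ y$ in $y^\downarrow$. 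In any case this whole paragraph is extraneous to the lemma as the paper states and proves it---the argument there runs entirely on the skew-algebra side, and the matching of the two $\backslash$'s is absorbed into the surrounding text rather than this lemma.
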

\begin{proof} By axiom (SB1), $x = x \circ (x \bullet y)$ and so $x \leq x \bullet y$.
We show that $y \backslash x \leq x \bullet y$.
Using the fact that $\circ$ distributes over $\bullet$, mentioned after the axioms for right-hand skew Boolean algebras,
and the fact that 
$$x \circ (y \backslash x) = (y \backslash x) \circ x = 0$$
and
$y \geq y \backslash x$,
we calculate
$$(x \bullet y) \circ (y \backslash x) = (x \circ (y \backslash x)) \bullet (y \circ (y \backslash x)) = 0 \bullet (y \backslash x) = y \backslash x$$
and   
$$(y \backslash x) \circ (x \bullet y) = ((y \backslash x) \circ x) \bullet ((y \backslash x) \circ y) = 0 \bullet (y \backslash x) = y \backslash x.$$
It follows that $x, y \backslash x \leq x \bullet y$.
Thus $x \vee (y \backslash x) \leq x \bullet y$.
We now use the fact that under the congruence $\mathscr{R}$, whose natural map is denoted by $p$,
we have that $p(x \bullet y) = p(x \vee (y \backslash x))$.
It follows that $x \bullet y = x \vee (y \backslash x)$.
\end{proof}

\section{Proof of Theorem~\ref{the: main_theorem2} }

The proof is more complex than for our first theorem and so we split it up into steps.

\subsection{From an \'{e}tal\'{e} space to a Boolean set}\label{subs11}

In this section, we describe how to  construct a Boolean set from an \'etal\'e space $(E,p,X)$.  
Denote by $X^{\ast}$ the Boolean algebra of compact-open subsets of $X$
and by $E^{\ast}$ the set of all compact-open local sections of $p \colon E \rightarrow X$.
If $A$ is a compact-open local section in $E$ then $p(A)$ is a compact-open set in $X$ by Lemma~\ref{le: l2}.
It follows that $p$ induces a map $\widetilde{p} \colon E^{\ast} \to X^{\ast}$. 
Let $A,B$ be compact-open sets in $X$ such that $A\supseteq B$ and let $C\in E(A)$ be a compact-open local section.
Define
$$C|^A_B=C\cap p^{-1}(B)$$
and call it the {\em restriction} of $C$ from $A$ to $B$. 
It is clear that  $C|^A_B$ is a compact-open local section in $E(B)$. 
Thus $\widetilde{p} \colon E^{\ast}\to X^{\ast}$ is a presheaf of sets with global support.

\begin{proposition}\label{prop: p1} 
$E^{\ast}\stackrel{\widetilde{p}}{\twoheadrightarrow} X^{\ast}$ 
is a Boolean set. 
\end{proposition}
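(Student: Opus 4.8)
The plan is to verify, one axiom at a time, that $E^{\ast} \stackrel{\widetilde{p}}{\twoheadrightarrow} X^{\ast}$ satisfies the definition of a Boolean set. We already know from the discussion preceding the statement that $\widetilde{p} \colon E^{\ast} \to X^{\ast}$ is a presheaf of sets with global support: the restriction maps $C \mapsto C|^A_B = C \cap p^{-1}(B)$ are well-defined by Lemma~\ref{le: l2}, they are plainly functorial ($C|^A_A = C$ and restriction along $A \supseteq B \supseteq D$ composes correctly since intersection does), global support is Lemma~\ref{le: l2}\eqref{i3}, and $X^{\ast}$ is a Boolean algebra under inclusion by classical Stone duality. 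So three things remain to check: (i) $E^{\ast}$ has a minimum element in the natural partial order \eqref{eq:ord}; (ii) if $C \sim D$ in $E^{\ast}$ then $C \vee D$ exists; and (iii) $\widetilde{p}(C) = \emptyset$ forces $C = \emptyset$.

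Item (iii) is immediate, since the empty local section is the only one lying over $\emptyset$, and it is also the minimum element required in (i): if $C \in E^{\ast}$ is any compact-open local section, then $\emptyset = C|^{\widetilde p(C)}_{\emptyset} \leq C$ by \eqref{eq:ord}, so $\emptyset$ is below everything. (One should note $\emptyset \in E^{\ast}$ since it is trivially a compact-open local section over the compact-open set $\emptyset$.) The real content is item (ii). First I would unwind what $C \sim D$ means here: by definition $C \wedge D$ exists in $E^{\ast}$ and $\widetilde p(C \wedge D) = \widetilde p(C) \cap \widetilde p(D) = p(C) \cap p(D)$. The natural candidate for the meet is the set-theoretic intersection $C \cap D$, and the natural candidate for the join is the set-theoretic union $C \cup D$; the task is to show that $C \sim D$ forces $C \cup D$ to be a (compact-open) local section, hence the join in $E^{\ast}$.

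The key step, and the main obstacle, is this: I must show that if $C \sim D$ then $C \cup D$ is a local section, i.e. $p$ restricted to $C \cup D$ is injective. The danger is that some point $x$ of $X$ is hit by a point of $C$ and a different point of $D$. I would argue as follows. Suppose $c \in C$, $d \in D$ with $p(c) = p(d) = x$; I want $c = d$. Then $x \in p(C) \cap p(D) = \widetilde p(C \wedge D)$, so there is a (unique, by injectivity of $p$ on the local section $C \wedge D$) point $m \in C \wedge D$ with $p(m) = x$. Now $C \wedge D \leq C$ in $E^{\ast}$ means (by \eqref{eq:ord}) $C \wedge D = C|^{\widetilde p(C)}_{\widetilde p(C \wedge D)} = C \cap p^{-1}(p(C)\cap p(D))$; since $p(c) = x \in p(C) \cap p(D)$ we get $c \in C \wedge D$, and then $p(c) = x = p(m)$ with $p$ injective on $C \wedge D$ forces $c = m$. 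Symmetrically $d = m$, so $c = d$. Hence $C \cup D$ is a local section. It is open, and it is compact-open since $p(C \cup D) = p(C) \cup p(D)$ is compact-open in $X$ and Lemma~\ref{le: l2}\eqref{i2} applies; thus $C \cup D \in E^{\ast}$. Finally I would check $C \cup D$ really is the join in the order \eqref{eq:ord}: $C, D \leq C \cup D$ is clear since $C = (C\cup D)\cap p^{-1}(p(C))$ and similarly for $D$, and if $C, D \leq G$ then $C \cup D \subseteq G$ with $\widetilde p(C \cup D) = p(C)\cup p(D)$, so $C \cup D = G|^{\widetilde p(G)}_{p(C)\cup p(D)} \leq G$ by Corollary~\ref{cor: equality} (both sit below $G$ and have the same image). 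This establishes all the axioms, and by the Remark following the definition of Boolean set, $C \sim D$ is then equivalent to the existence of $C \vee D$, with $\widetilde p(C\vee D) = \widetilde p(C)\vee\widetilde p(D)$ by Lemma~\ref{le: boolean_set_property}.
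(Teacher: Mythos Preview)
Your proof is correct and proceeds along a route slightly different from the paper's. The paper constructs the join as $c = a \cup b|^{p(b)}_{p(b)\setminus p(a)}$, a union of two local sections lying over \emph{disjoint} pieces of the base, so that $c$ is trivially a local section; it then invokes Lemma~\ref{le: order_compatibility}(2) (from $a \sim b$ and $p(c) \geq p(b)$) to conclude $c \geq b$. You instead show directly that the full set-theoretic union $C \cup D$ is already a local section, by unpacking compatibility: $C \wedge D$ exists, equals both $C$ and $D$ restricted to $p(C)\cap p(D)$, and hence $C$ and $D$ agree fibrewise over the overlap. The two constructions of course yield the same set; your argument makes the geometric meaning of $\sim$ (``sections agree on overlaps'') explicit, while the paper's avoids touching the meet at all and leans on the abstract Lemma~\ref{le: order_compatibility}.

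One small remark: your final appeal to Corollary~\ref{cor: equality} to conclude $C\cup D \leq G$ is slightly circular as written, since that corollary presupposes both elements are already $\leq G$, which is exactly what you are proving. The conclusion is immediate anyway once you observe that for compact-open local sections, set containment $C \cup D \subseteq G$ is the same as $C \cup D \leq G$ in the order~\eqref{eq:ord}: any $g \in G$ with $p(g) \in p(C\cup D)$ must coincide with the unique point of $C\cup D$ over $p(g)$, since $C\cup D \subseteq G$ and $p$ is injective on $G$.
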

\begin{proof} It is clear by Lemma~\ref{le: l2}, 
that if $\widetilde{p}(x) = 0$ then $x$ is the empty local section.

Let $a,b\in E^{\ast}$ be such that $a\sim b$.  
We have to show that $\exists a\vee b$. 
We can assume that $a,b\neq \emptyset$.
Observe that $p(b)\setminus p(a)\in X^{\ast}$  and thus $b|^{p(b)}_{p(b)\setminus p(a)}\in E^{\ast}$.
Let $c=a\cup b|^{p(b)}_{p(b)\setminus p(a)}$. 
We have that $c$ is compact-open as a union of two such sets, 
and also the restriction of the map $p$ to $c$ is injective by the construction. 
It follows that $c\in E^{\ast}$. 
Clearly $c\geq a$. 
Since $a\sim b$ and $p(c)\geq p(b)$ we have $c\geq b$. 
Let $d\in E^{\ast}$ be such that $d\geq a,b$. 
Then $d\geq a, b|^{p(b)}_{p(b)\setminus p(a)}$. 
It follows that $d\geq a\cup b|^{p(b)}_{p(b)\setminus p(a)}=c$. 
Hence $c=a\vee b$. 
\end{proof}

We call the Boolean set $(E^{\ast},\widetilde{p},X^{\ast})$ the {\em dual} of the \'{e}tal\'e space $(E,p,X)$.

\subsection{From a Boolean set to an \'{e}tal\'e space} 

The passage in this direction will be a bit more involved.
  
\begin{lemma}
Let $X \stackrel{p}{\twoheadrightarrow} B$ be a Boolean set and $G$ a filter in $X$. 
Then if $x,y \in G$ then $x \circ y \in G$.
\end{lemma}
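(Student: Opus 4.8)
The plan is to use directly the two defining properties of a filter: down-directedness and upward closure. Given $x,y\in G$, down-directedness yields some $z\in G$ with $z\leq x$ and $z\leq y$. The crux is then to show that in fact $z\leq x\circ y$; granted this, upward closure of $G$ immediately gives $x\circ y\in G$, which is exactly what we want.

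To establish $z\leq x\circ y$, I would argue at the level of $p$ and the restriction functions. Since $x\circ y=y\vert^{p(y)}_{p(x)\wedge p(y)}$, we have $p(x\circ y)=p(x)\wedge p(y)$, and from $z\leq x$, $z\leq y$ together with the fact that $p$ is order preserving we get $p(z)\leq p(x)\wedge p(y)=p(x\circ y)$. Moreover $z\leq y$ means $z=y\vert^{p(y)}_{p(z)}$, and the composition law for restriction functions, applied along $p(y)\geq p(x)\wedge p(y)\geq p(z)$, rewrites this as $z=\bigl(y\vert^{p(y)}_{p(x)\wedge p(y)}\bigr)\vert^{p(x)\wedge p(y)}_{p(z)}=(x\circ y)\vert^{p(x\circ y)}_{p(z)}$, that is, $z\leq x\circ y$ by \eqref{eq:ord}. (An alternative route avoiding explicit restrictions: since $z\leq x,y$, Lemma~\ref{le: order_compatibility} gives $x\sim y$, so $x\circ y=y\circ x$ is a common lower bound of $x$ and $y$ which, again by Lemma~\ref{le: order_compatibility}, dominates any lower bound such as $z$.)

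I do not anticipate a genuine obstacle here; the one point that needs care is precisely the verification that the down-directed witness $z$ lies below $x\circ y$ and not merely below $x$ and $y$ separately, which is the short computation indicated above. Everything else follows at once from the definition of a filter together with Lemma~\ref{le: order_compatibility}.
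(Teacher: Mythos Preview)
Your proposal is correct and follows essentially the same approach as the paper: pick $z\in G$ with $z\leq x,y$ by down-directedness, verify $z\leq x\circ y$, and conclude by upward closure. The only cosmetic difference is in the middle step: the paper uses the band characterization $z\leq w\iff z=z\circ w$ together with associativity of $\circ$ to compute $z\circ(x\circ y)=(z\circ x)\circ y=z\circ y=z$, whereas you unpack the same fact via the restriction maps (or, in your alternative, via the meet $x\wedge y$); these are equivalent one-line verifications.
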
 
\begin{proof} Let $x,y \in X$.
Since $G$ is downwards directed there exists $z \in G$ such that
$z \leq x,y$.
That is $z = z \circ x = z \circ y$.
Now observe that $z \circ (x \circ y) = (z \circ x) \circ y = z \circ y = z$.
It follows that $z \leq x \circ y$.
But $G$ is closed upwards and so $x \circ y \in G$.
\end{proof}

\begin{lemma}\label{le: image_is_filter}
Let $X \stackrel{p}{\twoheadrightarrow} B$ be a Boolean set.
If $G$ is a proper filter in $X$ then $p(G)$ is a proper filter in $B$.
\end{lemma}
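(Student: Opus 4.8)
The plan is to verify the three defining conditions for $p(G)$ to be a proper filter in $B$: it is upwardly closed, it is downwardly directed, and it is proper (i.e.\ $p(G) \neq B$).

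First I would check upward closure. Suppose $a \in p(G)$ and $b \geq a$ in $B$; write $a = p(x)$ for some $x \in G$. The point is to produce an element of $G$ lying above some lift of $b$. Since $X$ has global support there is some $z \in X_b$, and then $x \circ z = z\vert^{p(z)}_{p(x) \wedge p(z)} = z\vert^{b}_{a}$ has $p$-image $a$; but more to the point I want an element of $X$ with $p$-image $b$ that sits above an element of $G$. The cleanest route: since $p$ reflects the partial order is the wrong direction here; instead use that $x \in G$ and consider any $w \in X_b$. Then $x \circ w \leq w$ with $p(x \circ w) = a \wedge b \cdot$ — hmm, $a \leq b$ so $p(x\circ w) = a$, and by Corollary~\ref{cor: equality} applied to $x$ and $x \circ w$ (both $\leq$ something? not obviously). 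The correct argument is simpler: take $w \in X_b$, form $x \bullet w = x \vee (w \backslash x)$ in the associated skew Boolean algebra; then $x \leq x \bullet w$, so $x \bullet w \in G$ since $G$ is upwardly closed, and $p(x \bullet w) = p(x) \vee p(w) = a \vee b = b$. Hence $b = p(x \bullet w) \in p(G)$.

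Next, downward directedness of $p(G)$: given $a = p(x)$ and $b = p(y)$ with $x, y \in G$, by downward directedness of $G$ there is $z \in G$ with $z \leq x, y$; then $p(z) \leq p(x), p(y)$ since $p$ is order preserving, and $p(z) \in p(G)$, so $p(G)$ is downwardly directed. Since $G$ is non-empty, so is $p(G)$, and therefore $p(G)$ is a filter in $B$.

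The main obstacle — and the only genuinely non-formal point — is properness: showing $p(G) \neq B$, equivalently $0 \notin p(G)$. Suppose for contradiction $0 = p(x)$ for some $x \in G$. By the defining property of a Boolean set, $p(x) = 0$ forces $x = 0$. But then $0 \in G$, and since $0$ is the minimum element of $X$ we have $0 \leq w$ for every $w \in X$, so upward closure of $G$ gives $G = X$, contradicting that $G$ is a proper filter. Hence $0 \notin p(G)$, so $p(G)$ is proper. I would organize the write-up as: (i) $p(G)$ is upwardly closed (the $x \bullet w$ trick, invoking Lemma~\ref{le: boolean_set_property} or the remark after the definition of a Boolean set for $p(x \bullet y) = p(x) \vee p(y)$), (ii) $p(G)$ is downwardly directed and non-empty, (iii) $p(G)$ is proper via the $p^{-1}(0) = \{0\}$ condition.
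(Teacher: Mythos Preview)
Your proof is correct and follows essentially the same approach as the paper: properness via $p^{-1}(0)=\{0\}$, down-directedness from order-preservation of $p$, and upward closure by joining $x\in G$ with an element sitting over $b\setminus p(x)$ to produce something in $G$ with $p$-image $b$. The only cosmetic difference is that you package the upward-closure step using the operation $\bullet$ from Section~2 (so that compatibility of $x$ and $w\backslash x$ is already built in), whereas the paper works directly in the Boolean-set language, picking any $x'$ with $p(x')=b\setminus p(g)$, verifying $g\sim x'$ by observing their only common lower bound is $0$, and then forming $g\vee x'$.
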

\begin{proof} The function $p$ maps non-zero elements to non-zero elements.
Thus $p(G)$ does not contain zero.
It is clearly down directed.
We prove that it is upwardly closed.
Let $p(g) \leq b$.
Then in the Boolean algebra $B$ we may form the element $b\setminus p(g)$.
Let $x \in X$ such that $p(x) = b \setminus p(g)$.
Observe that if $y \leq g,x$ then $p(y) = 0$ and so $y = 0$.
It follows that $g \wedge (b \setminus p(g)) = 0$.
Hence $g \sim x$ and so $g \vee x$ exists.
By Lemma~\ref{le: boolean_set_property}, we have that $p(g \vee x) = b$.
But $g \vee x \in G$, as required.
\end{proof}

Let $X\stackrel{p}{\twoheadrightarrow} B$ be a Boolean set.
Denote by $X^{\ast}$ the set of ultrafilters of $X$ and by $B^{\ast}$ the set of ultrafilters of $B$. 
Let $F \subseteq B$ be an ultrafilter and $a,b\in p^{-1}(F)$. 
We say that $a$ and $b$ are {\em  conjugate over $F$},
denoted by $a \sim_{F} b$,
if there is $c \in p^{-1}(F)$ such that $c \leq a,b$.
Using Lemma~\ref{le: order_compatibility}, it is easy to show that conjugacy over $F$ is an
equivalence relation on $p^{-1}(F)$.
We denote the equivalence class containing the element $a$ by $[a]_{F}$. 

\begin{lemma}\label{lem:l2} Let $F$ be an ultrafilter in $B$.
Then $p^{-1}(F)$ is a disjoint union of ultrafilters in $X$.
Each such ultrafilter is of the form $[a]_{F}$ for some $a$ such that $p(a) \in F$.
In addition, $p([a]_{F}) = F$.
\end{lemma}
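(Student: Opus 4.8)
The plan is to verify the three claims in turn, using the conjugacy relation $\sim_F$ and the fact that $p$ reflects the order. First I would check that each class $[a]_F$ is a filter in $X$. Upward closure: if $x \in [a]_F$ and $y \geq x$ in $X$, then $p(y) \geq p(x) \in F$, so $p(y) \in F$ since $F$ is a filter; and $x$ witnesses $x \leq x,y$ with $x \in p^{-1}(F)$, so $y \sim_F x$, giving $y \in [a]_F$. Downward directedness: given $x,y \in [a]_F$, by definition of $\sim_F$ (and transitivity) there is $c \in p^{-1}(F)$ with $c \leq x,y$, so $c \in [a]_F$. Properness is automatic since $p^{-1}(F) \neq X$ (as $F \neq B$, and in particular $0 \notin p^{-1}(F)$). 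The disjointness of distinct classes is immediate from $\sim_F$ being an equivalence relation, and their union is $p^{-1}(F)$ by construction; any ultrafilter of $X$ contained in $p^{-1}(F)$ must, being down-directed, lie inside a single $\sim_F$-class, which is the point of the decomposition.

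Next I would show $p([a]_F) = F$. The inclusion $p([a]_F) \subseteq F$ is clear. For the reverse, let $b \in F$; I want some $x \in [a]_F$ with $p(x) = b$. The natural candidate is obtained by first passing to $b \wedge p(a) \in F$ (using that $F$ is a filter), taking the restriction $a' = a|^{p(a)}_{b \wedge p(a)} \leq a$, so $a' \in [a]_F$, and then extending $a'$ back up to level $b$. For the extension step I would argue as in Lemma~\ref{le: image_is_filter}: pick $z \in X$ with $p(z) = b \setminus p(a)$; then $z \wedge a' = 0$ (any common lower bound has $p$-image $0$), so $z \sim a'$, hence $z \vee a'$ exists and by Lemma~\ref{le: boolean_set_property} has $p$-image $(b \wedge p(a)) \vee (b \setminus p(a)) = b$. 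It remains to see $z \vee a' \in [a]_F$: since $a' \leq z \vee a'$ and $a' \in p^{-1}(F)$, the element $a'$ itself witnesses conjugacy of $z \vee a'$ with $a$ over $F$ (using that $p(z \vee a') = b \in F$). So $p([a]_F) = F$.

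The main obstacle is the remaining claim that each $[a]_F$ is in fact an \emph{ultra}filter of $X$, i.e.\ maximal among proper filters. Suppose $G$ is a proper filter of $X$ with $[a]_F \subseteq G$, and let $g \in G$; I must show $g \in [a]_F$. By Lemma~\ref{le: image_is_filter}, $p(G)$ is a proper filter of $B$ containing $p([a]_F) = F$; since $F$ is an ultrafilter of $B$, this forces $p(G) = F$, so $p(g) \in F$, i.e.\ $g \in p^{-1}(F)$. Now I need a common lower bound of $g$ and $a$ lying in $p^{-1}(F)$. Since $p([a]_F) = F \ni p(g)$, there is $a' \in [a]_F$ with $p(a') = p(g)$; but $a', g \in G$ and $G$ is down-directed, so there is $c \in G$ with $c \leq a', g$. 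Then $p(c) \leq p(a') = p(g)$, and I claim $p(c) \in F$: indeed $p(c) \in p(G) = F$. Hence $c \in p^{-1}(F)$ and $c \leq a' \sim_F a$ while $c \leq g$, so $c$ witnesses $g \sim_F a$, giving $g \in [a]_F$. Thus $G = [a]_F$, and every $[a]_F$ is an ultrafilter, completing the proof.
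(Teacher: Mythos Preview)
Your proof is correct and follows essentially the same route as the paper. The construction you give for $p([a]_F)=F$ (restrict $a$ down to level $b\wedge p(a)$, then join with something at level $b\setminus p(a)$) is exactly the paper's argument; and your maximality step, via $p(G)=F$ followed by down-directedness of $G$, is the same idea as the paper's, though the paper takes the slightly shorter path of using $a$ itself (which already lies in $G$) rather than first locating an $a'\in[a]_F$ with $p(a')=p(g)$.
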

\begin{proof} 
It is immediate that $[a]_{F}$ is a filter.
We show first that $p([a]_{F})=F$. 
To do this, 
it is enough to verify the inclusion $F\subseteq p([a]_{F})$ since the opposite inclusion holds by the definition of $\sim_F$.
Let $b\in X$ is such that $p(b)\in F$. 
Let $e=a|^{p(a)}_{p(a)\wedge p(b)}$ and $f=b|^{p(b)}_{p(b)\setminus p(a)}$. 
It is clear that $e\sim f$. 
Put $c=e\vee f$. 
We have $p(c)=p(b)$ and $c \sim_{F} a$. 
It follows that $p(b)\in p([a]_{F})$.

Let $G$ be a filter in $X$ and $[a]_{F}\subseteq G$. 
Then $F=p([a]_{F})\subseteq p(G)$. 
But $p(G)$ is a filter of $B$ by Lemma~\ref{le: image_is_filter}. 
By maximality of $F$ it follows that $p(G)=F$. 
Let $b\in G$. Since $a,b\in G$ and $G$ is down directed then there is $c\in G$ such that $c\leq a,b$. 
Since also $p(c),p(b)\in F$ then $b\in [a]_{F}$. 
Hence $G\subseteq [a]_{F}$, and so $[a]_{F}= G$. 
\end{proof}

We now prove that every ultrafilter has the above form.

\begin{lemma}\label{lem:l3} 
Let $G$ be an ultrafilter of $X$. 
Then there is an ultrafilter $F$ of $B$ such that $G = [a]_{F}$ for any $a\in G$. 
In particular, $G\subseteq p^{-1}(F)$.
\end{lemma}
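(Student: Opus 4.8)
The goal is to show that any ultrafilter $G$ of $X$ arises as $[a]_F$ for a suitable ultrafilter $F$ of $B$, and that this works for \emph{every} $a \in G$. The natural candidate for $F$ is $p(G)$: by Lemma~\ref{le: image_is_filter} this is a proper filter of $B$, so the plan is first to upgrade it to an ultrafilter. To see $p(G)$ is already maximal, suppose $F'$ is a filter of $B$ with $p(G) \subseteq F'$, pick $b \in F'$, and produce an element of $X$ above some element of $G$ whose image under $p$ dominates the obstruction; more directly, I would take $a \in G$, form $c = a|^{p(a)}_{p(a) \wedge b}$, and argue that if $b \notin p(G)$ then $a \setminus c$ (the part of $a$ living over $p(a) \setminus b$) together with $G$ generates a strictly larger filter, contradicting maximality of $G$. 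Rather than redo this, the cleanest route is: set $F = p(G)$, check directly that $F$ is an ultrafilter of $B$ using that $G$ is an ultrafilter of $X$ and the fact (from the lemma preceding Lemma~\ref{le: boolean_set_property}, that $p$ reflects the order) that every element below some $p(g)$ is $p$ of an element below $g$.

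Once $F = p(G)$ is known to be an ultrafilter of $B$, I would invoke Lemma~\ref{lem:l2}: it tells us $p^{-1}(F)$ is a disjoint union of ultrafilters of $X$, each of the form $[a]_F$ with $p([a]_F) = F$. Now $G \subseteq p^{-1}(F)$ trivially, since $p(G) = F$. Because the sets $[a]_F$ partition $p^{-1}(F)$ and each is an ultrafilter (hence a maximal proper filter), and $G$ is a filter contained in $p^{-1}(F)$, the filter $G$ must be contained in one of the blocks $[a]_F$ — indeed pick any $a \in G$; then for any $b \in G$ there is $c \in G$ with $c \leq a, b$ by down-directedness, and $p(c) \in F$, so $b \sim_F a$, giving $G \subseteq [a]_F$. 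Maximality of $G$ as a proper filter of $X$ then forces $G = [a]_F$. Since $a \in G$ was arbitrary, $G = [a]_F$ for \emph{every} $a \in G$, and in particular $G \subseteq p^{-1}(F)$.

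The main obstacle is the first step: verifying that $p(G)$ is genuinely maximal among proper filters of $B$, not merely a proper filter. The subtlety is that enlarging $p(G)$ to a bigger filter of $B$ must be shown to force an enlargement of $G$ itself in $X$, which requires lifting elements of $B$ to $X$ compatibly with $G$ — this is exactly where one uses global support (so preimages are nonempty), the order-reflecting property of $p$, and the Boolean-set axiom that compatible elements have joins with $p(x \vee y) = p(x) \vee p(y)$. Concretely: if $b \in B \setminus p(G)$ were such that $p(G) \cup \{b\}$ generated a proper filter $F'$, I would take $a \in G$ and split $a$ as $(a|^{p(a)}_{p(a)\wedge b}) \vee (a \setminus (a|^{p(a)}_{p(a)\wedge b}))$; the second summand has image $p(a) \setminus b$, which together with $b$ and $p(G)$ would have to lie in the proper filter $F'$, forcing $p(a) \setminus b$ and $b$ to be compatible in a filter, hence $0 \in F'$, a contradiction. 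Carefully choosing this decomposition and checking the compatibility bookkeeping is the one genuinely fiddly point; everything after that is a direct appeal to Lemmas~\ref{le: image_is_filter} and~\ref{lem:l2}.
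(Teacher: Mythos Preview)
Your second paragraph is exactly the paper's argument, but the paper gets there without your first step. Rather than proving that $p(G)$ is already an ultrafilter, the paper simply extends the proper filter $p(G)$ to \emph{some} ultrafilter $F$ of $B$ (every proper filter in a Boolean algebra sits inside an ultrafilter). Then your paragraph-two argument runs verbatim: for any $a,b\in G$ down-directedness gives $c\in G$ with $c\le a,b$, and $p(c)\in p(G)\subseteq F$, so $b\in[a]_F$; hence $G\subseteq[a]_F$, and maximality of $G$ forces equality. As a corollary $p(G)=p([a]_F)=F$ by Lemma~\ref{lem:l2}, so $p(G)$ \emph{is} an ultrafilter after all---but one never has to prove this directly.

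Your direct attack on maximality of $p(G)$ has genuine gaps. In the first paragraph you propose adjoining $a\setminus c$ (the piece of $a$ over $p(a)\setminus b$) to $G$; but if $b\ge p(a)$ this element is $0$, so you would be adjoining $0$ and not getting a proper filter. The element that actually works is $c=a|^{p(a)}_{p(a)\wedge b}$ itself: one checks $\{c\}\cup G$ generates a proper filter (using $p(g)\wedge b\neq 0$ for all $g\in G$) that strictly contains $G$ since $p(c)=p(a)\wedge b\notin p(G)$. In the third paragraph, the claim that $p(a)\setminus b$ ``would have to lie in the proper filter $F'$'' is unjustified: $F'$ is upward closed, not downward closed, and $p(a)\setminus b\le p(a)$ gives you nothing. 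The only way to force one of the two summands of $a$ into $G$ is primeness of ultrafilters in $X$ (Lemma~\ref{le: ultrafilters_are_prime}), but that lemma is proved \emph{after} this one and depends on it, so invoking it here would be circular.
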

\begin{proof} 
By Lemma~\ref{le: image_is_filter}, we have that $p(G)$ is a filter in $B$. 
Since every filter of a Boolean algebra is contained in some ultrafilter then there is an ultrafilter $F$ in $B$ such that $p(G)\subseteq F$. 
Consider an arbitrary $a\in G$. 
We show that $G=[a]_{F}$. 
In view of the maximality of $G$ it is enough to verify only the inclusion $G\subseteq [a]_{F}$. Let $b\in G$. 
Since $G$ is down directed then there is $c\in G$ such that $c\leq a,b$. But $p(a),p(b),p(c)\in F$. 
It follows that $b\in [a]_{F}$, so that the inclusion $G\subseteq [a]_{F}$ is established.
\end{proof}

We summarize what we have found in the following.

\begin{proposition}\label{prop: ultrafilters_in_Boolean_sets}
Let $X\stackrel{p}{\twoheadrightarrow} B$ be a Boolean set.
Every ultrafilter in $X$ is of the form $[a]_{F}$ where $F$ is an ultrafilter in $B$
and $a$ is any element such that $p(a) \in F$.
We have that $[a]_{F} \cap [b]_{F} \neq \emptyset$ implies that $[a]_{F} = [b]_{F}$.
\end{proposition}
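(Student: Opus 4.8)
The plan is to read this proposition straight off the two lemmas just proved, together with the observation, recorded just before Lemma~\ref{lem:l2}, that conjugacy over a fixed ultrafilter $F$ of $B$ is an equivalence relation on $p^{-1}(F)$. So I do not expect any genuine obstacle here; the work is really one of assembly.

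First I would establish the ``every ultrafilter has this form'' half. Take an arbitrary ultrafilter $G$ of $X$ and apply Lemma~\ref{lem:l3}: this yields an ultrafilter $F$ of $B$ with $p(G) \subseteq F$ and $G = [a]_{F}$ for every $a \in G$, and in particular $p(a) \in F$ for every such $a$. Conversely, starting from an ultrafilter $F$ of $B$ and any $a$ with $p(a) \in F$, Lemma~\ref{lem:l2} tells us that $[a]_{F}$ is indeed an ultrafilter of $X$ (and that $p([a]_{F}) = F$). Putting the two directions together gives the description of the ultrafilters of $X$ as precisely the sets $[a]_{F}$.

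For the final assertion I would argue directly: fix an ultrafilter $F$ of $B$, let $a,b \in p^{-1}(F)$, and suppose $c \in [a]_{F} \cap [b]_{F}$. Then $a \sim_{F} c$ and $b \sim_{F} c$, so by symmetry and transitivity of $\sim_{F}$ we get $a \sim_{F} b$, whence $[a]_{F} = [b]_{F}$. The only point to be careful about is that both classes in this statement are taken over the \emph{same} $F$, which is automatic since each $[a]_{F}$ is by definition a subset of $p^{-1}(F)$; with that understood, the claim is just the statement that distinct equivalence classes of $\sim_{F}$ are disjoint.
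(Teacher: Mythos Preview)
Your proposal is correct and matches the paper's approach exactly: the paper introduces this proposition with ``We summarize what we have found in the following'' and gives no separate proof, treating it as an immediate consequence of Lemmas~\ref{lem:l2} and~\ref{lem:l3} together with the fact that $\sim_{F}$ is an equivalence relation. Your assembly of these ingredients is precisely what is intended.
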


The following shows that ultrafilters in Boolean sets are `prime'.

\begin{lemma}\label{le: ultrafilters_are_prime}
Let $X\stackrel{p}{\twoheadrightarrow} B$ be a Boolean set 
and let $G$ be an ultrafilter in $X$.
If $x \sim y$ and $x \vee y \in G$ then either $x \in G$ or $y \in G$.
\end{lemma}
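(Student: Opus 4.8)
The plan is to use Proposition~\ref{prop: ultrafilters_in_Boolean_sets} to reduce the statement to the corresponding fact about prime filters in the Boolean algebra $B$, which is part~(2) of Proposition~\ref{prop: properties_of_Boolean_algebras}. So first I would fix an ultrafilter $G$ in $X$, elements $x \sim y$ with $x \vee y \in G$, and let $F$ be the ultrafilter in $B$ with $G = [a]_F$ for every $a \in G$ (Proposition~\ref{prop: ultrafilters_in_Boolean_sets}); in particular $p(G) \subseteq F$ and $G = p^{-1}(F) \cap (\text{the conjugacy class of any }a\in G)$. Since $x \vee y \in G$ we have $p(x \vee y) \in F$, and by the remark following the definition of a Boolean set, $p(x \vee y) = p(x) \vee p(y)$. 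As $F$ is a prime filter (Proposition~\ref{prop: properties_of_Boolean_algebras}(2)), either $p(x) \in F$ or $p(y) \in F$; without loss of generality say $p(x) \in F$.

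Now the task is to upgrade $p(x) \in F$ to $x \in G$. Set $z = (x \vee y)|^{p(x \vee y)}_{p(x)}$, the restriction of the element $x \vee y \in G$ down to $p(x)$. Since $x \leq x \vee y$ and $p(x) \leq p(x \vee y)$, by Corollary~\ref{cor: equality} (or directly from \eqref{eq:ord}) we get $z = x$. Thus $x = z = (x \vee y)|^{p(x\vee y)}_{p(x)} \leq x \vee y$, so $x$ lies below an element of $G$ and $p(x) \in F$. Because $G$ is upwardly closed and $x \leq x \vee y \in G$, we conclude $x \in G$ immediately --- in fact this is the whole point, and the conjugacy-class description is only needed to justify that $G = [a]_F$ behaves correctly, which we already have. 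So the argument collapses: $x = (x\vee y)|^{p(x\vee y)}_{p(x)} \leq x \vee y \in G$ and $G$ upwardly closed give $x \in G$.

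The one real step of content is producing, from $x \vee y \in G$ and primeness of $F$, an element of $G$ that sits above $x$ (or above $y$); everything else is bookkeeping with the partial order and the fact that $p$ reflects it. The main obstacle I anticipate is making sure the case distinction is genuinely exhaustive: we need $p(x \vee y) = p(x) \vee p(y)$, which requires $x \sim y$ and $\exists\, x \vee y$ --- both hypotheses are in hand ($x \sim y$ is assumed, and $x \vee y \in G$ exists) --- together with the remark that identifies $x \sim y$ with $\exists\, x \vee y$ in a Boolean set and computes the image of the join. Once $p(x) \vee p(y) \in F$ and $F$ is prime, the two subcases are symmetric, so handling one and invoking symmetry suffices. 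I would also double-check that $F$ is genuinely an ultrafilter (hence prime) and not merely a proper filter containing $p(G)$; this is exactly what Lemma~\ref{lem:l3} guarantees.

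Thus the proof is short: invoke Proposition~\ref{prop: ultrafilters_in_Boolean_sets} to get $F$ with $G = [a]_F$; note $p(x) \vee p(y) = p(x \vee y) \in F$; use primeness of $F$ (Proposition~\ref{prop: properties_of_Boolean_algebras}(2)) to assume $p(x) \in F$; observe $x \leq x \vee y$ with $x \vee y \in G$; conclude $x \in G$ since $G$ is upwardly closed.
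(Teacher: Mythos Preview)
Your overall strategy matches the paper's: write $G=[x\vee y]_{F}$ via Proposition~\ref{prop: ultrafilters_in_Boolean_sets}, push $p(x\vee y)=p(x)\vee p(y)$ into the ultrafilter $F$, use primeness of $F$ to get (say) $p(x)\in F$, and then argue $x\in G$. The problem is the very last inference. You write that ``$G$ is upwardly closed and $x\leq x\vee y\in G$, so $x\in G$.'' Upward closure says: if $a\in G$ and $a\leq b$ then $b\in G$. Here you have $x\leq x\vee y$ with $x\vee y\in G$; that is, $x$ lies \emph{below} an element of $G$, not above one, so upward closure gives nothing. Filters are not downward closed, and indeed many elements below $x\vee y$ will fail to lie in $G$.

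What actually does the work is exactly the conjugacy-class description you set up and then dismissed. From $x\leq x\vee y$ and $p(x)\in F$ you have $x\in p^{-1}(F)$ and $x\leq x,\,x\vee y$, so $x\sim_{F} (x\vee y)$ by the definition of $\sim_F$ (take $c=x$). Hence $x\in[x\vee y]_{F}=G$. This is precisely what the paper means when it says ``it is immediate that $x\in G=[x\vee y]_{F}$.'' So your proof becomes correct once you replace the appeal to upward closure by this one-line use of the definition of $[\,\cdot\,]_{F}$; the restriction $z=(x\vee y)|^{p(x\vee y)}_{p(x)}$ and Corollary~\ref{cor: equality} are then unnecessary, since $x\leq x\vee y$ is already clear.
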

\begin{proof} By our results above, we may write $G = [x \vee y]_{F}$ where $F$ is an ultrafilter in $B$.
Since $x \vee y \in G$ we have that $p(x) \vee p(y) \in F$ by Lemma~\ref{le: boolean_set_property}.
By Proposition~\ref{prop: properties_of_Boolean_algebras} we have that $p(x) \in F$ or $p(y) \in F$.
Suppose that $p(x) \in F$.
It is immediate that $x \in G = [x \vee y]_{F}$, as required.
\end{proof}

It follows from Lemma \ref{lem:l2} and Lemma \ref{lem:l3} 
that the assignment $[a]_{F} \mapsto F$ defines a surjective map $\widetilde{p} \colon X^{\ast}\to B^{\ast}$. 
If $G\in X^{\ast}$ and $\widetilde{p}(G)=F$ 
then we will say 
that $G$ is {\em over} $F$ and $F$ {\em underlies} $G$. 

We now topologize the set $X^{\ast}$. 
Define 
$$L(a)=\{F\in X^{\ast}\colon a\in F\},$$
where $a$ runs through $X$. 

\begin{lemma}\label{le: properties_of_topology} 
Let $X\stackrel{p}{\twoheadrightarrow} B$ be a Boolean set.
\begin{enumerate}

\item The sets $L(a)$ form a base for a topology on $X$.

\item If $a \wedge b$ exists then $L(a \wedge b) = L(a) \cap L(b)$.

\item $L(a) = L(b)$ if and only if $a = b$.

\item $L(a) \subseteq L(b)$ if and only if $a \leq b$.

\item $L(a)$ is a local section.

\item If $ a \sim b$ then $L(a) \cup L(b) = L(a \vee b)$.

\item If $L(a) \cup L(b)$ is a local section then $a \sim b$.

\item $\widetilde{p}(L(a))=M(p( a))$ for each $a\in X$.

\item The set $L(a)$ is compact for each $ a\in X$.

\item Each compact-open local section of $X^{\ast}$ is of the form $L(a)$ for some $a \in X$.

\end{enumerate}
\end{lemma}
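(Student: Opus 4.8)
The plan is to establish the ten items roughly in the order given, since several later parts lean on earlier ones. For (1), I would verify the base condition: given $F \in L(a) \cap L(b)$, i.e. $a, b \in F$, use the fact that $F$ is an ultrafilter in $X$, hence down directed, to find $c \in F$ with $c \leq a, b$; then $F \in L(c) \subseteq L(a) \cap L(b)$. (Covering all of $X^{\ast}$ follows since every ultrafilter contains some element — here one should note $X$ has a least element $0$ which is \emph{not} in any proper filter, but every ultrafilter is nonempty, so $\bigcup_a L(a) = X^{\ast}$.) For (2), if $a \wedge b$ exists and $F$ contains both $a$ and $b$, then $F$ down directed gives $c \leq a, b$, hence $c \leq a \wedge b$ by the defining property of meet, so $a \wedge b \in F$; the reverse inclusion is clear since $a \wedge b \leq a, b$ and filters are upwardly closed.

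For (3) and (4): one direction of (4) is obvious (filters are upwardly closed), and (3) is the antisymmetric consequence, so it suffices to prove that $L(a) \subseteq L(b)$ forces $a \leq b$. I would argue by contraposition using Proposition~\ref{prop: ultrafilters_in_Boolean_sets} and Proposition~\ref{prop: properties_of_Boolean_algebras}: if $a \not\leq b$, then consider $a \circ b = b\vert^{p(b)}_{p(a) \wedge p(b)}$ and the element $a \backslash (a \circ b) \leq a$; this element is nonzero precisely when $a \not\leq b$, so $p$ of it is a nonzero element of $B$, which lies in some ultrafilter $F_0$ of $B$ disjoint from an ideal witnessing its not being below $p(b)$ — more cleanly, pick an ultrafilter $F$ of $B$ containing $p(a \backslash b)$ (where $a \backslash b$ is the relative complement in the skew Boolean structure of Section 2), then $[a]_F$ contains $a$ but, since $(a \backslash b) \wedge b = 0$ forces any common lower bound of $a$ and $b$ in $p^{-1}(F)$ to have support meeting that of $a\backslash b$ trivially, one checks $b \notin [a]_F$. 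This part will take a little care. Items (5), (6), (7) then follow: (5) because any two elements of $L(a)$ lie over distinct ultrafilters of $B$ by Proposition~\ref{prop: ultrafilters_in_Boolean_sets} so $\widetilde{p}$ is injective on $L(a)$; (6) is exactly Lemma~\ref{le: ultrafilters_are_prime} together with the easy inclusion $L(a), L(b) \subseteq L(a \vee b)$; (7) because if $L(a) \cup L(b)$ is a local section then $a, b$ cannot lie over the same ultrafilter of $B$ unless equal, and a short argument with Proposition~\ref{prop: properties_of_Boolean_algebras}(6) applied to $p(a), p(b)$ shows $a \wedge b$ must exist with the right support, i.e. $a \sim b$.

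For (8), $\widetilde{p}(L(a)) = \{p([c]_F) : c \in [c]_F, a \in [c]_F\}$; by Lemma~\ref{lem:l2}, $F \in \widetilde{p}(L(a))$ iff there is an ultrafilter of $X$ over $F$ containing $a$, and by Lemma~\ref{lem:l2} again such a filter (namely $[a]_F$) exists precisely when $p(a) \in F$, which is $M(p(a))$. For (9): $L(a) \to M(p(a))$ is a continuous bijection by (5) and (8), and one shows it is a homeomorphism (or directly that $L(a)$ is compact) by noting that a basic open cover of $L(a)$ by sets $L(c_i)$ pushes down via $\widetilde p$ to an open cover of the compact set $M(p(a))$ in $B^{\ast}$, extract a finite subcover, and use that $\widetilde p$ restricted to $L(a)$ is injective to pull it back — this mirrors the proof of Lemma~\ref{le: l2}(2). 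Finally (10): given a compact-open local section $C$ of $X^{\ast}$, write $C = \bigcup_i L(a_i)$ as a union of basic opens; by compactness finitely many suffice, $C = L(a_1) \cup \cdots \cup L(a_n)$; since $C$ is a local section, each pairwise union $L(a_i) \cup L(a_j)$ is a local section, so $a_i \sim a_j$ by (7), hence the join $a_1 \vee \cdots \vee a_n$ exists in the Boolean set, and by (6) applied inductively $C = L(a_1 \vee \cdots \vee a_n)$. The main obstacle is item (4) — pinning down the element of $B$ to feed into Proposition~\ref{prop: properties_of_Boolean_algebras} to separate $a$ from $b$ and verifying that the resulting ultrafilter $[a]_F$ of $X$ genuinely omits $b$; everything else is bookkeeping with the lemmas already in hand.
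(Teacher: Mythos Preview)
Your approach to (1), (2), (5), (6), (8), (9), (10) matches the paper's and is fine. The real problem is with (3)/(4), and it is a genuine gap rather than just bookkeeping.

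You propose to prove (4) by contraposition: if $a \not\leq b$, produce an ultrafilter $F$ of $B$ containing $p(a\backslash b)$ (equivalently $p\bigl(a\backslash(a\circ b)\bigr)$), and argue $[a]_F \in L(a)\setminus L(b)$. But your claim that ``this element is nonzero precisely when $a\not\leq b$'' is false. In the skew Boolean structure, $a\backslash b$ is the complement of $b\circ a = a\vert^{p(a)}_{p(a)\wedge p(b)}$ in $a^{\downarrow}$, so $p(a\backslash b)=p(a)\backslash p(b)$. Hence $a\backslash b=0$ exactly when $p(a)\leq p(b)$, \emph{regardless} of whether $a\leq b$. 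Concretely, take any fibre $X_e$ with two distinct elements $a\neq b$; then $p(a)=p(b)$, $a\not\leq b$, yet $a\backslash b=0$, and your construction yields no ultrafilter at all. The relative complement only detects failure of $p(a)\leq p(b)$, not failure of $a\leq b$.

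The paper handles this by reversing your order: it proves (3) first, via a separation argument that works at the level of $B$. Assuming $L(a)=L(b)$, one first forces $p(a)=p(b)$ (easy, via Proposition~\ref{prop: properties_of_Boolean_algebras}(6)). Then one sets $C=\{c:c\leq a,b\}$, shows $p(C)$ is an ideal of $B$, observes $p(a)\notin p(C)$ (else $a=b$), and invokes Proposition~\ref{prop: properties_of_Boolean_algebras}(5) to get an ultrafilter $F$ with $p(a)\in F$ and $F\cap p(C)=\emptyset$; then $[a]_F\in L(a)$ but $b\notin[a]_F$. With (3) in hand, (4) reduces to showing $L(a)=L(a\circ b)$ whenever $L(a)\subseteq L(b)$, which is a short direct calculation. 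Similarly, for (7) the paper shows $L(a\circ b)=L(b\circ a)$ and applies (3); your sketch for (7) is too vague as written and would likely need to route through the same idea.
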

\begin{proof} (1). Let $a,b\in X$. 
It is enough to verify that $L(a)\cap L(b)$ can be written as a union of the sets $L(c)$, $c\in X$. 
Let $F\in L(a)\cap L(b)$. 
Then $a,b\in F$ and, since $F$ is down directed, there is $c \in F$ such that $c \leq a,b$. 
Thus $F \in L(c)$.
It is now clear that  
$$L(a)\cap L(b)= \bigcup_{0 \neq c \leq a,b} L(c).$$

(2). Straightforward.

(3). Let $L(a) = L(b)$.
Suppose first that $p(a) \neq p(b)$.
Then by Proposition~\ref{prop: properties_of_Boolean_algebras}, 
we may find an ultrafilter $F$ in $B$ such that $p(a) \in F$ and $p(b) \notin F$.
It follows that $a \in [a]_{F} = G$ but that $b \notin [a]_{F}$.
Thus $G \in L(a)$ but $G \notin L(b)$, which is a contradiction.
It follows that $p(a) = p(b)$.

Put $C = \{c \in X \colon c \leq a, b\}$.
We show that $p(C)$ is an ideal in the Boolean algebra $B$.
Because the function $p$ reflects the partial order, we have that $p(C)$ is an order ideal of $B$.
Now let $d_{1},d_{2} \in p(C)$.
Let $x_{1},x_{2} \in C$ such that $p(x_{1}) = d_{1}$ and $p(x_{2}) = d_{2}$. 
We have that $x_{1},x_{2} \leq a,b$ and so by Lemma~\ref{le: order_compatibility}, $x_{1} \sim x_{2}$.
It follows that $x_{1} \vee x_{2}$ exists and clearly $x_{1} \vee x_{2} \in C$.
Finally, $p(x_{1} \vee x_{2}) = d_{1} \vee d_{2}$ by Lemma~\ref{le: boolean_set_property} as required.

Next observe that we cannot have $p(a) = p(b) \in p(C)$ because then we would have $a = b$.
Therefore $p(a) \notin p(C)$.

By Proposition~\ref{prop: properties_of_Boolean_algebras}, 
there is an ultrafilter $F$ in the Boolean algebra $B$ such that $p(C) \cap F = \emptyset$.
Consider the ultrafilter $G = [a]_{F}$.
Suppose that $b \in G$.
Then there is some $c \leq a,b$ such that $p(c) \in F$.
But $c \in C$ and $p(c) \in p(C)$. Since $p(C)$ and $F$ are disjoint, 
this is a contradiction.
It follows that $a = b$, as required.

(4). $a\leq b$ obviously implies $L(a)\subseteq L(b)$. We now prove the reverse implication.
By (3) above, it is only necessary to prove that $L(a) = L(a \circ b)$.
Let $G\in L(a)$. Then $G\in L(b)$. So $a,b\in G$ and thus $a\circ b\in G$. It follows that
$G\in L(a\circ b)$. We have shown that $L(a) \subseteq L(a \circ b)$.
Let $G \in L(a \circ b)$.
Then $G = [a \circ b]_{p(G)}$.
But $b \in G$ and so $G = [b]_{p(G)}$.
Observe that $p(a) \wedge p(b) \in p(G)$.
Thus $p(a) \in p(G)$.
It follows that we may form the ultrafilter $[a]_{p(G)}$.
But this must contain $b$ and so $[a]_{p(G)} = [b]_{p(G)}$.
It follows that $G \in L(a)$.
We have therefore shown that $L(a) = L(a \circ b)$ and so $a = a \circ b$ giving $a \leq b$,
as required.

(5). Suppose that $G_{1},G_{2} \in L(a)$ are such that  
$\widetilde{p}(G_{1}) = \widetilde{p}(G_{2})$.
By assumption, $a \in G_{1} \cap G_{2}$.
It follows by Proposition~\ref{prop: ultrafilters_in_Boolean_sets}, that $G_{1} = G_{2}$.

(6). Clearly $L(a) \cup L(b) \subseteq L(a \vee b)$.
Let $G$ be an ultrafilter such that $a \vee b \in G$.
Then we use Lemma~\ref{le: ultrafilters_are_prime} and deduce that either $a \in G$ or $b \in G$.

(7). It is enough to prove that $L(a \circ b) = L(b \circ a)$
since then by (3), we would have $a \circ b = b \circ a$ and so from Lemma~\ref{le: order_compatibility} 
we would have $a \sim b$.
By symmetry, it is enough to prove that $L(a \circ b) \subseteq L(b \circ a)$.
Let $G \in L(a \circ b)$.
Then $b \in G$ and $p(a) \wedge p(b) \in p(G)$.
It follows that $p(a) \in p(G)$.
Thus $a \in [a]_{P(G)} = G'$ an ultrafilter.
Now $G,G' \in L(a) \cup L(b)$ and $\widetilde{p}(G) = \widetilde{p}(G')$.
Thus by assumptuion $G = G'$.
It follows that $a \in G$ and so $b \circ a \in G$.
We have shown that $G \in L(b \circ a)$, as required.

(8). Straightforward.

(9). This follows easily by (8), Lemma~\ref{lem:l3} and the compactness of the sets $M(p(a))$, $a\in X$. 

(10). Each compact-open local section can be covered by finite number of the sets $L(a)$. 
The result now follow by (6) and (7).
\end{proof}

We may now prove the following.

\begin{proposition}\label{prop:p2}  
$\widetilde{p} \colon X^{\ast} \rightarrow B^{\ast}$ is an \'{e}tal\'e space.
\end{proposition}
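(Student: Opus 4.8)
The plan is to verify the two defining properties of an \'{e}tal\'{e} space over a Boolean space, namely that $X^{\ast}$ and $B^{\ast}$ are Boolean spaces and that $\widetilde{p}$ is a surjective local homeomorphism, with most of the work already encapsulated in Lemma~\ref{le: properties_of_topology}. First I would recall that $B^{\ast}$ is a Boolean space by classical Stone duality (Theorem~\ref{the: stone_duality}), with the sets $M(b)$, $b\in B$, forming a basis of compact-open sets. Surjectivity of $\widetilde{p}$ was already recorded (it follows from Lemma~\ref{lem:l2} and Lemma~\ref{lem:l3}), so the substance is: (i) $X^{\ast}$ with the topology generated by the $L(a)$ is a Boolean space; (ii) $\widetilde{p}$ is continuous and open; and (iii) $\widetilde{p}$ is a local homeomorphism.

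For continuity and openness, I would argue directly from part~(8) of Lemma~\ref{le: properties_of_topology}: since $\widetilde{p}(L(a))=M(p(a))$ and the $M(b)$ form a basis of $B^{\ast}$, the map $\widetilde{p}$ sends basic open sets to basic open sets, hence is open; for continuity one checks that $\widetilde{p}^{-1}(M(b))$ is open, which follows because an ultrafilter $G=[a]_F$ lies in $\widetilde{p}^{-1}(M(b))$ iff $b\in F$ iff (choosing $x\leq$ any element of $G$ with $p(x)=b\wedge(\text{that element})$, possible since $p$ reflects the order) $G\in L(x)$ for a suitable $x$, so $\widetilde{p}^{-1}(M(b))$ is a union of sets $L(x)$. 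For the local homeomorphism property, the key observation is Lemma~\ref{le: properties_of_topology}(5): each $L(a)$ is a local section, i.e. $\widetilde{p}$ restricted to $L(a)$ is injective; combined with openness and continuity this gives that $\widetilde{p}\colon L(a)\to M(p(a))$ is a homeomorphism, and since every point $G$ of $X^{\ast}$ lies in some $L(a)$ (take any $a\in G$), $\widetilde{p}$ is a local homeomorphism.

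It then remains to see that $X^{\ast}$ is itself a Boolean space. For the Hausdorff property: given $G_1\neq G_2$ in $X^{\ast}$, either $\widetilde{p}(G_1)\neq\widetilde{p}(G_2)$, in which case disjoint compact-open neighbourhoods in $B^{\ast}$ pull back through the local homeomorphism to disjoint neighbourhoods; or $\widetilde{p}(G_1)=\widetilde{p}(G_2)=F$ but $G_1=[a_1]_F\neq[b]_F=G_2$, in which case by Proposition~\ref{prop: ultrafilters_in_Boolean_sets} we have $L(a_1)\cap L(b)=\emptyset$ (they cannot share a point) while $G_i\in L(a_i)$, giving the separation. A basis of compact-open sets is provided by the $L(a)$ themselves, which are open by definition and compact by Lemma~\ref{le: properties_of_topology}(9).

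The main obstacle, or rather the point demanding the most care, is the continuity of $\widetilde{p}$, since this is the one assertion not handed to us verbatim by Lemma~\ref{le: properties_of_topology}; the argument that $\widetilde{p}^{-1}(M(b))$ is open requires using that $p$ reflects the partial order to manufacture, for each $G$ over an ultrafilter $F$ containing $b$, an element $x\leq g$ (for $g\in G$) with $p(x)=b\wedge p(g)\in F$, whence $G=[x]_F\in L(x)\subseteq\widetilde{p}^{-1}(M(b))$. Everything else is a matter of assembling the already-proved parts of Lemma~\ref{le: properties_of_topology} and invoking classical Stone duality for $B^{\ast}$.
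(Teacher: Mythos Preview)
Your argument for the local homeomorphism is sound and close in spirit to the paper's: the paper proves directly that $\widetilde{p}$ restricted to $L(a)$ sets up a bijection between basic opens of $L(a)$ and of $M(p(a))$, whereas you first establish global continuity and openness and then restrict. Both routes work, and your continuity argument via $x=g|^{p(g)}_{b\wedge p(g)}$ is exactly the restriction the paper uses in the converse direction.

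However, there is a genuine error in your plan. You have misread the definition of an \'etal\'e space: only the \emph{base} space is required to be Boolean, while the total space $E$ is merely a topological space. So there is no need to show that $X^{\ast}$ is a Boolean space, and in fact you cannot: by Proposition~\ref{prop13} the dual space $X^{\ast}$ is Hausdorff \emph{if and only if} the Boolean set $X$ has binary meets, which is not assumed here. Your separation argument in the fibre case is accordingly flawed: from $G_1=[a_1]_F\neq[b]_F=G_2$ you conclude $L(a_1)\cap L(b)=\emptyset$, but Proposition~\ref{prop: ultrafilters_in_Boolean_sets} only tells you that the two ultrafilters $[a_1]_F$ and $[b]_F$ are disjoint as subsets of $X$; the open sets $L(a_1)$ and $L(b)$ in $X^{\ast}$ may well meet at an ultrafilter lying over some other $F'\neq F$. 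Simply delete the paragraph about $X^{\ast}$ being a Boolean space and your proof goes through.
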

\begin{proof} The proof amounts to verifying that $\widetilde{p}$ is a local homeomorphism.
Let $F\in X^{\ast}$. 
Consider any $a\in X$ such $F\in L(a)$. 
We shall show that the set $L(a)$ is homeomorphic to its $\widetilde{p}$-image $M(p(a))$. 
The restriction of $\widetilde{p}$ to $L(a)$ is a bijection between $L(a)$ and $M(p(a))$
by Proposition~\ref{prop: ultrafilters_in_Boolean_sets}. 
Therefore we need only show that $\widetilde{p}$ establishes a bijection between basic opens in $L(a)$ and $M(p(a))$. 
If $A\subseteq L(a)$ is a basic compact-open set then applying Lemma~\ref{le: properties_of_topology} 
it follows that $A=L(b)$ where $b\leq a$. 
The set $\widetilde{p}(L(b))= M(p(b))$ is a basic open contained in $M(p(a))$. 
Conversely, if $A\subseteq M(p(a))$ is a basic open then $A=M(c)$ for some $c\in B$ 
such that $c\leq p(a)$. Since $a|^{p(a)}_c\leq a$ then $L(a|^{p(a)}_c)\subseteq L(a)$. We also have that $\widetilde{p}(L(a|^{p(a)}_c))=M(c)$. 
This completes the proof.
\end{proof}

We call the \'{e}tal\'e space $X^{\ast} \stackrel{\widetilde{p}}\rightarrow B^{\ast}$ the {\em dual} of the Boolean set $X\stackrel{p}{\twoheadrightarrow} B$.

\subsection{Correspondence for morphisms}

\begin{lemma} \label{lem:l5} 
Let $(E,p,X)$ and $(F,q,Y)$ be \'{e}tal\'e spaces and let $\varphi \colon (E,p,X)\to (F,q,Y)$ be a proper continuous relational covering morphism. 
Then $\varphi^{-1}$ induces a morphism, $\hat{\varphi}$, of Boolean sets from $F^{\ast}$ to $E^{\ast}$.
\end{lemma}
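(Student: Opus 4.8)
The plan is to unpack what the morphism data $(\varphi,\overline{\varphi})$ gives us and show it assembles into a morphism of Boolean sets $(\hat{\varphi},\widehat{\overline{\varphi}})$, where on base spaces $\widehat{\overline{\varphi}} = \overline{\varphi}^{-1}$ in the sense of the standard Stone-duality functor: a compact-open $C\subseteq Y$ is sent to $\overline{\varphi}^{-1}(C)\subseteq X$, which is compact-open because $\overline{\varphi}$ is a proper continuous map by Lemma~\ref{lem:l1}. So the underlying Boolean algebra morphism is the usual one, and the content is to define $\hat{\varphi} \colon F^{\ast}\to E^{\ast}$ on compact-open local sections and check (BM1) and (BM2).

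First I would define, for a compact-open local section $A$ of $F$, the set $\hat{\varphi}(A) = \varphi^{-1}(A) = \{e\in E \colon \varphi(e)\cap A \neq \emptyset\}$. The first key step is to verify that $\hat{\varphi}(A)$ is again a compact-open local section of $E$. Openness is exactly the continuity hypothesis on $\varphi$. Compactness should follow from properness together with the fact that $\hat{\varphi}(A)$ maps onto (a subset of) $\overline{\varphi}^{-1}(q(A))$, using Lemma~\ref{le: l2}(2): one shows $p(\hat{\varphi}(A)) = \overline{\varphi}^{-1}(p(A))$ — the inclusion $\subseteq$ is the compatibility condition $\varphi(e)\subseteq F_{\overline{\varphi}(p(e))}$ in the definition of relational morphism, and $\supseteq$ is local surjectivity. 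Being a local section — i.e.\ $p$ injective on $\hat{\varphi}(A)$ — is precisely local injectivity of $\varphi$ combined with $A$ being a local section: if $e,e'\in\hat{\varphi}(A)$ with $p(e)=p(e')$, pick $y\in\varphi(e)\cap A$ and $y'\in\varphi(e')\cap A$; then $q(y)=\overline{\varphi}(p(e))=\overline{\varphi}(p(e'))=q(y')$, so $y=y'$ since $A$ is a section, whence $\varphi(e)\cap\varphi(e')\neq\emptyset$ and local injectivity gives $e=e'$. Once $\hat{\varphi}(A)\in E^{\ast}$ is established, (BM1) is the identity $p\circ\hat{\varphi} = \widehat{\overline{\varphi}}\circ q$ on $F^{\ast}$, which is exactly the equation $p(\hat{\varphi}(A)) = \overline{\varphi}^{-1}(p(A))$ just proved.

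For (BM2) I would take compact-open sets $A\supseteq B$ in $Y$ and a local section $C\in F(A)$, and check $\hat{\varphi}(C\cap q^{-1}(B)) = \hat{\varphi}(C)\cap p^{-1}(\overline{\varphi}^{-1}(B))$. An element $e$ lies in the left side iff $\varphi(e)$ meets $C\cap q^{-1}(B)$; since $\varphi(e)$ is contained in a single stalk $F_{\overline{\varphi}(p(e))}$ and $C$ is a section meeting that stalk in at most one point, $\varphi(e)$ meets $C\cap q^{-1}(B)$ iff $\varphi(e)$ meets $C$ and that common point lies over $B$, i.e.\ $\overline{\varphi}(p(e))\in B$, i.e.\ $e\in\hat{\varphi}(C)$ and $p(e)\in\overline{\varphi}^{-1}(B)$. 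This is exactly the right side. Finally I would note $\hat{\varphi}$ is well-defined as a map (no further functoriality needed here) and remark that morphisms of skew Boolean algebras / Boolean sets only require (BM1) and (BM2), both now verified.

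The main obstacle I anticipate is the compactness of $\hat{\varphi}(A) = \varphi^{-1}(A)$: continuity only gives openness, so one genuinely needs the interplay of properness of $\varphi$, the stalk-compatibility and local surjectivity/injectivity to pin down $p(\varphi^{-1}(A))$ as the compact-open set $\overline{\varphi}^{-1}(p(A))$ and then invoke Lemma~\ref{le: l2}(2); the set-theoretic bookkeeping that $\varphi^{-1}(A)$ really is a section (rather than merely having section-like image) is where local injectivity must be used carefully. Everything else is a routine unwinding of the definitions of relational covering morphism and of morphism of Boolean sets.
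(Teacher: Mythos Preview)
Your proposal is correct and follows essentially the same strategy as the paper's proof: define $\hat{\varphi}$ as $\varphi^{-1}$, check it lands in $E^{\ast}$, and verify (BM1) and (BM2). Two minor execution differences are worth noting. First, the paper obtains compactness of $\varphi^{-1}(A)$ in one stroke from properness of $\varphi$ itself (since $A$ is compact), rather than your detour through $p(\varphi^{-1}(A))=\overline{\varphi}^{-1}(q(A))$ and Lemma~\ref{le: l2}(2); your route works but is longer, and note the small slip that the projection of $A\in F^{\ast}$ should be written $q(A)$, not $p(A)$. Second, for (BM2) the paper argues more abstractly: $x\geq x|^{a}_{b}$ gives $\varphi^{-1}(x)\geq\varphi^{-1}(x|^{a}_{b})$, and since (BM1) forces $\widetilde{p}(\varphi^{-1}(x|^{a}_{b}))=\overline{\varphi}^{-1}(b)$, uniqueness of restrictions (Corollary~\ref{cor: equality}) yields the equality; your element-chase is equivalent and arguably more transparent.
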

\begin{proof} Let $x\in F^{\ast}$ be a compact-open local section.
Since $\varphi$ is locally injective then the restriction of the map $p$ to $\varphi^{-1}(x)$ is injective
and since $\varphi$ is proper and continuous then $\varphi^{-1}(x)$ is compact-open. 
It follows that the map $\hat{\varphi}: F^{\ast}\to E^{\ast}$ given by $x\mapsto \varphi^{-1}(x)$  is well-defined.
By Lemma~\ref{lem:l1} the map $\overline{\varphi} \colon X\to Y$ is proper and continuous. 
Therefore, $\overline{\varphi}^{-1}$ induces a homomorphism of Boolean algebras from $Y^{\ast}$ to $X^{\ast}$. 
We denote it by $\overline{\hat{\varphi}}$.  

(BM1) holds. 
That is, given $x\in Y^{\ast}$ we verify that
$\widetilde{p}(\hat{\varphi}(x))=\overline{\hat{\varphi}}(\widetilde{q}(x))$.  
This equality clearly holds when $x = \emptyset$
so we may assume that $x \neq \emptyset$.
In the case $\varphi^{-1}(x)= \emptyset$,
we have $\widetilde{p}(\varphi^{-1}(x))=0$ and also $\overline{\varphi}^{-1}(\widetilde{q}(x))=0$. 
Assume that  $\varphi^{-1}(x) \neq \emptyset$. 
Then the required equality follows from $p(\varphi^{-1}(x))=\overline{\varphi}^{-1}(q(x))$ 
that holds by the construction of $\overline{\varphi}$ applying local surjectivity of $\varphi$.

(BM2) holds. 
Let $a,b\in Y^{\ast}$, $a\geq b$ and $x\in F^{\ast}_a$. 
We have to show that 
$\varphi^{-1}(x|^a_b)=\varphi^{-1}(x)|^{\overline{\varphi}^{-1}(a)}_{\overline{\varphi}^{-1}(b)}$. 
Since $x\geq x|^a_b$ then $\varphi^{-1}(x)\geq \varphi^{-1}(x|^a_b)$. By (BM1) we have that
$\widetilde{p}(\varphi^{-1}(x|^a_b))=\overline{\varphi}^{-1}(b)$
and the required equality follows.
\end{proof}

\begin{lemma}\label{lem:l6} 
Let $X\stackrel{p}{\twoheadrightarrow} B_1$ and $Y\stackrel{q}{\twoheadrightarrow} B_2$ be Boolean sets and let $\varphi: X\to Y$ 
be a morphism of Boolean sets. 
Then $\varphi^{-1}$ induces a proper continuous relational covering morphism, $\hat{\varphi}$, from $Y^{\ast}$ to $X^{\ast}$.
\end{lemma}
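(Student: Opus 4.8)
The plan is to dualize Lemma~\ref{lem:l5}: there we started from a morphism of \'etale spaces and produced a morphism of Boolean sets by taking preimages; here we go the other way, starting from a morphism $\varphi \colon X \to Y$ of Boolean sets and building a relational morphism $\hat\varphi \colon Y^{\ast} \to X^{\ast}$ of the associated \'etale spaces (recall $Y^{\ast}, X^{\ast}$ are the spaces of ultrafilters, fibred over $B_2^{\ast}, B_1^{\ast}$ respectively). Since morphisms of \'etale spaces are relational (power-set valued), the natural guess is that for an ultrafilter $G \in Y^{\ast}$ we set $\hat\varphi(G) = \{H \in X^{\ast} \colon \varphi^{-1}(G) \subseteq H\}$ — or, more usefully, we first pass to the underlying Boolean-algebra morphism $\overline\varphi \colon B_1 \to B_2$, which by classical Stone duality (Theorem~\ref{the: stone_duality}) induces a proper continuous map $\overline{\hat\varphi} \colon B_2^{\ast} \to B_1^{\ast}$ on ultrafilter spaces via $F \mapsto \overline\varphi^{-1}(F)$, and then we define $\hat\varphi(G)$ to be the set of those ultrafilters of $X$ that lie over $\overline\varphi^{-1}(\widetilde q(G))$ and are compatible with $\varphi^{-1}(G)$ in the appropriate sense. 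The cleanest formulation is: $\hat\varphi(G) = \{ H \in X^{\ast} \colon \widetilde p(H) = \overline\varphi^{-1}(\widetilde q(G)) \text{ and } \varphi^{-1}(a) \in H \text{ for all } a \in G\}$, using the description of ultrafilters in Boolean sets from Proposition~\ref{prop: ultrafilters_in_Boolean_sets}.

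Once the map $\hat\varphi$ is defined, the steps are largely bookkeeping against the definitions in the \'etale-space section. First I would check that $\hat\varphi$ really is a relational morphism over $\overline{\hat\varphi}$, i.e.\ that $\hat\varphi(G) \subseteq X^{\ast}_{\overline{\hat\varphi}(\widetilde q(G))}$ — this is immediate from the stalk condition built into the definition. Next, local injectivity: if $H_1, H_2 \in \hat\varphi(G)$ lie over the same point of $B_1^{\ast}$ and $\hat\varphi(G_1) \cap \hat\varphi(G_2) \neq \emptyset$ for $G_1, G_2$ over the same ultrafilter of $B_2$, one uses Proposition~\ref{prop: ultrafilters_in_Boolean_sets} (two conjugate ultrafilters of a Boolean set that share an element coincide) to force $G_1 = G_2$. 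Local surjectivity: given $H \in X^{\ast}$ with $\widetilde p(H) = \overline{\hat\varphi}(F)$ for some $F \in B_2^{\ast}$, one must produce $G \in Y^{\ast}$ over $F$ with $H \in \hat\varphi(G)$ — here the pushforward $\varphi(H)$ generates a filter in $Y$ (using Lemma~\ref{BM3} that $\varphi$ preserves compatible joins, together with the fact that $\varphi$ preserves $\circ$ and hence down-directedness), and one extends it to an ultrafilter $G$ over $F$, checking $\varphi^{-1}(a) \in H$ for $a \in G$ by a conjugacy argument as in Lemma~\ref{lem:l2} and Lemma~\ref{lem:l3}.

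It then remains to verify continuity and properness of $\hat\varphi$. For continuity, one checks that the preimage under $\hat\varphi$ of a basic open $L(b)$ in $X^{\ast}$ (for $b \in X$, in the notation of Lemma~\ref{le: properties_of_topology}) is open in $Y^{\ast}$; the expected identity is something like $\hat\varphi^{-1}(L(b)) = \bigcup \{ L(c) \colon c \in X,\ \varphi(c) \geq \text{something}\}$, or more directly that $\hat\varphi^{-1}(L(b))$ is the set of $G$ with $\varphi^{-1}(p^{-1}(\widetilde p(L(b)))\cap\cdots)\in$, and one reduces this to the continuity of $\overline{\hat\varphi}$ (already known from Stone duality) fibrewise via Lemma~\ref{le: properties_of_topology}(5),(8). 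Properness of $\hat\varphi$ follows from Lemma~\ref{le: properties_of_topology}(9) — each $L(b)$ is compact — combined with properness of $\overline{\hat\varphi}$ and the local-homeomorphism structure established in Proposition~\ref{prop:p2}: a compact-open local section in $X^{\ast}$ is some $L(b)$ by Lemma~\ref{le: properties_of_topology}(10), its image under $\widetilde p$ is compact-open, and pulling back along $\overline{\hat\varphi}$ keeps it compact.

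The main obstacle, I expect, is pinning down the correct definition of the \emph{relation} $\hat\varphi$ and proving \textbf{local surjectivity}: unlike the \'etale-to-Boolean-set direction, where preimage under a function is forced on us, here we are choosing an ultrafilter $G$ of $Y$ extending the pushforward filter generated by $\varphi(H)$, and we must ensure such a $G$ can be found \emph{over the prescribed ultrafilter $F$ of $B_2$} and that it pulls back correctly to $H$. This requires carefully combining Proposition~\ref{prop: properties_of_Boolean_algebras}(5) (extending a filter disjoint from an ideal to an ultrafilter) with the fibrewise structure of Boolean sets from Section~3.2, and checking that the filter generated by $\varphi(H)$ is proper — which in turn uses that $\overline\varphi$ is a morphism and, for the Boolean-right-normal-band viewpoint, that the underlying Boolean-algebra map behaves well on ideals.
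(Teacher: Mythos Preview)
Your proposal has the right overall shape but the definition of $\hat\varphi$ is wrong, and this propagates. Your first guess $\hat\varphi(G) = \{H \in X^{\ast} : \varphi^{-1}(G) \subseteq H\}$ has the containment reversed: the paper shows that $\varphi^{-1}(G)$ is a \emph{disjoint union} of ultrafilters $[x]_{\overline\varphi^{-1}(F)}$ (one for each conjugacy class meeting $\varphi^{-1}(G)$), so asking $\varphi^{-1}(G) \subseteq H$ would force the relation to be at most single-valued always, contradicting Proposition~\ref{prop14}. Your ``cleaner'' reformulation, requiring $\varphi^{-1}(a) \in H$ for each $a \in G$, does not typecheck: $\varphi^{-1}(a)$ is a subset of $X$, not an element. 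The correct definition is $\hat\varphi(G) = \{H \in X^{\ast} : H \subseteq \varphi^{-1}(G)\}$, equivalently $\{H : \varphi(H) \subseteq G\}$; the first step of the proof is to verify that $\varphi^{-1}(G)$ decomposes as a union of ultrafilters, which is what makes this definition sensible.

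With the definition fixed, the step you flag as the main obstacle --- local surjectivity --- becomes a one-liner, and your machinery (pushforward filters, Proposition~\ref{prop: properties_of_Boolean_algebras}(5), extending disjoint from an ideal) is unnecessary. Given $H \in X^{\ast}$ over $\overline\varphi^{-1}(F)$, pick any $h \in H$ and set $G = [\varphi(h)]_{F}$; then $H \in \hat\varphi(G)$ because (BM2) gives $\varphi(h') \sim_{F} \varphi(h)$ for every $h' \in H$. Similarly, your treatment of continuity and properness is vaguer than it needs to be: the single identity $\hat\varphi^{-1}(L(a)) = L(\varphi(a))$ (immediate from the corrected definition) settles both at once, since $L(\varphi(a))$ is compact-open by Lemma~\ref{le: properties_of_topology}.
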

\begin{proof}
Let $G$ be an ultrafilter in $Y$.
By Proposition~\ref{prop: ultrafilters_in_Boolean_sets},
we can write $G = [y]_{F}$ where $y\in G$ and $F = q([y]_{F})$ is an ultrafilter in $B_2$. 
By Theorem~\ref{the: stone_duality}, 
we have that ${\overline{\varphi}}^{-1}(F)$ is an ultrafilter in $B_1$. 
By (BM1), $\varphi(x)\in [y]_{F}$ implies that $p(x)\in {\overline{\varphi}}^{-1}(F)$.
We show that 
\begin{equation}\label{eq:a}
\varphi^{-1}(G)=\bigcup_{x\in \varphi^{-1}(G)}[x]_{{\overline{\varphi}}^{-1}(F)}.
\end{equation}
To do this, we show that 
$x\in  \varphi^{-1}(G)$ implies that $[x]_{{\overline{\varphi}}^{-1}(F)}\subseteq \varphi^{-1}(G)$.
Let $t\in [x]_{{\overline{\varphi}}^{-1}(F)}$. 
Then there is $z$ such that $z\leq t,x$ and
$p(z)\in {\overline{\varphi}}^{-1}(F)$.
Since $z=x|^{p(x)}_{p(z)}$,
then 
$\varphi(z)=\varphi(x)|^{\overline{\varphi}(p(x))}_{\overline{\varphi}(p(z))}$
using (BM2).
From $\overline{\varphi}(p(x)), \overline{\varphi}(p(z))\in F$ and
$\varphi(x)\in [y]_{F}$ we conclude that $\varphi(z)\in [y]_{F}$, 
and so $z\in \varphi^{-1}([y]_{F})$. 
Now $t\geq z$ implies $\varphi(t)\geq \varphi(z)$.
It follows that $\varphi(t)\in [y]_{F}$ 
because $\varphi(z)\in [y]_{F}$ and $[y]_{\sim_F}$ is upwardly closed. 
Therefore $t\in \varphi^{-1}([y]_{F})$, and \eqref{eq:a} is established.
We can now construct $\hat{\varphi}$. Let $F\in Y^{\ast}$. 
We define $\hat{\varphi}(F)$ to be the set of all 
ultrafilters $G$ in $X$ such that $G\subseteq \varphi^{-1}(F)$. 
We put $\overline{\hat{\varphi}}=\overline{\varphi}^{-1}$. 
It follows that $\overline{\hat{\varphi}}$ is a relational morphism.

$\hat{\varphi}$ is locally injective.
Suppose that $F_{1}$ and $F_{2}$ are two ultrafilters of $Y$ such that
$q(F_{1}) = q(F_{2})$ and $\hat{\varphi}(F_{1}) \cap \hat{\varphi}(F_{2}) \neq \emptyset$.
Then $F_{1} \cap F_{2} \neq \emptyset$ and so by Proposition~\ref{prop: ultrafilters_in_Boolean_sets} 
$F_{1} = F_{2}$.

$\hat{\varphi}$ is locally surjective.
Let $H \in X^{\ast}$ be such that $p(H) = \bar{\varphi}^{-1}(F)$
where $F$ is an ultrafilter in $B_{2}$.
Let $h \in H$.
Then $\varphi (h) \in Y$.
Observe that $q(\varphi (h)) = \bar{\varphi}(p(h))$.
Thus $G = [\varphi (h)]_{F}$ is an ultrafilter in $Y$ and
$H \in \hat{\varphi} (G)$. 

Finally, to show that $\hat{\varphi}$ is proper and continuous, 
it is enough to show that $\hat{\varphi}^{-1}$ takes compact-open local sections 
to compact-open local sections. 
But this is the case, since compact-open local sections of $X^{\ast}$ are of the form $L(a)$, $a\in X$, 
and it is easy to see that $\hat{\varphi}^{-1}(L(a))=L(\varphi(a))$, 
the latter being a compact-open local section. 
\end{proof}

\begin{proposition}\mbox{}
\begin{enumerate}

\item The assignment that takes the \'etal\'e space $p \colon E \rightarrow X$
to the Boolean set $\widetilde{p} \colon E^{\ast} \rightarrow X^{\ast}$ of compact-open local sections
and a proper continuous relational covering morphism $\varphi$ to the morphism of Boolean sets $\hat{\varphi}$ is a contravariant functor.

\item The assignment that takes a Boolean set $p \colon X \rightarrow B$ to the \'etal\'e space  
$\widetilde{p} \colon X^{\ast} \rightarrow B^{\ast}$ of ultrafilters and a morphism $\varphi$ of
Boolean sets to a proper continuous relational covering morphism $\hat{\varphi}$ is a contravariant functor.

\end{enumerate}
\end{proposition}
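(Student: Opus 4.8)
The statement to prove is that the two assignments described — from étalé spaces to Boolean sets and back — are contravariant functors. The core constructions on objects (Propositions~\ref{prop: p1}, \ref{prop:p2}) and on morphisms (Lemmas~\ref{lem:l5}, \ref{lem:l6}) have already been established, so what remains is purely to check functoriality: that each assignment sends identity morphisms to identity morphisms, and that it reverses composition. I would organize the proof as two symmetric halves, one for each part.

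For part (1), I would first observe that the identity covering morphism on $(E,p,X)$ — namely $e \mapsto \{e\}$ with $\overline{\varphi} = \mathrm{id}_X$ — has $\hat{\varphi}(x) = \varphi^{-1}(x) = x$ for every compact-open local section $x \in E^{\ast}$, so $\hat{\varphi}$ is the identity morphism of $E^{\ast}$; the underlying Boolean algebra map is $\mathrm{id}_{X^{\ast}}$ since $\overline{\varphi}^{-1} = \mathrm{id}$. For composition, given $\varphi \colon (E,p,X) \to (F,q,Y)$ and $\psi \colon (F,q,Y) \to (G,r,Z)$, I would need to show $\widehat{\psi \circ \varphi} = \hat{\varphi} \circ \hat{\psi}$ as morphisms of Boolean sets from $G^{\ast}$ to $E^{\ast}$. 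The point is that for a compact-open local section $x$ over $Z$, one has $(\psi \circ \varphi)^{-1}(x) = \varphi^{-1}(\psi^{-1}(x))$; the only genuine thing to verify is that the relational composite $\psi \circ \varphi$ — which on power sets is defined by $(\psi \circ \varphi)(e) = \bigcup_{f \in \varphi(e)} \psi(f)$ — really does have $(\psi \circ \varphi)^{-1}(A) = \varphi^{-1}(\psi^{-1}(A))$ for subsets $A$ of $G$. This is a direct unwinding of definitions: $e \in \varphi^{-1}(\psi^{-1}(A))$ means $\varphi(e) \cap \psi^{-1}(A) \neq \emptyset$, i.e.\ there is $f$ with $f \in \varphi(e)$ and $\psi(f) \cap A \neq \emptyset$, which says exactly that $(\psi \circ \varphi)(e) \cap A \neq \emptyset$. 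Matching the underlying Boolean algebra maps reduces to the classical fact (Stone duality, Theorem~\ref{the: stone_duality}) that $(\overline{\psi} \circ \overline{\varphi})^{-1} = \overline{\varphi}^{-1} \circ \overline{\psi}^{-1}$ on compact-open sets.

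For part (2), the identity morphism of a Boolean set $X \stackrel{p}{\twoheadrightarrow} B$ is $\mathrm{id}_X$ with $\overline{\varphi} = \mathrm{id}_B$; from the recipe in Lemma~\ref{lem:l6}, $\hat{\varphi}(F)$ is the set of ultrafilters $G$ of $X$ with $G \subseteq \varphi^{-1}(F) = F$, which by Lemma~\ref{lem:l3} and Proposition~\ref{prop: ultrafilters_in_Boolean_sets} is exactly $\{F\}$, so $\hat{\varphi}$ is the identity partial map (and $\overline{\hat{\varphi}} = \mathrm{id}_{B^{\ast}} $ since $\overline{\varphi}^{-1} = \mathrm{id}$). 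For composition, given morphisms of Boolean sets $\varphi \colon X \to Y$ and $\psi \colon Y \to Z$, I would check $\widehat{\psi \circ \varphi} = \hat{\varphi} \circ \hat{\psi}$ as relational covering morphisms from $Z^{\ast}$ to $X^{\ast}$. Here the cleanest route is to use the description of the maps on compact-open local sections rather than on ultrafilters directly: by Lemma~\ref{le: properties_of_topology}(10), every compact-open local section of $X^{\ast}$ is of the form $L(a)$, and the proof of Lemma~\ref{lem:l6} shows $\hat{\varphi}^{-1}(L(a)) = L(\varphi(a))$; hence $(\hat{\varphi} \circ \hat{\psi})^{-1}(L(a)) = \hat{\psi}^{-1}(\hat{\varphi}^{-1}(L(a))) = \hat{\psi}^{-1}(L(\varphi(a))) = L(\psi(\varphi(a))) = \widehat{\psi\circ\varphi}^{-1}(L(a))$. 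Since a continuous relational covering morphism between these étalé spaces is determined by its action of pulling back compact-open local sections (two such morphisms agreeing there agree everywhere, as the $L(a)$ form a base and the morphisms are locally injective and locally surjective over the underlying Stone map), this forces $\widehat{\psi\circ\varphi} = \hat{\varphi}\circ\hat{\psi}$; the underlying base maps again match by classical Stone duality.

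The main obstacle, and the step deserving the most care, is verifying that composition of relational morphisms behaves correctly with respect to inverse images — concretely, the identity $(\psi \circ \varphi)^{-1}(A) = \varphi^{-1}(\psi^{-1}(A))$ in part (1), and in part (2) the claim that a continuous relational covering morphism is pinned down by how it pulls back compact-open local sections. The first is a short set-theoretic check but must be done with the correct definition of relational composite. The second requires a small lemma: if $\alpha, \beta \colon (E,p,X) \to (F,q,Y)$ are continuous relational covering morphisms with the same underlying base map and $\alpha^{-1}(L(b)) = \beta^{-1}(L(b))$ for all $b$, then $\alpha = \beta$; this follows because, for $e \in E$ and $f \in \alpha(e)$, any basic neighbourhood $L(b) \ni f$ pulls back to a set containing $e$, whence $f \in \beta(e)$ by local injectivity of $\beta$ over the shared base map, and symmetrically. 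Everything else is routine bookkeeping with the already-proven lemmas.
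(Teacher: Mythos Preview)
Your proposal is correct and carries out precisely the routine verification that the paper itself omits: the paper's own proof says only that ``the only thing that needs verification is functoriality of these assignments. This is straightforward to show and is left to the reader.'' Your plan fills in exactly those details --- identities and reversal of composition on each side --- using the natural relational-composite convention and the formula $\hat{\varphi}^{-1}(L(a)) = L(\varphi(a))$ from Lemma~\ref{lem:l6}, together with the observation that two ultrafilters in the same fibre sharing an element must coincide (Proposition~\ref{prop: ultrafilters_in_Boolean_sets}, equivalently that each $L(a)$ is a local section), which is what makes your determination lemma go through.
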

\begin{proof} The only thing that needs verification is functoriality of these assignments. This is straightforward to show and is left to the reader. 
\end{proof}

\subsection{Proof of the duality theorem}

\begin{proposition}\label{prop11} 
Let $X = (X,p,B)$ be a Boolean set. 
Then the map $\alpha \colon X \rightarrow X^{\ast\ast}$ given by $a \mapsto L(a)$ is an isomorphism of Boolean sets.
\end{proposition}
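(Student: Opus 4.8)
The goal is to show that $\alpha \colon X \to X^{\ast\ast}$, $a \mapsto L(a)$, is an isomorphism of Boolean sets. By the definition of a morphism of Boolean sets, $\alpha$ comes paired with a map $\overline{\alpha}$ on the base Boolean algebras; here $\overline{\alpha} \colon B \to B^{\ast\ast}$ should be the classical Stone isomorphism $c \mapsto M(c)$, which is an isomorphism of Boolean algebras by Theorem~\ref{the: stone_duality}. So the work splits into three parts: (i) check $\alpha$ together with $\overline{\alpha}$ is a morphism of Boolean sets, i.e. verify (BM1) and (BM2); (ii) check $\alpha$ is a bijection; (iii) check the inverse is also a morphism of Boolean sets (so that $\alpha$ is an isomorphism and not merely a bijective morphism).

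\textbf{Carrying it out.} For (i), (BM1) is exactly the identity $\widetilde{p}(L(a)) = M(p(a))$, which is Lemma~\ref{le: properties_of_topology}(8). For (BM2) I need $\alpha(a|^{p(a)}_{c}) = \alpha(a)|^{M(p(a))}_{M(c)}$ for $c \leq p(a)$; unwinding the restriction in the Boolean set $X^{\ast\ast}$ (restriction of a compact-open local section is intersection with $\widetilde{p}^{-1}$ of the smaller set, as in Section~\ref{subs11}), this says $L(a|^{p(a)}_{c}) = L(a) \cap \widetilde{p}^{-1}(M(c))$, and both sides are easily identified using $a|^{p(a)}_{c} \leq a$, Lemma~\ref{le: properties_of_topology}(4), and the description of ultrafilters of $X$ in Proposition~\ref{prop: ultrafilters_in_Boolean_sets}. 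For (ii), injectivity of $\alpha$ is precisely Lemma~\ref{le: properties_of_topology}(3). Surjectivity: an element of $X^{\ast\ast}$ is a compact-open local section of $\widetilde{p} \colon X^{\ast} \to B^{\ast}$, and by Lemma~\ref{le: properties_of_topology}(10) every such section is of the form $L(a)$ for some $a \in X$; hence $\alpha$ is onto.

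\textbf{The inverse is a morphism.} Since $\overline{\alpha}$ is already an isomorphism of Boolean algebras (Stone), it remains to see that $\alpha^{-1}$ satisfies (BM2), i.e. that $\alpha$ \emph{reflects} restrictions. But (BM2) for $\alpha$ is the equation $L(a|^{p(a)}_{c}) = L(a)|^{M(p(a))}_{M(c)}$ established in part (i), and since $\alpha$ and $\overline{\alpha}$ are bijections with $\overline{\alpha}^{-1} = $ (inverse Stone map), reading this equation backwards gives (BM2) for $\alpha^{-1}$ directly; there is nothing further to check. So $\alpha$ is an isomorphism of Boolean sets.

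\textbf{Main obstacle.} None of the individual steps is deep: the content has been front-loaded into Lemma~\ref{le: properties_of_topology} (especially parts (3), (8), (10)) and the structure theory of ultrafilters in Proposition~\ref{prop: ultrafilters_in_Boolean_sets}. The one place to be slightly careful is (BM2): one must correctly interpret the restriction maps in the \emph{dual} Boolean set $X^{\ast\ast}$ as intersection with $\widetilde{p}^{-1}$ of a compact-open subset of $B^{\ast}$, and match this against $L(a|^{p(a)}_{c})$ by a short argument comparing local sections over the same base set, using that $p$ (here $\widetilde{p}$) reflects the partial order and that a local section is determined by its image together with one point in each stalk (Corollary~\ref{cor: equality} at the level of $X^{\ast}$, or directly Lemma~\ref{le: properties_of_topology}(5)). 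This bookkeeping is the only thing that is not completely mechanical.
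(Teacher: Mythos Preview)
Your proposal is correct and follows essentially the same approach as the paper: both arguments reduce bijectivity to Lemma~\ref{le: properties_of_topology} (parts (3) and (10)), verify (BM1) via part (8), and verify (BM2) by unwinding the restriction in $X^{\ast\ast}$ as an intersection with $\widetilde{p}^{-1}(M(c))$. You are slightly more careful than the paper in explicitly checking that the inverse is a morphism, which the paper leaves implicit; this is a harmless addition rather than a different route.
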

\begin{proof} The fact that $\alpha$ is a bijection follows by Lemma~\ref{le: properties_of_topology}.
It only remains to show that $\alpha$ is an morphism of Boolean sets and $\overline{\alpha}$ is given via
$a\mapsto M(a)$, $a\in B$. Axiom (BM1) holds because $M(p(x))=\widetilde{p}(L(x))$ for each $x\in X$. 
For Axiom (BM2) let $x\in X$. Put $a=p(x)$ and let $b\leq a$. We are to check that $L(x)|^{M(a)}_{M(b)}=L(x|^a_b)$. 
But this equality holds, because $x\in F$ and $b\in p(x)$ is equivalent to saying that $x|^a_b\in F$. The proof is complete.
\end{proof}

Let $(E,f,X)$ and $(F,g,Y)$ be \'{e}tal\'e spaces. They are called {\em isomorphic}, 
provided that there exist homeomorphisms $\varphi\colon E\to F$ and $\psi\colon X\to Y$, such that $g \varphi=\psi f$.

\begin{proposition}\label{prop12} Let $E=(E,p,X)$ be an \'{e}tal\'e space. Then $E^{\ast\ast}$ is isomorphic to $E$ via the map
$\beta: a\mapsto K_a=\{x\in E^{\ast}\colon a\in x\}$, $a\in E$.
\end{proposition}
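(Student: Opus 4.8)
The plan is to construct the pair of homeomorphisms $(\beta,\overline{\beta})$ explicitly, verify compatibility with the projections, and then check that $\beta$ is a homeomorphism by exhibiting it as a bijection that is both open and continuous. First I would set up the base-space map: by Stone duality (Theorem~\ref{the: stone_duality}) applied to $X^{\ast}$, the map $\overline{\beta}\colon X\to X^{\ast\ast}$ sending $x\in X$ to $N(x)=\{A\in X^{\ast}\colon x\in A\}$, the set of compact-open subsets of $X$ containing $x$, is a homeomorphism. On the level of total spaces, $\beta\colon E\to E^{\ast\ast}$ sends $a\in E$ to $K_a=\{x\in E^{\ast}\colon a\in x\}$, the set of compact-open local sections through $a$. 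The compatibility condition $\widetilde{\widetilde{p}}\,\beta=\overline{\beta}\,p$ should be immediate: a compact-open local section $x$ through $a$ lies over a compact-open set $p(x)$ containing $p(a)$, and conversely, so the stalk of $K_a$ in $E^{\ast\ast}$ is exactly $N(p(a))$.

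Next I would check that $\beta$ is a bijection. Injectivity uses Lemma~\ref{le: l2}(3): given distinct $a,b\in E$, if $p(a)\neq p(b)$ separate them by a compact-open set in $X$ and lift; if $p(a)=p(b)$ then $a,b$ lie in the same stalk, and since $p$ is a local homeomorphism there is a compact-open local section through $a$ not containing $b$ (shrink a section through $a$ away from $b$ using Hausdorffness of $E$ on the stalk — actually one only needs that a small section about $a$ misses $b$, which holds because $a\ne b$ and sections are determined by their value on each fibre). Hence $K_a\ne K_b$. Surjectivity: an element of $E^{\ast\ast}$ is an ultrafilter $\mathcal{U}$ of compact-open local sections of $E$; using Lemma~\ref{le: l2}(3) and compactness one shows $\bigcap\mathcal{U}$ meets a single point $a\in E$, whence $\mathcal{U}=K_a$. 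This mirrors the classical argument that an ultrafilter of compact-opens in a Boolean space converges to a unique point, transported along $p$ fibrewise.

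Then I would verify that $\beta$ is a local homeomorphism compatibly with the local homeomorphism structure, which together with bijectivity gives that $\beta$ is a homeomorphism. Concretely, for a compact-open local section $x\in E^{\ast}$ the set $K(x)=\{K_a\colon a\in x\}$ is a basic open local section of $E^{\ast\ast}$ over $M(\widetilde{p}(x))$, and $\beta$ restricts to a bijection $x\to K(x)$ lying over the homeomorphism $\overline{\beta}\colon p(x)\xrightarrow{\sim} M(\widetilde{p}(x))$ supplied by Stone duality; since $p|_x$ and the projection of $K(x)$ to its base are both homeomorphisms, $\beta|_x$ is a homeomorphism. As $x$ ranges over $E^{\ast}$ these sets cover $E$ (Lemma~\ref{le: l2}(3) guarantees enough local sections), so $\beta$ is a homeomorphism. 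Finally $g\varphi=\psi f$ in the definition of isomorphism is exactly the compatibility square established in the first step.

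The main obstacle I expect is the surjectivity of $\beta$ — showing every ultrafilter of compact-open local sections is of the form $K_a$. One must produce the point $a$: take the image ultrafilter $\widetilde{p}(\mathcal{U})$ in $X^{\ast}$, which by Stone duality converges to a unique $x_0\in X$; then restrict the sections in $\mathcal{U}$ over smaller and smaller compact-open neighbourhoods of $x_0$ and use that $p$ is a local homeomorphism together with compactness of the relevant sections to pin down a single stalk element $a\in p^{-1}(x_0)$, after which $\mathcal{U}\subseteq K_a$ and maximality forces equality. Care is needed because $E$ need not be Hausdorff in part~(1), so ``the'' limit point in a fibre must be extracted from the ultrafilter data rather than from separation; the locally injective/section structure is what makes this work.
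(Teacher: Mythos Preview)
Your outline is essentially correct and parallels the paper's proof, but the paper organises the argument around the structure theorem for ultrafilters in a Boolean set (Proposition~\ref{prop: ultrafilters_in_Boolean_sets} and Lemma~\ref{lem:l3}) rather than a direct convergence/intersection argument. Concretely, the paper first checks well-definedness by showing $K_a=[x]_{N_{p(a)}}$ for any section $x$ through $a$, which immediately makes $K_a$ an ultrafilter; you skip this step, and while it is not hard, it is not automatic from the compatibility square alone. For surjectivity the paper again invokes Lemma~\ref{lem:l3}: any ultrafilter $G$ of $E^{\ast}$ has the form $[x]_{N_{c}}$ for some $c\in X$ with $c\in\widetilde{p}(x)$, and then $G=K_{x(c)}$; your ``take $\bigcap\mathcal{U}$ and pin down a point'' is the same idea unpacked topologically (the unique base point $x_0$ is exactly the $c$ above, and the common value of all sections at $x_0$ is $x(c)$). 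Both routes work; the paper's is shorter because the structural lemma has already been proved, while yours is more self-contained but requires you to redo the filter analysis on the spot. Your remark about non-Hausdorffness of $E$ is a red herring here: what makes the fibre argument go through is simply that local sections are injective on fibres, and the stalks of an \'etal\'e space are discrete, so no separation axiom on $E$ is needed.
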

\begin{proof} We first verify that the map $\beta$ is well-defined, that is, 
that $K_a$ is an ultrafilter in $E^{\ast}$ for each $a\in E$. 
From Theorem~\ref{the: stone_duality}, we have that that $N_{p(a)}=\{y\in X^{\ast}\colon p(a)\in y\}$ is an ultrafilter in $X^{\ast}$. 
Now, applying the fact that $p$ is a local homeomorphism, it easily follows that $K_a=[x]_{{N_{p(a)}}}$ for any $x\in K_a$. 
In particular, $K_a$ is an ultrafilter of $E^{\ast}$.

$\beta$ is injective. 
Assume $a,b\in E$ and $a\neq b$. 
If $p(a)\neq p(b)$ then by Theorem~\ref{the: stone_duality} $N_{p(a)}\neq N_{p(b)}$ and so $K_a\neq K_b$ 
as $N_{p(a)}=\widetilde{p}(K_a)$ and $N_{p(b)}=\widetilde{p}(K_b)$. 
Assume now that $p(a) = p(b)$. Let $x\in K_a$. 
Since $x$ is a compact-open local section and $a\in x$ then $b\not\in x$. 
It follows that $x\not\in K_b$, and so $K_a\neq K_b$ in this case as well.

$\beta$ is surjective. 
Let $G$ be an ultrafilter in $E^*$. 
By Lemma \ref{lem:l3} and since any ultrafilter of $X^{\ast}$ is of the form $N_a$ for some $a\in X^{\ast}$ we can assume that 
$G=[x]_{{N_a}}$ for some $x\in E^{\ast}$ and $a\in X^{\ast}$ with $a\in \widetilde{p}(x)$. 
Let $y=x(a)$. 
Then $G=[x]_{{N_{p(y)}}}$ 
and therefore $G=K_{y}$ as is shown in the first paragraph of this proof.

Finally, it follows from Theorem~\ref{the: stone_duality} 
that the map $\overline{\beta}: X\to X^{\ast\ast}$ given by $a\mapsto N_a$ is a homeomorphism. 
It is straightforward to verify that both $\beta$ and $\beta^{-1}$ are continuous and that $\beta$ and $\overline{\beta}$ commute with projection maps. 
It follows that $E$ and $E^{\ast\ast}$ are isomorphic.
\end{proof}

The first part of Theorem~\ref{the: main_theorem2} now follows from Propositions \ref{prop11} and \ref{prop12}.
The second part of Theorem~\ref{the: main_theorem2} is a consequence of the first part of Theorem \ref{the: main_theorem2} 
and the following two statements.

\begin{proposition}\label{prop13} 
A Boolean set $X$ has binary meets if and only if its dual  \'{e}tal\'e space $X^{\ast}$ is Hausdorff. 
\end{proposition}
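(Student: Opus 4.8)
The plan is to prove both implications by translating the lattice-theoretic condition ``binary meets exist in $X$'' into the topological condition ``$X^{\ast}$ is Hausdorff'', using the basic open sets $L(a)$ as the bridge. Throughout I will use Lemma~\ref{le: properties_of_topology}, which tells us that the $L(a)$ form a base, that $L(a) = L(b) \Leftrightarrow a = b$, that $L(a) \cap L(b) = L(a \wedge b)$ when $a \wedge b$ exists, and crucially part (7): if $L(a) \cup L(b)$ is a local section then $a \sim b$.

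First I would prove the forward direction. Assume $X$ has binary meets. Take two distinct ultrafilters $G_1, G_2$ of $X$. If $\widetilde{p}(G_1) \neq \widetilde{p}(G_2)$ in $B^{\ast}$, then since $B^{\ast}$ is a Boolean space (hence Hausdorff) we can separate $\widetilde{p}(G_1)$ and $\widetilde{p}(G_2)$ by disjoint opens in $B^{\ast}$ and pull them back along the continuous map $\widetilde{p}$ to separate $G_1$ and $G_2$. So assume $\widetilde{p}(G_1) = \widetilde{p}(G_2) = F$. By Proposition~\ref{prop: ultrafilters_in_Boolean_sets} we then have $G_1 \cap G_2 = \emptyset$. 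Pick $a \in G_1$ and $b \in G_2$; then $G_1 \in L(a)$ and $G_2 \in L(b)$. Using that meets exist, form $a \wedge b$; by Lemma~\ref{le: properties_of_topology}(2), $L(a) \cap L(b) = L(a \wedge b)$, and I claim $G_1, G_2 \notin L(a \wedge b)$ — indeed if $G_i \in L(a \wedge b)$ then $a \wedge b \in G_i$, and $a \wedge b \leq a, b$ forces $a, b \in G_i$ by upward closure, contradicting $G_1 \cap G_2 = \emptyset$ once we note $a \in G_2$ or $b \in G_1$ would follow. Shrinking $L(a)$ to $L(a) \setminus L(a \wedge b)$ and $L(b)$ to $L(b) \setminus L(a \wedge b)$ then gives disjoint open neighbourhoods, so $X^{\ast}$ is Hausdorff. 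I should double-check that these set-differences are open; they are, since $L(a \wedge b)$ is compact by Lemma~\ref{le: properties_of_topology}(9) and $X^{\ast}$, being étalé over a Boolean space, is locally Hausdorff so compact sets inside a Hausdorff open piece are closed there — alternatively one argues directly that $L(a)\setminus L(a\wedge b)$ is a union of basic opens.

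For the converse, assume $X^{\ast}$ is Hausdorff and take $a, b \in X$; I must produce $a \wedge b$. The natural candidate is the element $c$ with $L(c) = L(a) \cap L(b)$, provided such a $c$ exists; by Lemma~\ref{le: properties_of_topology}(10) it suffices to show $L(a) \cap L(b)$ is a compact-open local section. It is open as an intersection of two opens; it is a local section because it is contained in the local section $L(a)$; and it is compact because it is a closed subset — here Hausdorffness is used, since in a Hausdorff space the compact set $L(a)$ is closed, so $L(a) \cap L(b)$ is a closed subset of the compact set $L(b)$, hence compact. So $L(a) \cap L(b) = L(c)$ for a unique $c \in X$. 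It then remains to check that $c$ really is the meet of $a$ and $b$ in the natural partial order of $X$: since $L(c) \subseteq L(a)$ and $L(c) \subseteq L(b)$, Lemma~\ref{le: properties_of_topology}(4) gives $c \leq a, b$; and if $d \leq a, b$ then $L(d) \subseteq L(a) \cap L(b) = L(c)$, so $d \leq c$ again by part (4). Hence $c = a \wedge b$ and $X$ has binary meets.

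I expect the main obstacle to be the compactness bookkeeping rather than anything conceptually deep: making sure that in the forward direction the sets $L(a) \setminus L(a \wedge b)$ are genuinely open (which needs the local-Hausdorff structure of the étalé space, or a direct basic-open argument), and in the converse direction invoking Hausdorffness at exactly the right point to conclude that $L(a) \cap L(b)$ is compact so that Lemma~\ref{le: properties_of_topology}(10) applies. A secondary point worth being careful about is the reduction in the forward direction when $\widetilde{p}(G_1) = \widetilde{p}(G_2)$: one must remember that equal images forces \emph{disjoint} ultrafilters, via Proposition~\ref{prop: ultrafilters_in_Boolean_sets}, which is what makes the shrinking argument work.
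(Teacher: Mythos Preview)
Your proposal is correct and follows essentially the same route as the paper's proof: in the forward direction you separate two ultrafilters over the same base point using $L(a)\setminus L(a\wedge b)$ and $L(b)\setminus L(a\wedge b)$, and in the converse you show $L(a)\cap L(b)$ is a compact-open local section via Hausdorffness and then invoke Lemma~\ref{le: properties_of_topology}(10) and (4). The only cosmetic difference is that the paper phrases the separating neighbourhoods as the relative complements $a\setminus(a\wedge b)$ and $b\setminus(a\wedge b)$ in the Boolean set $X^{\ast\ast}\cong X$, which immediately dispatches the openness worry you flag; your local-Hausdorff argument for the same point is also valid, since each $L(a)$ is homeomorphic to the Hausdorff set $M(p(a))$.
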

\begin{proof} 
Assume $X\stackrel{p}{\twoheadrightarrow} B$ has binary meets and let $x,y\in X^{\ast}$. 
Since the base space $B^{\ast}$ is Hausdorff it is clear that if $\widetilde{p}(x)\neq \widetilde{p}(y)$  
there are some neighborhoods of $x$ and $y$ separating them. So we can assume that $\widetilde{p}(x)= \widetilde{p}(y)$. 
Consider some $a,b\in X^{\ast\ast}$ such that $x\in a$ and $y\in b$. Restricting $a$ and $b$ to $\widetilde{p}(a)\wedge \widetilde{p}(b)$, 
if needed, we can assume that $\widetilde{p}(a)= \widetilde{p}(b)$. Since the Boolean set $X^{\ast\ast}$ has binary meets, 
we can consider $a\wedge b\in X^{\ast\ast}$. Then both $a\setminus (a\wedge b)$ and $b\setminus (a\wedge b)$ are in $X^{\ast\ast}$ 
and are disjoint neighborhoods of $x$ and $y$, respectively.

Assume  $X^{\ast}$ is Hausdorff. Let $a,b\in X^{\ast\ast}$. It is enough to show that the set-theoretic intersection $a\cap b$ also belongs to $X^{\ast\ast}$. 
By the construction of a dual Boolean set, both $a$ and $b$ are compact-open local sections. As $X^{\ast}$ is Hausdorff, then $a$ and $b$  are also closed. 
Then  $a\cap b$ is also a compact clopen local section. 
By Lemma~\ref{le: properties_of_topology} any compact-open local section in $X^{\ast}$ equals $L(c)$ 
for some $c\in X$. 
\end{proof}

\begin{proposition}\label{prop14} 
Let $X\stackrel{p}{\twoheadrightarrow} B_1$ and $Y\stackrel{q}{\twoheadrightarrow} B_2$ be Boolean sets with binary meets 
and  $\varphi \colon X\to Y$ be a morphism. 
Then $\varphi$ preserves binary meets if and only if the relational covering morphism $\hat{\varphi} \colon Y^{\ast}\to X^{\ast}$ is a partial map.
\end{proposition}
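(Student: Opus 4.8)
The plan is to characterize, for a morphism $\varphi \colon X \to Y$ of Boolean sets with binary meets, when the associated relational covering morphism $\hat{\varphi} \colon Y^{\ast} \to X^{\ast}$ is a partial map, i.e. when $|\hat{\varphi}(G)| \leq 1$ for every ultrafilter $G$ in $Y$. Recall from Lemma~\ref{lem:l6} that $\hat{\varphi}(G)$ is the set of all ultrafilters $H$ of $X$ with $H \subseteq \varphi^{-1}(G)$, and that by \eqref{eq:a} we have $\varphi^{-1}(G) = \bigcup_{x \in \varphi^{-1}(G)} [x]_{\overline{\varphi}^{-1}(F)}$ where $G = [y]_F$. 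So $\hat{\varphi}(G)$ is a partial map precisely when the fibre $\varphi^{-1}(G)$, which always lies inside $p^{-1}(\overline{\varphi}^{-1}(F))$, is a \emph{single} conjugacy class over $\overline{\varphi}^{-1}(F)$ (or is empty), rather than a union of several.

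First I would prove the forward direction: assume $\varphi$ preserves binary meets, and show each non-empty $\varphi^{-1}(G)$ is a single class $[x]_{\overline{\varphi}^{-1}(F)}$. Take $x, x' \in \varphi^{-1}(G)$; I want to produce $z \leq x, x'$ with $p(z) \in \overline{\varphi}^{-1}(F)$. The natural candidate is $z = x \wedge x'$, which exists since $X$ has binary meets. Then $\varphi(z) = \varphi(x) \wedge \varphi(x')$ by hypothesis, and since $\varphi(x), \varphi(x') \in G = [y]_F$ they are conjugate over $F$, so there is $w \leq \varphi(x), \varphi(x')$ with $w \in G$; by Corollary~\ref{cor: equality} (both $w$ and $\varphi(x) \wedge \varphi(x')$ lie below $\varphi(x)$ and have meet-projection $q(\varphi(x)) \wedge q(\varphi(x')) = \overline{\varphi}(p(x)) \wedge \overline{\varphi}(p(x'))$, using (BM1)), we get $\varphi(z) = w \in G$, hence $z \in \varphi^{-1}(G)$ with $z \leq x, x'$. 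This shows $x \sim_{\overline{\varphi}^{-1}(F)} x'$, so $\varphi^{-1}(G)$ is a single conjugacy class and $|\hat{\varphi}(G)| \leq 1$.

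For the converse, assume $\hat{\varphi}$ is a partial map and show $\varphi(x \wedge x') = \varphi(x) \wedge \varphi(x')$ for all $x, x' \in X$. One inequality is automatic: $\varphi$ is order-preserving (it is a morphism of Boolean sets, so by (BM2) it respects restrictions hence the order), so $\varphi(x \wedge x') \leq \varphi(x), \varphi(x')$, giving $\varphi(x \wedge x') \leq \varphi(x) \wedge \varphi(x')$. For the reverse, set $u = \varphi(x) \wedge \varphi(x')$ and $v = \varphi(x \wedge x')$; by (BM1), both have the same $q$-projection, namely $\overline{\varphi}(p(x)) \wedge \overline{\varphi}(p(x'))$, so by Corollary~\ref{cor: equality} it suffices to show $u \leq \varphi(x \wedge x')$, or equivalently (same projection again) that $u$ and $v$ lie in a common element below $\varphi(x)$ — really I just need $u \leq v$, and since $q(u) = q(v)$ and $u, v \leq \varphi(x)$, by Corollary~\ref{cor: equality} it is enough to find a common lower bound, hence enough to show $u \sim v$. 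Suppose not; then pick an ultrafilter $F$ of $B_2$ containing $q(u) = q(v)$ and such that no element strictly related forces them together — more carefully, I would pick an ultrafilter $G$ of $Y$ with $v \in G$ but $u \notin G$ (possible by Lemma~\ref{le: properties_of_topology}(3),(7)-style separation since $u \neq v$ would fail only if... ). The cleaner route: the elements $x|^{p(x)}_{p(x)\wedge p(x')}$ and $x'|^{p(x')}_{p(x)\wedge p(x')} = x \circ x'$ both lie in $\varphi^{-1}$ of some ultrafilter $G$ containing $\varphi(x) \wedge \varphi(x')$; if $\hat{\varphi}$ is a partial map these two elements are conjugate over $\overline{\varphi}^{-1}(\widetilde{p}(G))$, which forces their common lower bound to map into $G$, and unwinding this via Corollary~\ref{cor: equality} gives $u \le v$.

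The main obstacle I anticipate is the converse direction: choosing the right ultrafilter $G$ in $Y$ so that $\varphi^{-1}(G)$ simultaneously ``sees'' both $x \wedge x'$ (via $v$) and the formal meet $u = \varphi(x)\wedge\varphi(x')$, then using partiality of $\hat{\varphi}$ to collapse the two conjugacy classes and conclude $u = v$. This requires carefully invoking Proposition~\ref{prop: properties_of_Boolean_algebras}(6) to separate $q(u \setminus \text{(something)})$ from the appropriate filter if $u \not\le v$, and then tracing the consequence back through \eqref{eq:a} and Lemma~\ref{le: properties_of_topology}. The bookkeeping with projections and restrictions is routine once the ultrafilter is correctly specified, but pinning down that ultrafilter is the delicate point; everything else reduces to Corollary~\ref{cor: equality} applied to elements below a common bound with equal $p$-images.
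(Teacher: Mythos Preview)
Your forward direction is essentially correct and more direct than the paper's contrapositive argument via Hausdorffness of $X^{\ast}$. One small repair: you cannot conclude $\varphi(z)=w$ via Corollary~\ref{cor: equality}, since there is no reason the witness $w\in G$ with $w\leq\varphi(x),\varphi(x')$ satisfies $q(w)=q(\varphi(x))\wedge q(\varphi(x'))$. But you do not need this. From $w\leq\varphi(x),\varphi(x')$ you get $w\leq\varphi(x)\wedge\varphi(x')=\varphi(x\wedge x')$ by hypothesis, so $\varphi(x\wedge x')\in G$ because $G$ is upward closed, hence $z=x\wedge x'\in\varphi^{-1}(G)$ with $z\leq x,x'$, and $p(z)\in\overline{\varphi}^{-1}(F)$ by (BM1). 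That is all you need.

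Your converse has a genuine gap. The assertion ``by (BM1), both $u=\varphi(x)\wedge\varphi(x')$ and $v=\varphi(x\wedge x')$ have the same $q$-projection $\overline{\varphi}(p(x))\wedge\overline{\varphi}(p(x'))$'' is false in general. We have $q(v)=\overline{\varphi}(p(x\wedge x'))$, and $p(x\wedge x')$ need not equal $p(x)\wedge p(x')$ --- that identity holds only when $x\sim x'$. Likewise $q(u)$ need not equal $q(\varphi(x))\wedge q(\varphi(x'))$. So Corollary~\ref{cor: equality} does not apply, and neither your first attempt nor the ``cleaner route'' (which restricts to $p(x)\wedge p(x')$, again implicitly assuming compatibility) pins down the right ultrafilter. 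The paper argues the converse by contrapositive through the topology: if $\varphi(a\wedge b)<\varphi(a)\wedge\varphi(b)$ strictly, then by Lemma~\ref{le: properties_of_topology}(4) there is $G\in Y^{\ast}$ with $G\in L(\varphi(a))\cap L(\varphi(b))$ but $G\notin L(\varphi(a\wedge b))$; since $\hat{\varphi}^{-1}(L(c))=L(\varphi(c))$, the set $\hat{\varphi}(G)$ meets both $L(a)$ and $L(b)$ but not $L(a)\cap L(b)=L(a\wedge b)$, forcing $|\hat{\varphi}(G)|\geq 2$. This sidesteps the projection issue entirely.
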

\begin{proof}
For one direction, assume that $\hat{\varphi}$ is not a partial map and show that $\varphi$ does not preserve all binary meets. 
Let $a\in Y^{\ast}$ be such that $|\hat{\varphi}(a)|\geq 2$. Let $x,y\in \hat{\varphi}(a)$. 
Since $X^{\ast}$ is a Hausforff space then there are disjoint basic neighborhoods of $x$ and $y$. 
By Lemma~\ref{le: properties_of_topology},
we can assume that $x\in L(a)$, $y\in L(b)$ and $L(a)\cap L(b)=\emptyset$. 
Then $a\wedge b=0$ and hence $\varphi(a\wedge b)=0$. 
But $\varphi(a)\wedge \varphi(b)\neq 0$ as both sections $\hat{\varphi}^{-1}(L(a))$ and  $\hat{\varphi}^{-1}(L(b))$ 
go through $a$, and so, since we are in an \'{e}tal\'e space, there is $c\in Y^{\ast\ast}$ such that $\hat{\varphi}^{-1}(L(a)),\hat{\varphi}^{-1}(L(b))\geq c$.

For the other direction, observe that morphisms of Boolean sets preserve partial order, and so
$\varphi(a\wedge b)\leq \varphi(a)\wedge \varphi(b)$ for any $a,b\in X$. 
Assume that for some $a,b$ this inequality is strict. 
This means that there is $x\in Y^\ast$ such that 
$x\in \hat{\varphi}^{-1}(L(a))\wedge \hat{\varphi}^{-1}(L(b))$ and $x\not\in \hat{\varphi}^{-1}(L(a\wedge b))$. 
Then $\hat{\varphi}(x)$ has a non-empty intersection with each of $L(a)$ and $L(b)$, and has an empty intersection with $L(a)\cap L(b)$.  
It follows that $|\hat{\varphi}(x)|\geq 2$.
\end{proof}


\end{document}